\documentclass[a4paper,reqno,12pt]{amsart}


\usepackage[all, 2cell, knot,]{xy} \UseAllTwocells \SilentMatrices
\usepackage{xspace}
\usepackage{amsmath}
\usepackage{amstext}
\usepackage{amsfonts}
\usepackage[mathscr]{euscript}
\usepackage{amscd}
\usepackage{latexsym}
\usepackage{amssymb}
\usepackage{layout}
\usepackage{amsthm}
\usepackage{amsfonts}
\usepackage{amsxtra}
\usepackage{color}
\usepackage[usenames,dvipsnames,svgnames,table]{xcolor}
\usepackage{graphics}
\usepackage{bm}
\usepackage{tabulary}
\usepackage{fancyhdr}
\usepackage[bookmarks=true,hyperfootnotes=false]{hyperref}
\hypersetup{
			colorlinks=true,
			linkcolor=blue,
			anchorcolor=black,
			citecolor=green,
			urlcolor=black,
}
%

%
%

\theoremstyle{plain}
\newtheorem{theorem}{Theorem}[section]
\newtheorem{lemma}[theorem]{Lemma}
\newtheorem{proposition}[theorem]{Proposition}
\newtheorem{corollary}[theorem]{Corollary}
\theoremstyle{definition}
\newtheorem{definition}[theorem]{Definition}
\newtheorem{example}[theorem]{Example}

\newtheorem{notation}[theorem]{Notation}
\newtheorem{remark}[theorem]{Remark}




\newcommand{\clA}{\mathcal{A}}

\newcommand{\clC}{\mathcal{C}}
\newcommand{\clD}{\mathcal{D}}
\newcommand{\clE}{\mathcal{E}}

\newcommand{\clI}{\mathcal{I}}

\newcommand{\clU}{\mathcal{U}}

\newcommand{\za}{\alpha}
\newcommand{\zb}{\beta}
\newcommand{\zg}{\gamma}

\newcommand{\zD}{\Delta}

\newcommand{\zve}{\varepsilon}

\newcommand{\pt}{\partial}


\newcommand{\Id}{\operatorname{Id}}

\newcommand{\pr}{\operatorname{pr}}
\newcommand{\nequ}{\mbox{$n$-equivalence}}
\newcommand{\equ}[1]{\mbox{$(#1)$-equivalence}}
\newcommand{\bsim}{/\!\!\sim}

\newcommand{\nid}{\noindent}
\newcommand{\bk}{\bigskip}
\newcommand{\mk}{\medskip}
\newcommand{\sk}{\smallskip}
\newcommand{\ovl}[1]{\overline{#1}}
\newcommand{\ovll}[1]{\overset{=}{#1}}
\newcommand{\up}[1]{^{(#1)}}
\newcommand{\lo}[1]{_{(#1)}}
\newcommand{\rw}{\rightarrow}
\newcommand{\Rw}{\Rightarrow}
\newcommand{\lw}{\leftarrow}

\newcommand{\xrw}{\xrightarrow} 
\newcommand{\xlw}{\xleftarrow} 
\newcommand{\hxrw}[1]{\xymatrix{\ \ar@{^{(}->}^{#1}[r] & \ }}
\newcommand{\tiund}[1]{{\times}_{#1}\:}
\newcommand{\pro}[3]{#1\tiund{#2}\overset{#3}{\cdots}\tiund{#2}#1}
\newcommand{\tens}[2]{#1\,\tiund{#2}\,#1}
\newcommand{\uset}[2]{\underset{#1}{#2}}
\newcommand{\oset}[2]{\overset{#1}{#2}}

\newcommand{\mi}{\text{-}}
\newcommand{\nm}{(n-1)}

\newcommand{\cop}{\textstyle{\,\coprod\,}}
\newcommand{\seq}[3]{{#1}_{#2}...{#1}_{#3}}
\newcommand{\seqc}[3]{{#1}_{#2},...,{#1}_{#3}}

\newcommand{\dop}[1]{\Delta^{{#1}^{op}}}
\newcommand{\Dop}{\Delta^{op}}
\newcommand{\Dnop}{\Delta^{{n}^{op}}}
\newcommand{\Dmenop}{\Delta^{{n-1}^{op}}}
\newcommand{\cat}[1]{\mbox{$\mathsf{Cat^{#1}}$}}
\newcommand{\Cat}{\mbox{$\mathsf{Cat}\,$}}

\newcommand{\Gpd}{\mbox{$\mathsf{Gpd}$}}

\newcommand{\cathd}[1]{\mbox{$\mathsf{Cat_{hd}^{#1}}$}}

\newcommand{\catwg}[1]{\mbox{$\mathsf{Cat_{wg}^{#1}}$}}
\newcommand{\tawg}[1]{\mbox{$\mathsf{Ta_{wg}^{#1}}$}}
\newcommand{\lta}[1]{\mbox{$\mathsf{LTa_{wg}^{#1}}$}}

\newcommand{\lnta}[2]{\mbox{$\mathsf{L\lo{#1}Ta_{wg}^{#2}}$}}

\newcommand{\segpsc}[2]{\mbox{$\mathsf{SegPs}$}\funcat{#1}{#2}}
\newcommand{\Ps}{\mbox{$\mathsf{Ps}$}}
\newcommand{\psc}[2]{\mbox{$\mathsf{Ps}$}\funcat{#1}{#2}}
\newcommand{\PsTalg}{\mbox{\sf Ps-}T\mbox{\sf -alg}}
\newcommand{\muk}{\mu_k}
\newcommand{\hmu}[1]{\hat\mu_{#1}}
\newcommand{\hmuk}{\hat{\mu}_k}

\newcommand{\Nn}{N_{(n)}}
\newcommand{\N}[1]{N_{(#1)}}
\newcommand{\Nb}[1]{N_{(#1)}}
\newcommand{\Nu}[1]{N^{(#1)}}
\newcommand{\di}[1]{d^{(#1)}}
\newcommand{\dn}{d^{(n)}}

\newcommand{\p}[1]{p^{(#1)}}

\newcommand{\op}[1]{\bar{p}^{(#1)}}
\newcommand{\q}[1]{q^{(#1)}}

\newcommand{\qn}{q^{(n)}}

\newcommand{\zgu}[1]{\zg^{(#1)}}
\newcommand{\Tan}{\mbox{$\mathsf{Ta^{n}}$}} 
\newcommand{\ta}[1]{\mbox{$\mathsf{Ta^{#1}}$}} 
\newcommand{\Set}{\mbox{$\mathsf{Set}$}}
\newcommand{\St}{St\,}

\newcommand{\Tr}{Tr\,}

\newcommand{\uk}{\underline{k}}
\newcommand{\ur}{\underline{r}}
\newcommand{\us}{\underline{s}}

\newcommand{\funcat}[2]{[\Delta^{{#1}^{op}},#2]}

\newcommand{\Lb}[1]{\mbox{$\mathsf{L_{(#1)}}$}}
\newcommand{\nfol}{$n$-fold }

\setcounter{tocdepth}{2}


\begin{document}

\title [\tiny{Weakly globular Tamsamani $n$-categories and their...}]{Weakly globular Tamsamani $\bm{N}$-categories and their rigidification }

\author[\tiny{Simona Paoli}]{Simona Paoli}
 \address{Department of Mathematics, University of Leicester,
LE17RH, UK}
 \email{sp424@le.ac.uk}

\date{3 September 2016}

\keywords{$n$-fold category, pseudo-functors, weak $n$-category, multi-simplicial sets}

\subjclass[2010]{18Dxx}


\begin{abstract}
We introduce a new class of higher categorical structures called weakly globular Tamsamani $n$-categories. These generalize the Tamsamani-Simpson model of higher categories by using the new paradigm of weak globularity to weaken higher categorical structures. We prove this new structure is suitably equivalent to a simpler one previously introduced by the author, called  weakly globular $n$-fold categories.
\end{abstract}

\maketitle


\section{Introduction}\label{sec-intro}

In this paper we introduce a new higher categorical structure, called a weakly globular Tamsamani $n$-category and we prove its equivalence with a simpler higher categorical structure introduced by the author in \cite{Pa2}, called a weakly globular $n$-fold category.

Higher category theory witnessed an explosion of interest in recent years and found applications to disparate fields, such as homotopy theory \cite{Be2} \cite{L2}, algebraic geometry \cite{Simp}, mathematical physics \cite{Lu3}, logic and computer science \cite{HTT},\cite{Voe}.

In a category there are objects, arrows, identity arrows, and composition of arrows that are associative and unital. Higher categories generalize categories by admitting higher arrows (also called higher cells) and compositions between them. The behaviour of these compositions determine two main classes: strict and weak higher categories.

For each of these classes, there are higher categories which admits cells in every dimensions, and those that have cells in dimensions only up $0$ to $n$. The former have been studied extensively in relation to applications to homotopy theory, giving rise to several notions of $(\infty,n)$-category \cite{BeRe}, \cite{Be3}, \cite{Be2}, \cite{Bk1}, \cite{Lu3}, \cite{L2} as well as weak $\omega$-category \cite{Ve}.

In this work we concentrate on the second 'truncated' case, with cells in dimensions $0$ up to $n$. This is close to one of the original motivations for the development of higher categories, namely the modelling of the building blocks of spaces, the $n$-types, and it is of fundamental importance for higher category theory. It also leads to applications to homotopy theory in the search for a combinatorial description of the $k$-invariants of spaces and of simplicial categories.

In a strict higher category, compositions are associative and unital, and there is a simple way to describe strict $n$-categories via iterated enrichment. Although simple to define, strict $n$-categories are insufficient for many applications and the wider class of weak $n$-categories is needed.

In a weak $n$-category, higher cells compose in a way that is associative and unital only up to an invertible cell in the next dimension, and these associativity and unit isomorphisms are suitably compatible or coherent.

In low dimensions, the prototype of a weak $n$-category is the classical the notion of bicategory \cite{Ben} (when $n$=2) and tricategory \cite{GPS}(when $n$=3).
In dimension $n>3$, writing explicitly the coherence data for a weak $n$-category is too complex. Instead, different combinatorial models of weak $n$-category were developed, using a variety of techniques such as operads \cite{B}, \cite{Cheng2}, \cite{Lu2}, simplicial sets \cite{Simp}, \cite{Ta}, opetopes  \cite{Cheng1} and many others.

 A good notion of weak $n$-category needs to satisfy the homotopy hypothesis of giving an algebraic model of the building blocks of spaces (the $n$-types) when the cells in the structure are weakly invertible, that is in the weak $n$-groupoid case.

A very important model of higher categories was developed by Tamsamani and Simpson \cite{Simp}, \cite{Ta}. This model is based on the combinatorics of multi-simplicial sets. The latter are  combinatorial models of spaces and feature prominently in homotopy theory \cite{Jard}.

The idea of the Tamsamani-Simpson model stems from a well-known connection between categories and simplicial sets, via the nerve functor
 \begin{equation*}
    N:\Cat \rw \funcat{}{\Set}\;.
\end{equation*}
This functor is fully faithful and a characterization of its image can be given in terms of Segal maps. Given a simplicial set $X$ the Segal maps for each $k\geq 2$ are maps of sets
\begin{equation*}
  X_k \rw \pro{X_1}{X_0}{k}.
\end{equation*}

Then $X$ is the nerve of a category if and only if all the Segal maps are isomorphisms. Thus the Segal maps are natural candidates for the composition in a category, and the condition that they are isomorphisms determines the structure of a category.

Segal maps can be defined for any $n$-fold simplicial set $X\in \funcat{n}{\Set}$. Namely, viewing $X$ as a simplicial object in $(n-1)$-fold simplicial sets in $n$ different directions affords corresponding Segal maps. These are candidates for the compositions of the higher cells in the structures.

In the Tamsamani-Simpson model $\ta{n}$, the higher cells are encoded as discrete structures (that is, sets) in the respective dimensions. This is called the \emph{globularity condition}, since it gives rise to the globular shape of the higher cells in the structure.

If in addition to the globularity condition we require the Segal maps in all simplicial directions to be isomorphisms we obtain a strict $n$-category, described in multi-simplicial language.

The Tamsamani-Simpson model is a suitable weakening of this construction. Namely, the globularity condition is maintained, but the Segal maps are no longer isomorphisms but suitably defined higher categorical equivalences.

In this paper we introduce a new Segal-type model of higher categories based on multi-simplicial structures, called a weakly globular Tamsamani $n$-category and denoted by $\tawg{n}$. This model uses a new paradigm to weaken higher categorical structures, which is the notion of \emph{weak globularity}. This notion was introduced by the author in \cite{Pa2} with the category $\catwg{n}$ of weakly globular \nfol categories.

In the weakly globular approach to higher categories the globularity condition is replaced by weak globularity: the cells in each dimension ($0$ up to $n$) no longer form a set but have a categorical structure on their own which is equivalent in a suitable sense to a discrete one. More precisely, it is a homotopically discrete $n$-fold category in the sense defined by the author in
\cite{Pa1}.

The behaviour of the compositions is controlled by the induced Segal maps, which for $X\in\tawg{n}$ have the form for each $k\geq 2$\;
\begin{equation*}
  X_k\rw\pro{X_1}{X_0^d}{k}.
\end{equation*}

Here $X_0^d$ is the discretization of the homotopically discrete object $X_0$. In a weakly globular Tamsamani $n$-category the  induced Segal maps are required to be suitable higher categorical equivalences. The formal definition of $X$ is given by induction on dimension. The Segal maps are in general not isomorphisms.

Weakly globular $n$-fold categories satisfy the weak globularity condition and their induced Segal maps are higher categorical equivalences but, being $n$-fold categories, all their Segal maps are isomorphisms.

There are embeddings
\begin{equation*}
  \catwg{n}\subset\tawg{n} \qquad \text{and}\qquad \ta{n}\subset\tawg{n}\;.
\end{equation*}
Thus weakly globular Tamsamani $n$-categories are a generalization both of the Tamsamani-Simpson model and of weakly globular $n$-fold categories.

The main result of this paper, Theorem \ref{the-funct-Qn}, asserts the existence of a functor, called \emph{rigidification}
\begin{equation*}
  Q_n:\tawg{n}\rw\catwg{n}
\end{equation*}
such that for each $X\in\tawg{n}$ there is a suitable higher categorical equivalence $Q_n X\rw X$. This result means that $X$ can be approximated with the more rigid and therefore simpler structure $Q_n X$. In particular, this implies (see Corollary \ref{cor-the-funct-Qn}) that the two categories $\catwg{n}$ and $\tawg{n}$ are equivalent after localization with respect to the $n$-equivalences.

Rigidification of structures defined up to homotopy is an important topic in homotopy theory, where it is often tackled with model theoretic techniques \cite{Badz}. In the higher categorical setting, rigidification results have been proved  for $(\infty,1)$-categories \cite{Be2} but not for the more complex case of weak $n$-categories for general $n>0$. Our result is an important step in the study of rigidification in the general higher categorical setting, and paves the way for further developments and applications.

In a subsequent paper \cite{Pa4} we will show that the rigidification functor $Q_n$ leads to a suitable equivalence between $\catwg{n}$ and the Tamsamani $n$-categories $\ta{n}$, exhibiting $\catwg{n}$ as a new model of weak $n$-categories satisfying, in particular, the homotopy hypothesis. This model will lead to new connections between different combinatorics modelling higher structures, as well as to applications to homotopy theory.

The rigidification functor factors through the category
 \begin{equation*}
   \segpsc{n-1}{\Cat}
 \end{equation*}
 of Segalic pseudo-functors introduced by the author in \cite{Pa2} whose strictification gives weakly globular \nfol categories (see \cite{Pa2}). That is, $Q_n$ is the composite
\begin{equation*}
    Q_n:\tawg{n}\xrw{\ \;\;} \segpsc{n-1}{\Cat}\xrw{\St} \catwg{n}\;.
\end{equation*}
where $\St$ is the classical strictification of pseudo-functors, see \cite{Lack}, \cite{PW}.
The functor from $\tawg{n}$ to Segalic pseudo-functors further factorizes as
\begin{equation*}
    \tawg{n}\xrw{P_n} \lta{n} \xrw{\Tr_{n}} \segpsc{n-1}{\Cat}
\end{equation*}
where $\lta{n}$ is a full subcategory of $\tawg{n}$ from which it is easy to construct a pseudo-functor using a general categorical technique known as of 'transport of structure along an adjunction'. The functor $P_n$ produces a functorial approximation (up to $n$-equivalence) of an object of $\tawg{n}$ with an object of $\lta{n}$.

This paper is organized as follows. In Section \ref{sec-prelim} we recall some basic multi-simplicial techniques used throughout the paper, the classical theory of strictification of pseudo-functors as well as the technique of transport of structure to produce pseudo-functors.

 In Section \ref{sec-WG-nfol-cat} we recall the definition of the category $\catwg{n}$ of weakly globular \nfol categories and of Segalic pseudo-functors introduced in \cite{Pa2}.

In Section \ref{sec-WG-Tam-cat} we introduce the category $\tawg{n}$ of weakly globular Tamsamani $n$-categories and of $n$-equivalences and we prove its main properties. In particular we show in Proposition \ref{pro-crit-lev-nequiv} a criterion for an $n$-equivalence in $\tawg{n}$ to be a levelwise equivalence of categories. This is used crucially in Section \ref{sec-cat-lta} in the proofs of Lemma \ref{lem-jn-alpha} and Theorem \ref{the-repl-obj-1}.

In Section \ref{sec-func-qn} we continue the study of the category $\tawg{n}$ and we focus on the important functor
\begin{equation*}
    \q{n}:\tawg{n}\rw \tawg{n-1}\;.
\end{equation*}
This functor generalizes to higher dimensions the connected components functor $q:\Cat\rw\Set$. For each $X\in\tawg{n}$ there is a map $\zg\up{n}: X\rw \di{n}\q{n}X$ where $\di{n}$ is the inclusion of $\tawg{n-1}$ in $\tawg{n}$ as a discrete structure in the top dimension. We study the properties of pullbacks along this map playing a crucial role in Section \ref{sec-cat-lta}.

In Section \ref{sec-cat-lta} we introduce the subcategory $\lta{n}\subset \tawg{n}$ and we show in Theorem \ref{the-repl-obj-1} and Corollary \ref{cor-repl-obj-2} that every object of $\tawg{n}$ is $n$-equivalent to an object of $\lta{n}$.

In Section \ref{sec-wg-tam-to-psefun}, Theorem \ref{the-XXXX}  we prove the existence of a functor
\begin{equation*}
    Tr_n: \lta{n}\rw \segpsc{n-1}{\Cat}
\end{equation*}

We then use the functor $Tr_{n}$ and the results of Section \ref{sec-cat-lta} to build the rigidification functor $Q_n$.

\bk

\textbf{Acknowledgements}: This work is supported by a Marie Curie International Reintegration Grant No 256341. I thank the Centre for Australian Category Theory for their hospitality and financial support during August-December 2015, as well as the University of Leicester for its support during my study leave.  I also thank the University of Chicago for their hospitality and financial support during April 2016.


\section{Preliminaries}\label{sec-prelim}
In this section we review some basic simplicial techniques that we will use throughout the paper as well as some categorical background on pseudo-functors and their strictification, and on a technique to produce pseudo-functors. The material in this section is well-known, see for instance \cite{Borc}, \cite{Jard}, \cite{lk}, \cite{PW}, \cite{Lack}.
\subsection{Simplicial objects}\label{sbs-simp-tech}
Let $\zD$ be the simplicial category and let $\Dnop$ denote the product of $n$ copies of $\Dop$. Given a category $\clC$, $\funcat{n}{\clC}$ is called the category of $n$-simplicial objects in $\clC$ (simplicial objects in $\clC$ when $n=1$).
\begin{notation}\label{not-simp}
    If $X\in \funcat{n}{\clC}$ and $\uk=([k_1],\ldots,[k_n])\in \Dnop$, we shall denote $X ([k_1],\ldots,[k_n])$ by $X(k_1,\ldots,k_n)$, as well as $X_{k_1,\ldots,k_n}$ and $X_{\uk}$. We shall also denote $\uk(1,i)=([k_1],\ldots,[k_{i-1}],1,[k_{i+1}],\ldots,[k_n]) \in \Dnop$ for $1\leq i\leq n$.

    Every $n$-simplicial object in $\clC$ can be regarded as a simplicial object in $\funcat{n-1}{\clC}$ in $n$ possible ways. For each $1\leq i\leq n$ there is an isomorphism
    \begin{equation*}
        \xi_i:\funcat{n}{\clC}\rw\funcat{}{\funcat{n-1}{\clC}}
    \end{equation*}
    given by
    \begin{equation*}
        (\xi_i X)_{r}(k_1,\ldots,k_{n-1})=X(k_1,\ldots,k_{i-1},r,k_{i+1},\ldots,k_{n-1})
    \end{equation*}
    for $X\in\funcat{n}\clC$ and $r\in\Dop$.
\end{notation}
\begin{definition}\label{def-fun-smacat}
    Let $F:\clC \rw \clD$ be a functor, $\clI$ a small category. Denote
    \begin{equation*}
        \ovl{F}:[\clI,\clC]\rw [\clI,\clD]
    \end{equation*}
    the functor given by
    \begin{equation*}
        (\ovl{F} X)_i = F(X(i))
    \end{equation*}
    for all $i\in\clI$.
\end{definition}
\begin{definition}\label{def-seg-map}
    Let ${X\in\funcat{}{\clC}}$ be a simplicial object in any category $\clC$ with pullbacks. For each ${1\leq j\leq k}$ and $k\geq 2$, let ${\nu_j:X_k\rw X_1}$ be induced by the map  $[1]\rw[k]$ in $\Delta$ sending $0$ to ${j-1}$ and $1$ to $j$. Then the following diagram commutes:
\begin{equation}\label{eq-seg-map}
\xymatrix@C=20pt{
&&&& X\sb{k} \ar[llld]_{\nu\sb{1}} \ar[ld]_{\nu\sb{2}} \ar[rrd]^{\nu\sb{k}} &&&& \\
& X\sb{1} \ar[ld]_{d\sb{1}} \ar[rd]^{d\sb{0}} &&
X\sb{1} \ar[ld]_{d\sb{1}} \ar[rd]^{d\sb{0}} && \dotsc &
X\sb{1} \ar[ld]_{d\sb{1}} \ar[rd]^{d\sb{0}} & \\
X\sb{0} && X\sb{0} && X\sb{0} &\dotsc X\sb{0} && X\sb{0}
}
\end{equation}

If  ${\pro{X_1}{X_0}{k}}$ denotes the limit of the lower part of the
diagram \eqref{eq-seg-map}, the \emph{$k$-th Segal map for $X$} is the unique map
$$
\muk:X\sb{k}~\rw~\pro{X\sb{1}}{X\sb{0}}{k}
$$
\noindent such that ${\pr_j\,\muk=\nu\sb{j}}$ where
${\pr\sb{j}}$ is the $j^{th}$ projection.
\end{definition}
\begin{definition}\label{def-ind-seg-map}

    Let ${X\in\funcat{}{\clC}}$ and suppose that there is a map $\zg: X_0 \rw X^d_0$ in $\clC$  $\zg: X_0 \rw X^d_0$  such that the limit of the diagram
\begin{equation*}
\xymatrix@R25pt@C16pt{
& X\sb{1} \ar[ld]_{\zg d\sb{1}} \ar[rd]^{\zg d\sb{0}} &&
X\sb{1} \ar[ld]_{\zg d\sb{1}} \ar[rd]^{\zg d\sb{0}} &\cdots& k &\cdots&
X\sb{1} \ar[ld]_{\zg d\sb{1}} \ar[rd]^{\zg d\sb{0}} & \\
X^d\sb{0} && X^d\sb{0} && X^d\sb{0}\cdots &&\cdots X^d\sb{0} && X^d\sb{0}
    }
\end{equation*}
exists; denote the latter by $\pro{X_1}{X_0^d}{k}$. Then the following diagram commutes, where $\nu_j$ is as in Definition \ref{def-seg-map}, and $k\geq 2$
\begin{equation*}
\xymatrix@C=20pt{
&&&& X\sb{k} \ar[llld]_{\nu\sb{1}} \ar[ld]_{\nu\sb{2}} \ar[rrd]^{\nu\sb{k}} &&&& \\
& X\sb{1} \ar[ld]_{\zg d\sb{1}} \ar[rd]^{\zg d\sb{0}} &&
X\sb{1} \ar[ld]_{\zg d\sb{1}} \ar[rd]^{\zg d\sb{0}} && \dotsc &
X\sb{1} \ar[ld]_{\zg d\sb{1}} \ar[rd]^{\zg d\sb{0}} & \\
X^d\sb{0} && X^d\sb{0} && X^d\sb{0} &\dotsc X^d\sb{0} && X^d\sb{0}
}
\end{equation*}
The \emph{$k$-th induced Segal map for $X$} is the unique map
\begin{equation*}
\hmuk:X\sb{k}~\rw~\pro{X\sb{1}}{X^d\sb{0}}{k}
\end{equation*}
such that ${\pr_j\,\hmuk=\nu\sb{j}}$ where ${\pr\sb{j}}$ is the $j^{th}$ projection.
\end{definition}
\subsection{$\mathbf{n}$-Fold internal categories}\label{sbs-nint-cat}

Let  $\clC$ be a category with finite limits. An internal category $X$ in $\clC$ is a diagram in $\clC$
\begin{equation}\label{eq-nint-cat}
\xymatrix{
\tens{X_1}{X_0} \ar^(0.65){m}[r] & X_1 \ar^{d_0}[r]<2.5ex> \ar^{d_1}[r] & X_0
\ar^{s}[l]<2ex>
}
\end{equation}
where $m,d_0,d_1,s$ satisfy the usual axioms of a category (see for instance \cite{Borc} for details). An internal functor is a morphism of diagrams like \eqref{eq-nint-cat} commuting in the obvious way. We denote by $\Cat \clC$ the category of internal categories and internal functors.

The category $\cat{n}(\clC)$ of \nfol categories in $\clC$ is defined inductively by iterating $n$ times the internal category construction. That is, $\cat{1}(\clC)=\Cat$ and, for $n>1$,
\begin{equation*}
  \cat{n}(\clC)= \Cat(\cat{n-1}(\clC)).
\end{equation*}

When $\clC=\Set$, $\cat{n}(\Set)$ is simply denoted by $\cat{n}$ and called the category of \nfol categories (double categories when $n=2$).

\subsection{Nerve functors}\label{sus-ner-funct}

There is a nerve functor
\begin{equation*}
    N:\Cat\clC \rw \funcat{}{\clC}
\end{equation*}
such that, for $X\in\Cat\clC$
\begin{equation*}
    (N X)_k=
    \left\{
      \begin{array}{ll}
        X_0, & \hbox{$k=0$;} \\
        X_1, & \hbox{$k=1$;} \\
        \pro{X_1}{X_0}{k}, & \hbox{$k>1$.}
      \end{array}
    \right.
\end{equation*}
When no ambiguity arises, we shall sometimes denote $(NX)_k$ by $X_k$ for all $k\geq 0$.

The following fact is well known:
\begin{proposition}\label{pro-ner-int-cat}
    A simplicial object in $\clC$ is the nerve of an internal category in $\clC$ if and only if all the Segal maps are isomorphisms.
\end{proposition}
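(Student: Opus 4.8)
The plan is to verify both directions of the ``if and only if'' directly from the definitions of the nerve functor $N$ and of an internal category, using the universal property of the limits $\pro{X_1}{X_0}{k}$.

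First I would treat the easy direction. If $X = NY$ for some internal category $Y \in \Cat\clC$, then by the very definition of $N$ we have $X_0 = Y_0$, $X_1 = Y_1$, and $X_k = \pro{Y_1}{Y_0}{k}$ for $k \geq 2$; moreover the face maps $d_0, d_1 \colon X_1 \rw X_0$ are the source and target maps of $Y$. One then checks that under these identifications the maps $\nu_j \colon X_k \rw X_1$ of Definition \ref{def-seg-map} are precisely the projections onto the $j$-th factor of $\pro{Y_1}{Y_0}{k}$, since the map $[1]\rw[k]$ sending $0\mapsto j-1$, $1\mapsto j$ induces, simplicially, exactly that projection. Hence the $k$-th Segal map $\muk \colon X_k \rw \pro{X_1}{X_0}{k}$ is, by its defining property $\pr_j\,\muk = \nu_j$, the identity on $\pro{Y_1}{Y_0}{k}$, so all Segal maps are isomorphisms.

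For the converse, suppose $X \in \funcat{}{\clC}$ has all Segal maps isomorphisms. I would construct an internal category $Y$ with $Y_0 := X_0$, $Y_1 := X_1$, source and target $d_1, d_0 \colon X_1 \rw X_0$, identity $s \colon X_0 \rw X_1$ the degeneracy, and composition $m \colon \tens{X_1}{X_0} = \pro{X_1}{X_0}{2} \xrw{\muk[2]^{-1}} X_2 \xrw{d_1} X_1$, using the inverse of the (isomorphism) $2$-nd Segal map together with the middle face map $d_1 \colon X_2 \rw X_1$. The associativity and unit axioms for $Y$ are then extracted from the simplicial identities in $X_3$ and $X_2$: associativity follows by comparing the two ways of composing three arrows, both of which can be rewritten via the iso $\muk[3] \colon X_3 \rw \pro{X_1}{X_0}{3}$ and the face maps $X_3 \rw X_2$, while the unit axioms follow from the simplicial identities relating $s$ and the $d_i$. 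Finally one checks $N Y \cong X$: in degrees $0,1$ this is the identity by construction, and in degree $k \geq 2$ the Segal map $\muk \colon X_k \xrw{\ \sim\ } \pro{X_1}{X_0}{k} = (NY)_k$ provides the isomorphism, which one verifies is simplicial by naturality of the $\nu_j$ in the simplicial structure.

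The routine-but-fiddly part, and the only real obstacle, is the verification of the category axioms for $Y$ from the simplicial identities: one must carefully track how the face and degeneracy maps of $X_2$ and $X_3$ interact with the Segal isomorphisms $\muk[2]$ and $\muk[3]$, and confirm that the resulting $m$ is well defined on the pullback and satisfies associativity and unitality. This is entirely formal given that $\clC$ has finite limits, and is the classical identification underlying the nerve characterization; I would either carry it out diagram by diagram or simply cite the standard treatment (e.g.\ \cite{Borc}).
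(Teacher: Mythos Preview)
Your sketch is correct and is exactly the standard argument for this classical fact; the paper itself gives no proof, stating the proposition as ``well known'' and implicitly deferring to references such as \cite{Borc}. Your outline of both directions, including the extraction of composition as $d_1\circ\mu_2^{-1}$ and the derivation of associativity and unitality from the simplicial identities in $X_2$ and $X_3$, is precisely how this is done in the literature.
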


By iterating the nerve construction, we obtain the multinerve functor
\begin{equation*}
    \Nn :\cat{n}(\clC)\rw \funcat{n}{\clC}\;.
\end{equation*}
\begin{definition}\label{def-discrete-nfold}
An internal $n$-fold category $X\in \cat{n}(\clC)$ is said to be discrete if $\Nn X$ is a constant functor.
\end{definition}

 Each object of $\cat{n}(\clC)$ can be considered as an internal category in $\cat{n-1}(\clC)$ in $n$ possible ways, corresponding to the $n$ simplicial directions of its multinerve. To prove this, we use the following lemma, which is a straightforward consequence of the definitions.

\begin{lemma}\label{lem-multin-iff}\
\begin{itemize}
      \item [a)] $X\in\funcat{n}{\clC}$ is the multinerve of an \nfol category in $\clC$ if and only if, for all $1\leq r\leq n$ and $[p_1],\ldots,[p_r]\in\Dop$, $p_r\geq 2$
\begin{equation}\label{eq-multin-iff}
\begin{split}
    &  X(p_1,...,p_r,\mi)\cong\\
    &\resizebox{1.0\hsize}{!}{$\cong\pro{X(p_1,...,p_{r-1},1,\mi)}{X(p_1,...,p_{r-1},0,\mi)}{p_r}$}
\end{split}
\end{equation}
      \item [b)] Let $X\in\cat{n}(\clC)$. For each $1\leq k\leq n$, $[i]\in\Dop$, there is $X_i\up{k}\in\cat{n-1}(\clC)$ with
\begin{equation*}
    \Nb{n-1}X_i\up{k} (p_1,\ldots,p_{n-1})=\Nn X(p_1,\ldots,p_{k-1},i,p_k,\ldots,p_{n-1})
\end{equation*}
    \end{itemize}
\end{lemma}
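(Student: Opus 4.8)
The plan is to prove both parts by straightforward unwinding of definitions, using the characterization of nerves of internal categories from Proposition \ref{pro-ner-int-cat} together with the inductive definition of $\cat{n}(\clC)$ as $\Cat(\cat{n-1}(\clC))$ and the iterated construction of $\Nn$.

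For part a), I would argue by induction on $n$. For $n=1$ the statement is exactly Proposition \ref{pro-ner-int-cat}: the only relevant case is $r=1$, and the condition that $X(p_1,\mi)$ — i.e. $X_{p_1}$ — is the iterated fibre product of copies of $X_1$ over $X_0$ for all $p_1\geq 2$ is precisely the assertion that all Segal maps are isomorphisms. For the inductive step, I would use the isomorphism $\xi_1:\funcat{n}{\clC}\rw\funcat{}{\funcat{n-1}{\clC}}$ of Notation \ref{not-simp}, which views $X$ as a simplicial object in $\funcat{n-1}{\clC}$ in the first direction. Then $X$ is a multinerve of an \nfol category iff (i) applying Proposition \ref{pro-ner-int-cat} internally, the first-direction Segal maps are isomorphisms, so that $\xi_1 X$ is the nerve of an internal category object in $\funcat{n-1}{\clC}$ whose object-of-objects $X(0,\mi)$ and object-of-morphisms $X(1,\mi)$ lie in the image of $\Nb{n-1}$, and (ii) by the inductive hypothesis applied to $X(0,\mi)$, $X(1,\mi)$ and more generally to each $X(p_1,\mi)$, these $(n-1)$-simplicial objects are multinerves of $(n-1)$-fold categories, which is exactly the family of conditions \eqref{eq-multin-iff} for $2\leq r\leq n$ with the first coordinate $p_1$ fixed. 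Condition (i) is the $r=1$ case of \eqref{eq-multin-iff}. Collecting these, the conjunction of \eqref{eq-multin-iff} over all $1\leq r\leq n$ is equivalent to $X$ being the multinerve of an object of $\Cat(\cat{n-1}(\clC))=\cat{n}(\clC)$.

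For part b), given $X\in\cat{n}(\clC)$ and $1\leq k\leq n$, the point is that $X$ may be regarded as an internal category in $\cat{n-1}(\clC)$ along its $k$-th direction; this uses that the $n$ iterations of the internal-category construction commute, which is a formal consequence of the fact that $\Cat(-)$ preserves finite limits (so $\Cat(\Cat(\clD))\cong\Cat(\Cat(\clD))$ with the two internalization directions interchangeable), together with Lemma \ref{lem-multin-iff}a) applied to reorder coordinates. Concretely, under the isomorphism $\xi_k$ the multi-simplicial object $\Nn X$ becomes a simplicial object in $(n-1)$-fold simplicial sets all of whose Segal maps in the $k$-th direction are isomorphisms, hence is the nerve of an internal category in $\cat{n-1}(\clC)$; evaluating this internal category at $[i]\in\Dop$ in the $k$-th simplicial slot produces the desired $X_i\up{k}\in\cat{n-1}(\clC)$, and the formula for $\Nb{n-1}X_i\up{k}$ is then just the definition of $\xi_k$ together with the identification of $\Nb{n-1}$ of the value with the corresponding multi-simplicial slice of $\Nn X$.

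I do not expect any serious obstacle here — both parts are ``straightforward consequences of the definitions'' as the text already signals. The only mildly delicate point is the bookkeeping in part b) when $k>1$: one must be careful that transposing simplicial directions is compatible with the iterated nerve, i.e. that an \nfol category really is an internal category object in $\cat{n-1}(\clC)$ in each of the $n$ directions and not merely in the last. This is handled by repeated use of the symmetry of finite limits under $\Cat(-)$, or equivalently by invoking part a) to detect multinerves coordinate-symmetrically; I would state this compatibility explicitly and then the formula for $\Nb{n-1}X_i\up{k}$ falls out by unwinding. No nontrivial calculation is required beyond these identifications.
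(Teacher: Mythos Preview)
Your proposal is correct and follows essentially the same route as the paper: induction on $n$ for part a) via Proposition \ref{pro-ner-int-cat} and the first simplicial direction, and for part b) recognizing that fixing the $k$-th coordinate in the Segal isomorphisms from part a) yields exactly the multinerve criterion for an $(n-1)$-fold category. The paper's proof of b) goes directly through part a) without appealing to the abstract fact that $\Cat(-)$ preserves finite limits --- the coordinate-symmetric form of the criterion in a) already gives you the reordering you need --- so you can drop that detour and just evaluate \eqref{eq-multin-iff} at $p_k=i$.
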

\begin{proof}\

  a) By induction on $n$. By Proposition \ref{pro-ner-int-cat}, it is true for $n=1$. Suppose it holds for $n-1$ and let $X\in\Cat(\cat{n-1}(\clC))$ with objects of objects (resp. arrows) $X_0$ (resp. $X_1$); denote $(NX)_p=X_p$. By definition of the multinerve
      \begin{equation*}
        (\Nb{n} X)(p_1,\ldots,p_r,\mi)=\Nb{n-1}X_{p_1}(p_2,\ldots,p_r,\mi)\;.
      \end{equation*}
      Hence using the induction hypothesis
\begin{align*}
       &\Nb{n}X(p_1...p_r\,\mi)=\Nb{n-1}X_{p_1}(p_2... p_r\,\mi)\cong\\
&\resizebox{1.0\hsize}{!}{$
\cong \pro{\Nb{n-1} X_{p_1}(p_2... p_{r-1}\,1\,\mi)}
         {\Nb{n-1} X_{p_1}(p_2... p_{r-1}\,0\,\mi)}{p_r}=$}\\
&\resizebox{1.0\hsize}{!}{
          $=\pro{\Nb{n} X(p_1... p_{r-1}\,1\,\mi)}
         {\Nb{n} X(p_1... p_{r-1}\,0\,\mi)}{p_r}.$}
\end{align*}
Conversely, suppose $X\in\funcat{n}{\clC}$ satisfies \eqref{eq-multin-iff}. Then for each $[p]\in\Dop$, $X(p,\mi)$ satisfies \eqref{eq-multin-iff}, hence
\begin{equation*}
    X(p,\mi)=\Nb{n-1}X_p
\end{equation*}
for $X_p\in\cat{n-1}(\clC)$. Also, by induction hypothesis
\begin{equation*}
    X(p,\mi)=\pro{X(1,\mi)}{X(0,\mi)}{p}\;.
\end{equation*}
Thus we have the object $X\in\cat{n}(\clC)$ with objects $X_0$, arrows $X_1$ and $X_p=X(p,\mi)$ as above.

\bigskip
b) By part a), there is an isomorphism for $p_r\geq 2$
\newpage
\begin{flalign*}
       &\Nb{n}X(p_1...p_n)=&
\end{flalign*}
\begin{equation*}
\resizebox{1.0\hsize}{!}{$\pro{\Nb{n}X(p_1...p_{r-1}\, 1 ...p_n)}{\Nb{n}X(p_1...p_{r-1}\, 0 ...p_n)}{p_r}$}\;.
\end{equation*}
In particular, evaluating this at $p_k=i$, this is saying the $(n-1)$-simplicial group taking $(p_1...p_n)$ to $\Nb{n}X(p_1...p_{k-1}\, i ...p_{n-1})$ satisfies condition \eqref{eq-multin-iff} in part a). Hence by part a) there exists $X_i\up{k}$ with
\begin{equation*}
    \Nb{n-1}X_i\up{k}(p_1...p_{n-1})=\Nb{n}X(p_1...p_{k-1}\, i ...p_{n-1})
\end{equation*}
as required.
\end{proof}
\begin{proposition}\label{pro-assoc-iso}
    For each $1\leq k\leq n$ there is an isomorphism $\xi_k:\cat{n}(\clC)\rw \cat{n}(\clC)$ which associates to $X=\cat{n}(\clC)$ an object $\xi_k X$ of $\Cat(\cat{n-1}(\clC))$ with
    \begin{equation*}
        (\xi_k X)_i=X_i\up{k}\qquad i=0,1
    \end{equation*}
    with $X_i\up{k}$ as in Lemma \ref{lem-multin-iff}.
\end{proposition}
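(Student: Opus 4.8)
The plan is to present $\cat{n}(\clC)$ as a full subcategory of $\funcat{n}{\clC}$ via the multinerve $\Nn$ and to obtain $\xi_k$ by restricting an evident reindexing automorphism of $\funcat{n}{\clC}$. First I would record two standard facts about the internal nerve $N:\Cat\clC\rw\funcat{}{\clC}$: it is fully faithful, and it preserves finite limits (its value in each degree is an iterated pullback, while finite limits in $\Cat\clC$ are computed on objects-of-objects and objects-of-arrows, so the two limit computations agree up to canonical isomorphism). Iterating, $\Nn:\cat{n}(\clC)\rw\funcat{n}{\clC}$ is fully faithful and preserves finite limits, and by Lemma \ref{lem-multin-iff}(a) its essential image is exactly the full subcategory of those $Y\in\funcat{n}{\clC}$ satisfying \eqref{eq-multin-iff}; unwinding the quantifiers, this amounts to asking that for every simplicial direction $1\le j\le n$, and for every choice of the remaining coordinates, the Segal maps of $Y$ in direction $j$ be isomorphisms. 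This characterization of the image is manifestly invariant under permuting the $n$ simplicial directions.

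Next, for a fixed $k$ I would take $\xi_k$ to be the automorphism of $\funcat{n}{\clC}$ given by the isomorphism of Notation \ref{not-simp} followed by the canonical identification $\funcat{}{\funcat{n-1}{\clC}}\cong\funcat{n}{\clC}$; concretely this is the coordinate permutation moving the $k$-th simplicial direction to the front, and $\xi_1=\Id$. By the permutation-invariance of the image just noted, $\xi_k$ carries the full subcategory of multinerves isomorphically onto itself, hence restricts to an automorphism $\xi_k:\cat{n}(\clC)\rw\cat{n}(\clC)$. To identify it with an internal category in $\cat{n-1}(\clC)$ of the stated form, fix $X\in\cat{n}(\clC)$ and consider the simplicial object $r\mapsto(\xi_k\Nn X)_r$ in $\funcat{n-1}{\clC}$: by Lemma \ref{lem-multin-iff}(b) it equals $r\mapsto\Nb{n-1}X_r\up{k}$, and since $\Nn X$ satisfies \eqref{eq-multin-iff} in direction $k$ this simplicial object satisfies the Segal condition. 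Because $\Nb{n-1}$ preserves the relevant finite limits and is conservative (being fully faithful), the Segal maps of $r\mapsto X_r\up{k}$ in $\cat{n-1}(\clC)$ are sent by $\Nb{n-1}$ to these isomorphisms and hence are themselves isomorphisms; by Proposition \ref{pro-ner-int-cat} applied with $\clC$ replaced by $\cat{n-1}(\clC)$, the simplicial object $r\mapsto X_r\up{k}$ is the nerve of an internal category in $\cat{n-1}(\clC)$, with object-of-objects $X_0\up{k}$ and object-of-arrows $X_1\up{k}$. Under the defining identification $\cat{n}(\clC)=\Cat(\cat{n-1}(\clC))$ this internal category is $\xi_k X$. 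Functoriality is then immediate: $\Nn$ and $\Nb{n-1}$ are full and faithful, so a morphism $X\rw Y$ in $\cat{n}(\clC)$ induces a simplicial map $\xi_k\Nn X\rw\xi_k\Nn Y$ all of whose components lie in $\cat{n-1}(\clC)$, i.e. an internal functor $\xi_k X\rw\xi_k Y$.

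The only genuine content — and the step I would be most careful with — is the compatibility of limits: that the iterated pullbacks occurring in the Segal maps are computed the same way in $\cat{n-1}(\clC)$ as in $\funcat{n-1}{\clC}$, equivalently that $\Nb{n-1}$ preserves (and, being fully faithful, reflects) them. This is exactly what makes the Segal condition transport across $\Nb{n-1}$; by contrast the bookkeeping of which simplicial direction goes where, and the permutation-invariance of the multinerve condition, is routine once the picture above is set up. An alternative, more hands-on route would avoid the multinerve altogether and build $\xi_k$ as a composite of adjacent-direction transposes $\Cat(\Cat\clD)\cong\Cat(\Cat\clD)$ (the classical transpose of a category internal to $\Cat\clD$, applied with $\clD=\cat{n-2}(\clC)$, etc.), checking the effect on objects and arrows by induction on $k$; this replaces the limit-compatibility remark by a slightly longer induction but is otherwise elementary.
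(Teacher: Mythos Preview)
Your proof is correct and follows essentially the same route as the paper: both arguments embed $\cat{n}(\clC)$ in $\funcat{n}{\clC}$ via the multinerve, invoke Lemma~\ref{lem-multin-iff} to identify the levels $X_i\up{k}$, and read off the internal-category structure from the Segal condition in direction $k$. The one packaging difference is that the paper builds an explicit inverse $\xi'_k$ by hand, whereas you obtain invertibility for free by noting that $\xi_k$ is the restriction of a coordinate-permutation automorphism of $\funcat{n}{\clC}$ to a permutation-invariant full subcategory; your route is slightly cleaner, and your explicit remark that $\Nb{n-1}$ preserves and reflects the relevant pullbacks fills in a step the paper leaves implicit.
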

\begin{proof}
Consider the object of $\funcat{}{\funcat{n-1}{\clC}}$ taking $i$ to the $\nm$-simplicial object associating to $(\seqc{p}{1}{n-1})$ the object
\begin{equation*}
    \Nb{n}X(p_1...p_{k-1}\, i \, p_{k+1}...p_{n-1})\;.
\end{equation*}
By Lemma \ref{lem-multin-iff} b), the latter is the multinerve of $X_i\up{k}\in \cat{n-1}(\clC)$. Further, by Lemma \ref{lem-multin-iff} a), we have
\begin{equation*}
    \Nb{n-1}X_i\up{k}\cong \pro{\Nb{n-1}X_1\up{k}}{\Nb{n-1}X_0\up{k}}{i}\;.
\end{equation*}
Hence $\Nb{n}X$ as a simplicial object in $\funcat{n-1}{\clC}$ along the $k^{th}$ direction, has
\begin{equation*}
    (\Nb{n}X)_i=
    \left\{
      \begin{array}{ll}
        \Nb{n-1}X_i\up{k}, & \hbox{$i=0,1$;} \\
        \Nb{n-1}(\pro{X_1\up{k}}{X_0\up{k}}{i}), & \hbox{for $i\geq 2$.}
      \end{array}
    \right.
\end{equation*}
This defines $\xi_k X\in\Cat(\cat{n-1}(\clC))$ with
\begin{equation*}
    (\xi_k X)_i=\Nb{n-1}X_i\up{k}\qquad i=0,1\;.
\end{equation*}
We now define the inverse for $\xi_k$. Let $X\in\Cat(\cat{n-1}(\clC))$, and let $X_i=\pro{X_1}{X_0}{i}$ for $i\geq 2$. The $n$-simplicial object $X_{k}$ taking $(p_1,\ldots,p_n)$ to
\begin{equation*}
    \Nb{n}X_{p_k}(p_1...p_{k-1}p_{k+1}...p_n)
\end{equation*}
satisfies condition \eqref{eq-multin-iff}, as easily seen. Hence by Lemma \ref{lem-multin-iff} there is $\xi'_k X\in\cat{n}\clC$ such that $\Nb{n }\xi'_k X=X_k$. It is immediate to check that $\xi_k$ and $\xi'_k$ are inverse bijections.
\end{proof}
\begin{definition}\label{def-ner-func-dirk}
    The nerve functor in the $k^{th}$ direction is defined as the composite
    \begin{equation*}
        \Nu{k}:\cat{n}(\clC)\xrw{\xi_k}\Cat(\cat{n-1}(\clC))\xrw{N}\funcat{}{\cat{n-1}(\clC)}
    \end{equation*}
    so that, in the above notation,
    \begin{equation*}
        (\Nu{k}X)_i=X\up{k}_i\qquad i=0,1\;.
    \end{equation*}
    Note that $\Nb{n}=\Nu{n}...\Nu{2}\Nu{1}$.
\end{definition}
\begin{notation}\label{not-ner-func-dirk}
    When $\clC=\Set$ we shall denote
    \begin{equation*}
        J_n=\Nu{n-1}\ldots \Nu{1}:\cat{n}\rw\funcat{n-1}{\Cat}\;.
    \end{equation*}
\end{notation}
 Thus $J_n$ amounts to taking the nerve construction in all but the last simplicial direction. The functor $J_n$ is fully faithful, thus we can identify $\cat{n}$ with the image $J_n(\cat{n})$ of the functor $J_n$.

 Given $X\in\cat{n}$, when no ambiguity arises we shall denote, for each $(s_1,\ldots ,s_{n-1})\in\Dmenop$
\begin{equation*}
    X_{s_1,\ldots ,s_{n-1}}=(J_n X)_{s_1,\ldots ,s_{n-1}}\in\Cat
\end{equation*}
and more generally, if $1\leq j \leq n-1$,
\begin{equation*}
    X_{s_1,\ldots ,s_{j}}=(\Nu{j}\ldots \Nu{1} X)_{s_1,\ldots ,s_{j}}\in\cat{n-j}\;.
\end{equation*}
Let $ob : \Cat \clC \rw \clC$ be the object of object functor. The left adjoint to $ob$ is the discrete internal category functor $d$. By Proposition \ref{pro-assoc-iso}  we then have
\begin{equation*}
\xymatrix{
    \cat{n}\clC \oset{\xi_n}{\cong}\Cat(\cat{n-1}\clC) \ar@<1ex>[r]^(0.65){ob} & \cat{n-1}\clC \ar@<1ex>[l]^(0.35){d}\;.
}
\end{equation*}

We denote
\begin{equation*}
\di{n}=\xi^{-1}_{n}\circ d \text{\;\;for\;\;} n>1,\; \di{1}=d \;.
\end{equation*}
Thus $\di{n}$ is the discrete inclusion of $\cat{n-1}\clC$ into $\cat{n}\clC$ in the $n^{th}$ direction.

\bk

The following is a characterization of objects of $\funcat{n-1}{\Cat}$ in the image of the functor $J_n$ in \ref{not-ner-func-dirk}.
\begin{lemma}\label{lem-char-obj}
     Let $L\in \funcat{n-1}{\Cat}$ be such that, for all $\uk\in\dop{n-1}$, $1\leq i\leq n-1$ and $k_i\geq 2$, the Segal maps are isomorphisms:
     \begin{equation}\label{eq-lem-char-obj}
        L_{\uk} \cong\pro{L_{\uk(1,i)}}{L_{\uk(0,i)}}{k_i}\;.
     \end{equation}
     Then $L\in\cat{n}$.
\end{lemma}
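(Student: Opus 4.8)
The plan is to prove this by induction on $n$, peeling off one simplicial direction at a time and invoking Lemma \ref{lem-multin-iff} to recognize when an $(n-1)$-simplicial object underlies an $(n-1)$-fold category. The base case $n=1$ is immediate: a simplicial object in $\Cat$ (that is, in $\Set$, since $\cat{0}=\Set$) whose Segal maps are all isomorphisms is the nerve of a category by Proposition \ref{pro-ner-int-cat}. So assume the statement holds in dimension $n-1$.

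\textbf{The inductive step.} Given $L\in\funcat{n-1}{\Cat}$ satisfying \eqref{eq-lem-char-obj}, first view $L$ as a simplicial object in $\funcat{n-2}{\Cat}$ along the first simplicial direction, writing $L_p$ for its value at $[p]\in\Dop$. For each fixed $p$, the object $L_p\in\funcat{n-2}{\Cat}$ still satisfies the Segal-map isomorphism condition \eqref{eq-lem-char-obj} in its remaining $n-2$ directions (these are a subset of the conditions imposed on $L$, read off at the fixed index $p$ in the first slot). Hence by the inductive hypothesis $L_p\in\cat{n-1}$ for every $p$. Next, the first-direction Segal condition on $L$ says precisely that $L_p\cong\pro{L_1}{L_0}{p}$ for $p\geq 2$, and these isomorphisms are compatible with the simplicial structure in the first direction; since $\cat{n-1}$ has finite limits and the multinerve $\Nb{n-1}$ preserves and detects them, this pullback decomposition of the $L_p$ exhibits $L$, regarded via $\xi_1^{-1}$, as an internal category object in $\cat{n-1}$, i.e. an object of $\Cat(\cat{n-1})=\cat{n}$. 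Concretely one checks that the $n$-simplicial set $\uk\mapsto L_{\uk}$ satisfies condition \eqref{eq-multin-iff} of Lemma \ref{lem-multin-iff}(a) for all $r$: for $r=1$ it is the first-direction Segal condition just used, and for $r\geq 2$ it follows by applying Lemma \ref{lem-multin-iff}(a) to each $L_p\in\cat{n-1}$ together with the fact that $\Nb{n-1}L_p$ is the restriction of $\Nb{n-1}$-of-$L$ (here identifying $\Cat$ with its image under the nerve) to the first index $p$. Lemma \ref{lem-multin-iff}(a) then yields the desired object of $\cat{n}$.

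\textbf{Main obstacle.} The routine part is the induction bookkeeping; the one point requiring care is the bridge between the hypothesis, which is phrased in terms of $L$ as a functor landing in $\Cat$ (so the ``$n$-th direction'' data is packaged inside the category values rather than as an extra simplicial coordinate), and Lemma \ref{lem-multin-iff}, which is phrased for $n$-simplicial objects in $\clC=\Set$. Bridging this means replacing each category $L_{\uk}$ by its nerve $N L_{\uk}$ to obtain a genuine object of $\funcat{n}{\Set}$, checking that the Segal-isomorphism hypotheses on $L$ plus the fact that each $NL_{\uk}$ is the nerve of a category give exactly condition \eqref{eq-multin-iff} in all $n$ directions, and then observing that the $\cat{n}$ produced by Lemma \ref{lem-multin-iff} maps under $J_n=\Nu{n-1}\cdots\Nu{1}$ back to the original $L$ (using that $J_n$ is fully faithful, so this identification is unambiguous). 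I expect no genuine difficulty beyond this translation, since all the needed closure-under-limits and nerve-detection facts are already recorded in Proposition \ref{pro-ner-int-cat}, Lemma \ref{lem-multin-iff}, and Proposition \ref{pro-assoc-iso}.
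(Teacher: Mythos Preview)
Your inductive approach is essentially the same as the paper's: fix the first simplicial index to get each $L_j\in\cat{n-1}$ by the inductive hypothesis, then use the $i=1$ Segal isomorphisms and Proposition~\ref{pro-ner-int-cat} to recognise $L$ as an internal category in $\cat{n-1}$. Two small points: the paper starts the induction at $n=2$ (for $n=1$ the object $L\in\funcat{0}{\Cat}=\Cat$ is not a simplicial object and the hypothesis is vacuous, so your description of that case is off), and your ``main obstacle'' about translating through nerves into $\funcat{n}{\Set}$ via Lemma~\ref{lem-multin-iff} is unnecessary---since Proposition~\ref{pro-ner-int-cat} is stated for internal categories in an arbitrary $\clC$ with pullbacks, it applies directly with $\clC=\cat{n-1}$, which is exactly how the paper concludes.
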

\begin{proof}
By induction on $n$. When $n=2$, $L\in\funcat{}{\Cat}$, $k\in \Dop$, $i=1$, $\uk(1,i)=1$, $\uk(0,i)=0$, $k_i=k=2$ and
\begin{equation*}
    L_k\cong\pro{L_1}{L_0}{k}\;.
\end{equation*}
Thus by Proposition \ref{pro-ner-int-cat}, $L\in\cat{2}$.

Suppose the lemma holds for $(n-1)$ and let $L \in \funcat{n-1}{\Cat}$ be as in the hypothesis. Consider $L_j\in\funcat{n-2}{\Cat}$ for $j\geq 0$. Let $\ur\in\dop{n-2}$ and denote $\uk=(j,\ur)\in\dop{n-1}$. Then, for any $2\leq i\leq n-1$, $k_i=r_{i-1}$ and
\begin{equation*}
    L_{\uk}=(L_j)_{\ur},\quad L_{\uk(1,i)}=(L_j)_{\ur(1,i-1)},\quad L_{\uk(0,i)}=(L_j)_{\ur(0,i-1)}\;.
\end{equation*}
Therefore \eqref{eq-lem-char-obj} implies
\begin{equation*}
    (L_j)_{\ur}=\pro{(L_j)_{\ur(1,i-1)}}{(L_j)_{\ur(0,i-1)}}{r_{i-1}}\;.
\end{equation*}
This means that $L_j$ satisfies the inductive hypothesis and therefore $L_j\in\cat{n-1}$.
Taking $i=1$ in \eqref{eq-lem-char-obj} we see that, for each $k_1\geq 2$ and $\ur=(k_2,\ldots,k_{n-1})\in\dop{n-2}$,
\begin{equation*}
    (L_{k_1})_{\ur}=\pro{(L_1)_{\ur}}{(L_0)_{\ur}}{k_1}\;.
\end{equation*}
That is, we have isomorphisms in $\cat{n-1}$
\begin{equation*}
    L_{k_1}\cong \pro{L_1}{L_0}{k_1}\;.
\end{equation*}
We conclude from Proposition \ref{pro-ner-int-cat} that $L\in\cat{n}$.

\end{proof}
\begin{lemma}\label{lem-char-obj-II}
    Let $P$ be the pullback in $\funcat{n-1}{\Cat}$ of the diagram in $A\rw C \lw B$. Suppose that for each $k\geq 2$ there are isomorphisms of Segal maps in $\funcat{n-2}{\Cat}$
    \begin{equation*}
        A_k\cong \pro{A_1}{A_0}{k},\;\;  C_k\cong \pro{C_1}{C_0}{k},\;\;  B_k\cong \pro{B_1}{B_0}{k}\;.
    \end{equation*}
    Then $P_k\cong \pro{P_1}{P_0}{k}$.
\end{lemma}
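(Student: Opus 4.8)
The plan is to exploit that limits commute with limits. Since $P$ is a pullback in $\funcat{n-1}{\Cat}$ and pullbacks in a functor category are computed pointwise in each multi-index, for every $\uk\in\dop{n-2}$ and $m\geq 0$ we have $P_m=A_m\times_{C_m}B_m$, where the pullback is taken in $\funcat{n-2}{\Cat}$. In particular $P_0=A_0\times_{C_0}B_0$ and $P_1=A_1\times_{C_1}B_1$.

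First I would write out the target object $\pro{P_1}{P_0}{k}$ as an iterated limit: it is the limit of a diagram whose vertices are $k$ copies of $P_1$ and $k-1$ copies of $P_0$ (linked by the face maps $d_0,d_1$ as in Definition \ref{def-seg-map}). Substituting $P_1=A_1\times_{C_1}B_1$ and $P_0=A_0\times_{C_0}B_0$ and using that each vertex is itself a limit, the whole thing becomes a limit over a larger indexing diagram. By the interchange of limits (Fubini for limits), this limit may be reorganized: group together all the $A$-vertices, all the $B$-vertices, and all the $C$-vertices. This reorganization identifies $\pro{P_1}{P_0}{k}$ with
\begin{equation*}
    \left(\pro{A_1}{A_0}{k}\right)\times_{\left(\pro{C_1}{C_0}{k}\right)}\left(\pro{B_1}{B_0}{k}\right),
\end{equation*}
the pullback being taken in $\funcat{n-2}{\Cat}$. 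This step is purely formal and uses no hypothesis on the Segal maps; it only uses that all the objects involved lie in categories with the relevant limits, which $\funcat{n-2}{\Cat}$ has since $\Cat$ does.

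Next I would invoke the hypothesis. By assumption the Segal maps $A_k\to\pro{A_1}{A_0}{k}$, $B_k\to\pro{B_1}{B_0}{k}$ and $C_k\to\pro{C_1}{C_0}{k}$ are isomorphisms in $\funcat{n-2}{\Cat}$. Since pullbacks preserve isomorphisms (an isomorphism of cospans induces an isomorphism of pullbacks), the induced map
\begin{equation*}
    A_k\times_{C_k}B_k\longrightarrow \left(\pro{A_1}{A_0}{k}\right)\times_{\left(\pro{C_1}{C_0}{k}\right)}\left(\pro{B_1}{B_0}{k}\right)
\end{equation*}
is an isomorphism. Composing with the isomorphism from the previous paragraph and recalling $P_k=A_k\times_{C_k}B_k$, we obtain $P_k\cong\pro{P_1}{P_0}{k}$, and a routine check (tracking the $\nu_j$ maps of Definition \ref{def-seg-map}) shows this isomorphism is precisely the Segal map, as required.

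The only delicate point is the bookkeeping in the Fubini step: one must be careful that the face maps $d_0,d_1$ from $P_1$ to $P_0$ are, componentwise, exactly the face maps of $A$, $B$ and $C$ — this holds because the pullback projections $P\to A$, $P\to B$ are simplicial maps, so they commute with $d_0,d_1$, which is what lets the reorganized diagram split cleanly into an $A$-part, a $B$-part and a $C$-part glued along a $C$-part. I expect this indexing argument, rather than anything about Segal maps per se, to be the part most in need of care, though it is entirely routine once set up. No induction on $n$ is needed here: the argument is uniform, with $\funcat{n-2}{\Cat}$ playing the role of the ambient finitely complete category.
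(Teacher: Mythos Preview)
Your proof is correct and takes essentially the same approach as the paper: both arguments amount to the interchange-of-limits (Fubini) identification
\[
\pro{P_1}{P_0}{k}\;\cong\;\bigl(\pro{A_1}{A_0}{k}\bigr)\tiund{\,\pro{C_1}{C_0}{k}\,}\bigl(\pro{B_1}{B_0}{k}\bigr),
\]
combined with the Segal hypotheses on $A$, $B$, $C$. The only cosmetic difference is that the paper carries this out pointwise in $\Set$ (applying the nerve to land in $\funcat{n-1}{\Set}$ and writing the rearrangement explicitly as the swap $(x_1,x_2,x_3,x_4)\mapsto(x_1,x_3,x_2,x_4)$ for $k=2$), whereas you invoke the abstract Fubini principle directly in $\funcat{n-2}{\Cat}$; the content is identical.
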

\begin{proof}
We show this for $k=2$, the case $k>2$ being similar. Since the nerve functor $N:\Cat\rw\funcat{}{\Set}$ commutes with pullbacks (as it is right adjoint) and pullbacks in $\funcat{n-1}{\Cat}$ are computed pointwise, for each $\us\in\dop{n-1}$ we have a pullback in $\Set$
\begin{equation*}
    \xymatrix{
    (NP)_{2\us} \ar[r] \ar[d] & (NA)_{2\us} \ar[d]\\
    (NC)_{2\us} \ar[r] & (NC)_{2\us}
    }
\end{equation*}
where
\begin{equation*}
    (NA)_{2\us}=\tens{(NA)_{1\us}}{(NA)_{0\us}}
\end{equation*}
and similarly for $NC$ and $NB$. We then calculate
\begin{align*}
    &(NP)_{2\us}=(NA)_{2\us} \tiund{(NC)_{2\us}} (NB)_{2\us} =\\
    & \resizebox{1.0\hsize}{!}{$
    =\{ \tens{(NA)_{1\us}}{(NA)_{0\us}} \}\tiund{\tens{(NC)_{1\us}}{(NC)_{0\us}}} \{ \tens{(NB)_{1\us}}{(NB)_{0\us}}\}\cong $}\\
    & \resizebox{1.0\hsize}{!}{$
    \cong\{ \tens{(NB)_{1\us}}{(NC)_{0\us}} \}\tiund{\tens{(NA)_{1\us}}{(NC)_{0\us}}} \{ \tens{(NB)_{1\us}}{(NC)_{0\us}}\}= $}\\
    &= \tens{(NP)_{1\us}}{(NP)_{0\us}}\;.
\end{align*}
In the above, the isomorphism before the last takes $(x_1,x_2,x_3,x_4)$ to $(x_1,x_3,x_2,x_4)$. Since this holds for all $\us$, it follows that
\begin{equation*}
    P_2\cong \tens{P_1}{P_0}\;.
\end{equation*}
The case $k>2$ is similar.

\end{proof}

\subsection{Some functors on $ \Cat$}\label{sbs-funct-cat}

The connected component functor
\begin{equation*}
    q: \Cat\rw \Set
\end{equation*}
associates to a category its set of paths components. This is left adjoint to the discrete category functor
\begin{equation*}
    \di{1}:\Set \rw \Cat
\end{equation*}
associating to a set $X$ the discrete category on that set. We denote by
\begin{equation*}
    \zgu{1}:\Id\Rw \di{1}q
\end{equation*}
the unit of the adjunction $q\dashv \di{1}$.
\begin{lemma}\label{lem-q-pres-fib-pro}
    $q$ preserves fiber products over discrete objects and sends
    equivalences of categories to isomorphisms.
\end{lemma}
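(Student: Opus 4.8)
The plan is to prove the two assertions separately, using the fact that $q$ is a left adjoint (hence preserves colimits) together with an explicit description of $q$ and of pullbacks over discrete objects in $\Cat$.

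First I would record the concrete descriptions. For a category $A$, the set $qA$ is the quotient of $\Obj A$ by the equivalence relation generated by $a\sim a'$ whenever there is an arrow $a\rw a'$; and a functor $F:A\rw B$ induces $qF:qA\rw qB$, $[a]\mapsto[Fa]$. If $D$ is a discrete category (so $D=\di{1}S$ for a set $S=\Obj D$), then any functor $A\rw D$ is constant on each connected component, so it factors uniquely as $A\rw \di{1}(qA)\rw D$; equivalently $\Hom(A,D)\cong\Hom(qA,S)$ in $\Set$, which is precisely the adjunction $q\dashv\di{1}$ evaluated at a discrete target, and it says $q(\di{1}S)=S$.

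For the first assertion, let $A\xrw{f} D \xlw{g} B$ with $D$ discrete, and let $P=A\times_D B$ be the pullback in $\Cat$, computed as $\Obj P=\Obj A\times_{\Obj D}\Obj B$ and similarly on arrows. Since $D$ is discrete, an arrow of $P$ from $(a,b)$ to $(a',b')$ is exactly a pair consisting of an arrow $a\rw a'$ in $A$ and an arrow $b\rw b'$ in $B$ lying over the unique identity of the common object $fa=gb$; in particular $(a,b)$ and $(a',b')$ are connected in $P$ iff $a,a'$ are connected in $A$ and $b,b'$ are connected in $B$ (the "only if" is immediate from functoriality of the projections, and the "if" follows because a zig-zag in $A$ and a zig-zag in $B$ between points with the same $D$-image can be assembled componentwise into a zig-zag in $P$, using that $f,g$ are constant on components and land in a discrete category). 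This yields a bijection $qP\cong qA\times_{qD} qB$, and since $q(\di{1}S)=S$ we have $qD=\Obj D$, so this is exactly $q(A\times_D B)\cong qA\times_{qD}qB$, i.e. $q$ preserves these fiber products. I expect the assembling of componentwise zig-zags to be the only mildly delicate point, and the hypothesis that $D$ is discrete is exactly what makes it work.

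For the second assertion, let $F:A\rw B$ be an equivalence of categories with pseudo-inverse $G:B\rw A$ and natural isomorphisms $\Id_A\cong GF$, $\Id_B\cong FG$. Naturally isomorphic functors induce the same map on $q$ (a natural isomorphism $\eta:H\Rw K$ gives, for each object $a$, an arrow $\eta_a:Ha\rw Ka$, hence $[Ha]=[Ka]$ in $qB$, so $qH=qK$). Therefore $qG\circ qF=q(GF)=q(\Id_A)=\Id_{qA}$ and similarly $qF\circ qG=\Id_{qB}$, so $qF$ is a bijection, i.e.\ an isomorphism in $\Set$. This part is routine. Overall the main obstacle is the "if" direction of the connectedness criterion for $P$ in the first part; everything else is a direct unwinding of definitions and the adjunction.
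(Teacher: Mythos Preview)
Your proof is correct. The only organisational difference from the paper is in the first assertion: the paper first proves that $q$ preserves binary products (by the same zig-zag argument you sketch, going $(c,d)\text{---}(c',d)\text{---}(c',d')$), and then observes that for $D$ discrete one has a decomposition
\[
A\tiund{D}B \;\cong\; \underset{x\in D}{\coprod}\, A_x \times B_x
\]
into a coproduct of products of fibres; since $q$ is a left adjoint it preserves coproducts, and the fibre product claim follows. You instead analyse the pullback $P$ directly. The core path-construction is the same in both approaches; your phrase ``assembled componentwise'' should be read as ``first run the $A$-zig-zag paired with the identity on $b$, then the $B$-zig-zag paired with the identity on $a'$'', which is exactly the paper's path. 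The paper's decomposition makes the role of discreteness of $D$ slightly more transparent (it is what allows the coproduct splitting), while your direct argument avoids the auxiliary product lemma. For the second assertion your argument via ``naturally isomorphic functors induce equal maps on $q$'' is the same as the paper's, stated a bit more explicitly.
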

\begin{proof}
We claim that $q$ preserves products; that is, given categories
$\clC$ and $\clD$, there is a bijection
\begin{equation*}
    q(\clC\times \clD)=q(\clC)\times q(\clD)\;.
\end{equation*}
In fact, given $(c,d)\in q(\clC\times \clD)$ the map
$q(\clC\times\clD)\rw q(\clC)\times q(\clD)$ given by
$[(c,d)]=([c],[d])$ is well defined and is clearly surjective. On
the other hand, this map is also injective: given $[(c,d)]$ and
$[(c',d')]$ with $[c]=[c']$ and $[d]=[d']$, we have paths in $\clC$
\newcommand{\lin}{-\!\!\!-\!\!\!-}
\begin{equation*}
\xymatrix @R5pt{c \hspace{2mm} \lin \hspace{2mm}\cdots \hspace{2mm}
\lin
\hspace{2mm}c'\\
d \hspace{2mm} \lin \hspace{2mm}\cdots \hspace{2mm} \lin
\hspace{2mm}d' }
\end{equation*}
and hence a path in $\clC\times \clD$
\begin{equation*}
(c,d)\hspace{2mm}\lin\hspace{2mm}\cdots\hspace{2mm}\lin\hspace{2mm}(c',d)
\hspace{2mm}\lin\hspace{2mm}\cdots\hspace{2mm}\lin\hspace{2mm}(c',d')\;.
\end{equation*}
Thus $[(c,d)]=[(c',d')]$ and so the map is also injective, hence it
is a bijection, as claimed.

Given a diagram in $\Cat$ $\xymatrix{\clC\ar_{f}[r] & \clE & \clD
\ar^{g}[l]}$ with $\clE$ discrete, we have
\begin{equation}\label{eq-q-pres-fib-pro}
    \clC\tiund{\clE}\clD=\underset{x\in\clE}{\coprod}\clC_x\times
    \clD_x
\end{equation}
where $\clC_x,\;\clD_x$ are the full subcategories of $\clC$ and
$\clD$ with objects $c,\;d$ such that \;$f(c)=x=g(d)$. Since $q$ preserves
products and (being left adjoint) coproducts, we conclude by
\eqref{eq-q-pres-fib-pro} that
\begin{equation*}
    q(\clC\tiund{\clE}\clD)\cong q(\clC)\tiund{\clE}\,q(\clD)\;.
\end{equation*}
Finally, if $F:\clC\simeq \clD:G$ is an equivalence of categories,
$FG\,\clC\cong\clC$ and $FG\,\clD\cong \clD$ which implies that
$qF\,qG\,\clC\cong q\clC$ and $qF\,qG\,\clD\cong q\clD$, so $q\clC$
and $q\clD$ are isomorphic.
\end{proof}
The isomorphism classes of objects functot
\begin{equation*}
    p:\Cat\rw\Set
\end{equation*}
associates to a category the set of isomorphism classes of its objects. Notice that if $\clC$ is a groupoid, $p\clC=q\clC$.
\begin{lemma}\label{lem-iso-cla-obj-fib-pro}\

    \begin{itemize}
      \item [a)] Let $X\xrw{f} Z \xlw{g}Y$ be a diagram in $\Cat$. Then
      \begin{equation*}
        p(X\tiund{Z}Y)\subseteq pX\tiund{pZ}pY\;.
      \end{equation*}

      \item [b)] Suppose, further, that $g_0=\Id$. Then
      \begin{equation*}
        p(X\tiund{Z}Y)\cong pX\tiund{pZ}pY\;.
      \end{equation*}
    \end{itemize}
\end{lemma}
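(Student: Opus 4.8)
The plan is to reduce everything to the case $n=1$ (that is, ordinary categories) by working pointwise, just as in the proof of Lemma \ref{lem-char-obj-II}, and then to analyze the two statements directly on objects and morphisms. Recall that $p\clC$ is the set of isomorphism classes of objects of a category $\clC$. For part a), I would first note that the universal property of the pullback $X\tiund{Z}Y$ gives projection functors to $X$ and $Y$ compatible with $f$ and $g$; applying $p$ (which is functorial, since an isomorphism in a product category is a pair of isomorphisms, so $p$ preserves the relevant structure needed here) yields a map $p(X\tiund{Z}Y)\rw pX\tiund{pZ}pY$. The content of a) is that this map is injective. So I would take two objects $(x,y),(x',y')$ of the pullback — meaning $f(x)=g(y)$ and $f(x')=g(y')$ on the nose — whose images agree in $pX\tiund{pZ}pY$, i.e. $x\cong x'$ in $X$ and $y\cong y'$ in $Y$. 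Fix isomorphisms $\za:x\rw x'$ in $X$ and $\zb:y\rw y'$ in $Y$. The obstruction to concluding $(x,y)\cong(x',y')$ in the pullback is that $(\za,\zb)$ need not be a morphism of the pullback: this requires $f(\za)=g(\zb)$ in $Z$, which is not automatic. But $f(\za):f(x)\rw f(x')$ and $g(\zb):g(y)\rw g(y')$ are isomorphisms in $Z$ between the same pair of objects (using $f(x)=g(y)$, $f(x')=g(y')$); replacing $\zb$ by $g(\zb)^{-1}f(\za)$... no — better: the pair $(\za, \zb')$ where $\zb'$ is chosen so that $g(\zb') = f(\za)$. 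Here is the clean route: $f(\za):f(x)\xrightarrow{\cong}f(x')$ is an isomorphism in $Z$; since $x\cong x'$ and $y\cong y'$, the objects $(x,y)$ and $(x',y')$ both live in the pullback, and I need an iso between them. Set $\zg := f(\za) = $ an iso $f(x)=g(y)\rw f(x')=g(y')$ in $Z$. This does not obviously lift to $Y$. So instead I would argue that $p(X\tiund Z Y)\subseteq pX\tiund{pZ}pY$ by simply observing the map is well-defined on iso-classes and leave injectivity as the weaker containment claim — rereading the statement, a) only asserts $\subseteq$, i.e. that the canonical map lands in the fiber product, which is immediate from functoriality of $p$ applied to the pullback cone; there is nothing to prove beyond checking $pf\circ(\text{proj}_X)=pg\circ(\text{proj}_Y)$, which holds since $f\circ\text{proj}_X=g\circ\text{proj}_Y$.

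For part b), the extra hypothesis $g_0=\Id$ means $Y$ and $Z$ have the same objects and $g$ is the identity on objects. The plan is to show the canonical map $p(X\tiund{Z}Y)\rw pX\tiund{pZ}pY$ is surjective (combined with a), which in the $g_0=\Id$ case should upgrade to a bijection). Given a class $([x],[y])\in pX\tiund{pZ}pY$, we have $[f(x)]=[g(y)]$ in $pZ$, i.e. there is an isomorphism $\zt:f(x)\rw g(y)$ in $Z$. Since $g_0=\Id$, the object $g(y)=y$ is an object of $Z$ that is literally an object of $Y$, and $f(x)$ is an object of $Z$; but to build an object of the pullback I need an object $x''$ of $X$ and $y''$ of $Y$ with $f(x'')=g(y'')$ \emph{on the nose}. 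Using the isomorphism $\zt:f(x)\rw g(y)$ in $Z$ and the fact that $g$ is bijective on objects (identity, even), I can transport: since $g_0=\Id$ is in particular surjective on objects and $\zt:f(x)\to y$ in $Z$, and there need not be a lift of $\zt$ to $Y$... here the hypothesis really bites. The right move: because $g_0 = \Id$, $g$ is an isofibration trivially? No. Let me instead use that $p$ of an iso-comma / pullback with one leg identity-on-objects: pick the object $x$ of $X$; then $f(x)$ is an object of $Z = $ objects of $Y$; so $(x, f(x))$ — wait, is $f(x)\in\Obj Y$? Yes since $\Obj Y=\Obj Z$ via $g_0=\Id$. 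And $f(x)=g(f(x))$ since $g_0=\Id$. Hence $(x,f(x))$ is a bona fide object of $X\tiund Z Y$, and it maps to $([x],[f(x)])=([x],[g(y)])=([x],[y])$ in $pX\tiund{pZ}pY$ using $\zt:f(x)\cong g(y)$ in $Z$ and — the remaining gap — $[f(x)]=[y]$ in $pY$, which needs $f(x)\cong y$ \emph{in $Y$}, not just in $Z$. This is exactly where I expect the main obstacle to sit, and the paper's hypothesis $g_0 = \Id$ must be used together with the interpretation that $[y]$ ranges over $pY$ while the fiber-product condition is taken in $pZ$; I believe the resolution is that when $g_0=\Id$, an isomorphism $f(x)\cong y$ in $Z$ is \emph{not} needed in $Y$ because the second coordinate of the target class is constrained only up to $pZ$, but this forces me to re-examine what $pX\tiund{pZ}pY$ means: $py:pY\to pZ$ is induced by $g$, and since $g_0=\Id$, $py$ is surjective and identifies iso-classes of $Y$ that become iso in $Z$.

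So the honest plan for b) is: (i) by a), the canonical map $\zf: p(X\tiund Z Y)\to pX\tiund{pZ}pY$ is defined; (ii) for surjectivity, given $([x],[y])$ with $pf[x]=pg[y]$, choose a representing iso $\zt\colon f(x)\xrightarrow{\cong} g(y)$ in $Z$; since $g_0=\Id$, $f(x)$ is an object of $Y$ with $g(f(x))=f(x)$ on the nose, so $(x,f(x))$ is an object of the pullback; and $pg[f(x)] = [f(x)] = [g(y)] = pg[y]$ in $pZ$, so $([x],[f(x)])$ and $([x],[y])$ \emph{are the same element} of $pX\tiund{pZ}pY$ — because the fiber product is taken over $pZ$ and $pg$ sends both $[f(x)]$ and $[y]$ to $[g(y)]$... no, that's still not equality in $pY$. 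The genuine fix: $pX \tiund{pZ} pY$ has elements $([x],[y])$ with $pf[x]=pg[y]$; two such are equal iff $[x]=[x']$ in $pX$ and $[y]=[y']$ in $pY$. So I do need $[f(x)]=[y]$ in $pY$. Claim: this follows because $g_0=\Id$ implies $g$ reflects isomorphisms on the relevant hom... Actually the cleanest: when $g_0 = \Id$, the functor $pg: pY \to pZ$ is a bijection on the image and more to the point $X\tiund Z Y$ already has object set $\{(x,y): f(x)=y\}\cong \Obj X$ via $x\mapsto(x,f(x))$, so $p(X\tiund Z Y)$ is the quotient of $\Obj X$ by: $x\sim x'$ iff $(x,f(x))\cong(x',f(x'))$ in the pullback iff $\exists\,\za:x\cong x'$ in $X$ with $f(\za)$ equal (not just iso) to some iso $f(x)\cong f(x')$ in $Y$; and I must match this to the quotient defining $pX\tiund{pZ}pY$. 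I expect the main obstacle — and the place to spend the real work — is verifying that these two equivalence relations on $\Obj X$ coincide, which should come down to a diagram chase using that $g$ is the identity on objects (hence isos in $Z$ between objects of $Y$ are "the same as" isos one is free to pull back along), together with functoriality of $p$ from part a) handling one inclusion and an explicit section handling the other. I would write b) by constructing the explicit inverse $pX\tiund{pZ}pY\to p(X\tiund Z Y)$, $([x],[y])\mapsto[(x,f(x))]$, checking well-definedness (independence of representatives, using $g_0=\Id$ to see $[f(x)]=[f(x')]$ in $pY$ whenever $[x]=[x']$ in $pX$ and the $pZ$-components match — this is the crux), and checking it is a two-sided inverse to $\zf$.
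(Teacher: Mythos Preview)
Your hesitations are well-founded. The paper reads a) as injectivity of the canonical map $j$, and its argument picks isomorphisms $\za:a\cong a'$ in $X$, $\zb:b\cong b'$ in $Y$ and then simply asserts $f\za=g\zb$ in $Z$ so that $(\za,\zb)$ is an isomorphism in the pullback --- precisely the step you balked at. For b), the paper argues surjectivity exactly as you do, via $(x,f(x))$, and then writes $(p(x),p f(x))=(p(x),p(y))$; but its justification only establishes $[f(x)]=[y]$ in $pZ$, not in $pY$, which is again the crux you isolated and could not close.

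Neither gap closes under the stated hypotheses, and the lemma is false as written. Take $Z$ to be a groupoid on objects $0,1$ with two distinct isomorphisms $\zvp,\psi:0\rw 1$; let $Y$ be the walking isomorphism $\zg:0\rw 1$ on the same objects with $g_0=\Id$ and $g(\zg)=\psi$; let $X$ be the walking isomorphism $\za:a\rw a'$ with $f(a)=0$, $f(a')=1$, $f(\za)=\zvp$. Then $X\tiund{Z}Y$ has exactly the two objects $(a,0),(a',1)$ and no isomorphism between them (the only candidate $(\za,\zg)$ fails $f\za=g\zg$), so $p(X\tiund{Z}Y)$ has two elements while $pX\tiund{pZ}pY$ is a singleton. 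What rescues the paper's single use of the lemma (the base case of Lemma~\ref{lem-p2-n-1}) is that there $g$ is the comparison $\p{2}Y\rw\q{2}Y$, which Remark~\ref{rem-spec-isofib} records as an identity-on-objects isofibration; with that extra hypothesis one can lift $f\za$ along $g$ to an isomorphism in $Y$, and both arguments go through. So the obstruction you identified is real: it is not a missing trick but a missing hypothesis on $g$.
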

\begin{proof}\

\nid a) The map
\begin{equation*}
    j:p(X\tiund{Z}Y)\rw pX\tiund{pZ}pY
\end{equation*}
is determined by the maps
\begin{equation*}
    p(X\tiund{Z}Y)\rw pX\quad \text{and}\quad p(X\tiund{Z}Y)\rw pY
\end{equation*}
induced by the projections
\begin{equation*}
    X\tiund{Z}Y \rw X \quad \text{and}\quad X\tiund{Z}Y \rw Y\;.
\end{equation*}
Thus, for each $(a,b)\in X\tiund{Z}Y$,
\begin{equation}\label{eq1-lem-iso-cla}
    j\,p(a,b)=(p(a),p(b))\;.
\end{equation}
Let $(a,b), (a',b')\in X\tiund{Z}Y$ be such that $p(a,b)=p(a',b')$. It follows by \eqref{eq1-lem-iso-cla} that $p(a)=p(a')$ and $p(b)=p(b')$. Thus there are isomorphisms $\za:a\cong a'$ in $X$ and $\zb: b\cong b'$ in $Y$ and in $Z$ we have
\begin{equation*}
    f\za=g\zb: fa=gb\cong fa'=gb'\;.
\end{equation*}
Thus $(\za,\zb):(a,b)\cong (a',b')$ is an isomorphism in $X\tiund{Z} Y$ and so $p(a,b) = p(a',b')$. This shows that $j$ is injective, proving a).
\mk

\nid b) By a) the map $j$ is injective. We now show that, if $g_0=\Id$, then $j$ is also surjective. Let $(px,py)\in pX\tiund{pZ} pY$ be such that $g_0=\Id$. Then
\begin{equation*}
    f p(x)= p f(x)=gp(y)=pg(y)=p(y)\;.
\end{equation*}
Also, $f(x)=g(f(x))$, so that $(x,f(x))\in X\tiund{Z}Y$. It follows that
\begin{equation*}
    j(x,f(x))=(p(x),pf(x))=(p(x),p(y))
\end{equation*}
so that $j$ is surjective. Hence $j$ is a bijection.

\end{proof}
\subsection{Pseudo-functors and their strictification}\label{sbs-pseudo-functors}
We recall the classical theory of strictification of pseudo-functors, see \cite{PW}, \cite{Lack}.

The functor 2-category $\funcat{n}{\Cat}$ is 2-monadic over $[ob(\Dnop),\Cat]$ where $ob(\Dnop)$ is the set of objects of $\Dnop$. Let
\begin{equation*}
    U:\funcat{n}{\Cat}\rw [ob(\Dnop),\Cat]
\end{equation*}
be the forgetful functor $(UX)_{\uk}=X_{\uk}$. Its left adjoint $F$ is given on objects by
\begin{equation*}
    (FH)_{\uk}=\underset{\ur\in ob(\Dnop)}{\coprod}\Dnop(\ur,\uk)\times H_{\ur}
\end{equation*}
for $H\in [ob(\Dmenop),\Cat]$, $\uk\in ob(\Dmenop)$. If $T$ is the monad corresponding to the adjunction $F\dashv U$, then
\begin{equation*}
    (TH)_{\uk}=\underset{\ur\in ob(\Dnop)}{\coprod}\Dnop(\ur,\uk)\times H_{\ur}
\end{equation*}
A pseudo $T$-algebra is given by $H\in [ob(\Dnop),\Cat]$,
\begin{equation*}
    h_{n}: \underset{\ur\in ob(\Dnop)}{\coprod}\Dnop(\ur,\uk)\times H_{\ur} \rw H_{\uk}
\end{equation*}
and additional data, as described in \cite{PW}. This amounts precisely to functors from $\Dnop$ to $\Cat$ and the 2-category $\sf{Ps\mi T\mi alg}$ of pseudo $T$-algebras corresponds to the 2-category $\Ps\funcat{n}{\Cat}$ of pseudo-functors, pseudo-natural transformations and modifications.

The strictification result proved in \cite{PW} yields that every pseudo-functor from $\Dnop$ to $\Cat$ is equivalent, in $\Ps\funcat{n}{\Cat}$, to a 2-functor.

Given a pseudo $T$-algebra as above, \cite{PW} consider the factorization  of $h:TH\rw H$ as
\begin{equation*}
    TH\xrw{v}L\xrw{g}H
\end{equation*}
with $v_{\uk}$ bijective on objects and $g_{\uk}$ fully faithful, for each $\uk\in\Dnop$. It is shown in \cite{PW} that it is possible to give a strict $T$-algebra structure $TL\rw L$ such that $(g,Tg)$ is an equivalence of pseudo $T$-algebras. It is immediate to see that, for each $\uk\in\Dnop$, $g_{\uk}$ is an equivalence of categories.

Further, it is shown in \cite{Lack} that $\St:\psc{n}{\Cat}\rw\funcat{n}{\Cat}$ as described above is left adjoint to the inclusion
\begin{equation*}
  J:\funcat{n}{\Cat}\rw\psc{n}{\Cat}
\end{equation*}
 and that the components of the units are equivalences in $\psc{n}{\Cat}$.
\subsection{Transport of structure}\label{transport-structure} We now recall a general categorical technique, known as transport of structure along an adjunction, with one of its applications. This will be used crucially in the proof of Theorem \ref{the-XXXX}.
\begin{theorem}\rm{\cite[Theorem 6.1]{lk}}\em\label{s2.the1}
    Given an equivalence $\;\eta,\;\zve : f \dashv f^* : A\rw B$ in the complete and locally small
    2-category $\clA$, and an algebra $(A,a)$ for the monad $T=(T,i,m)$ on $\clA$, the
    equivalence enriches to an equivalence
\begin{equation*}
  \eta,\zve:(f,\ovll{f})\vdash (f^*,\ovll{f^*}):(A,a)\rw(B,b,\hat{b},\ovl{b})
\end{equation*}
in $\PsTalg$, where $\hat{b}=\eta$, $\;\ovl{b}=f^* a\cdot T\zve \cdot
Ta\cdot T^2 f$, $\;\ovll{f}=\zve^{-1} a\cdot Tf$, $\;\ovll{f^*}=f^* a\cdot
T\zve$.
\end{theorem}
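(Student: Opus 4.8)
The statement to prove is Theorem~\ref{s2.the1}, which is quoted verbatim from Kelly's \cite[Theorem 6.1]{lk}; so a proof proposal should explain how one establishes that an adjoint equivalence in a 2-category $\clA$ can be transported across to endow the other object with a pseudo-$T$-algebra structure, together with explicit formulas for the coherence data.

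\begin{proof}[Proof sketch]
The plan is to transport the algebra structure $a\colon TA\rw A$ along the equivalence to define the structure map $b\colon TB\rw B$, and then to read off the required coherence 2-cells from the unit and counit of the equivalence together with the 2-functoriality of $T$. First I would set $b = f^*\cdot a\cdot Tf\colon TB\rw B$: this is the only reasonable candidate, being the conjugate of $a$ by the equivalence. The unit constraint $\hat b$ must be an invertible 2-cell $\hat b\colon 1_B\Rw b\cdot i_B$; since $i$ is a monad unit so $a\cdot i_A\cong 1_A$ (for a pseudo-algebra this is part of the data, for a strict one an equality), one checks $b\cdot i_B = f^*\cdot a\cdot Tf\cdot i_B = f^*\cdot a\cdot i_A\cdot f \cong f^*\cdot f$, and the latter is isomorphic to $1_B$ via $\eta$; hence one is forced to take $\hat b = \eta$ (up to the canonical isomorphisms just indicated). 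Similarly the associativity constraint $\ovl b\colon b\cdot Tb\Rw b\cdot m_B$ is obtained by pasting: expand both sides using $b=f^*aTf$, insert $\zve\colon ff^*\cong 1_A$ to collapse the inner $Tf\cdot Tf^*$ appearing in $Tb$, use the strict associativity $a\cdot Ta = a\cdot m_A$ of the $T$-algebra $A$, and use naturality of $m$; this produces exactly the composite $\ovl b = f^* a\cdot T\zve\cdot Ta\cdot T^2 f$ written in the statement.

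Next I would check that $(B,b,\hat b,\ovl b)$ satisfies the two pseudo-algebra coherence axioms (the pentagon relating $\ovl b$ to $m$, and the two unit triangles relating $\hat b$ to $\ovl b$). Each of these is a pasting-diagram identity in $\clA$; the strategy is to substitute the definitions of $b,\hat b,\ovl b$, then repeatedly use (i) the triangle identities $\zve f\cdot f\eta = 1_f$, $f^*\zve\cdot\eta f^* = 1_{f^*}$ for the adjoint equivalence, (ii) the monad axioms on $(T,i,m)$, (iii) the pseudo-algebra (or here strict) axioms of $(A,a)$, and (iv) 2-naturality of $i$ and $m$ and 2-functoriality of $T$ applied to $\eta,\zve$. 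These are bookkeeping verifications: diagram-chase each axiom until both sides reduce to the same canonical pasting. I would then define the comparison 1-cells $\ovll f\colon (f,\dots)$ and $\ovll{f^*}$ as the 2-cells $\zve^{-1}a\cdot Tf$ and $f^*a\cdot T\zve$ respectively, making $f$ and $f^*$ into pseudo-$T$-morphisms, and verify the two pseudo-morphism axioms for each by the same kind of chase.

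Finally I would verify that $\eta$ and $\zve$ are 2-cells in $\PsTalg$, i.e.\ that they are compatible with the structure 2-cells $\ovll f$ and $\ovll{f^*}$ just defined — again a pasting identity that follows from the triangle identities and naturality — and that they still form an adjoint equivalence there, which is automatic once they are morphisms in $\PsTalg$ whose underlying 2-cells are $\eta,\zve$ and the underlying 1-cells are an adjoint equivalence in $\clA$, since the forgetful 2-functor $\PsTalg\rw\clA$ is locally faithful and reflects such structure. The main obstacle, and the only genuinely non-formal point, is the coherence axiom verifications in the second paragraph: the pasting diagrams are large, and one must be careful that each application of a triangle identity or of 2-functoriality of $T$ is inserted at the correct place; however, since $\clA$ is assumed complete and locally small there is no size or existence issue, and the completeness is only needed to guarantee that $T$ and the relevant limits behave well. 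As this theorem is exactly Kelly's result, I would in the write-up simply cite \cite{lk} for the full verification and record here only the formulas for $b,\hat b,\ovl b,\ovll f,\ovll{f^*}$, which are what the later application in Theorem~\ref{the-XXXX} actually uses.
\end{proof}
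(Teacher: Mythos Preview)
The paper does not prove this theorem at all: it is stated as a quotation of \cite[Theorem 6.1]{lk} and no proof is given in the paper's body. Your sketch therefore goes well beyond what the paper does, and your final paragraph --- simply citing \cite{lk} and recording the formulas --- is exactly how the paper treats it.

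That said, your outline of the underlying argument is the standard and correct one: conjugate $a$ by the equivalence to define $b=f^* a\, Tf$, manufacture $\hat b$ from $\eta$ and $\ovl b$ from $\zve$ together with the strict algebra axiom for $A$, and verify the pseudo-algebra and pseudo-morphism coherence axioms by pasting-diagram chases using the triangle identities and 2-functoriality of $T$. One small inaccuracy: completeness and local smallness of $\clA$ are not used to ``guarantee that $T$ and the relevant limits behave well'' --- no limits are taken in this construction; these hypotheses are part of the ambient setting in \cite{lk} (where doctrinal adjunction and related machinery are developed) rather than ingredients of this particular proof. Otherwise your sketch matches the Kelly--Lack argument.
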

Let $\eta',\zve':f'\vdash f'^{*}:A'\rw B'$ be another equivalence in $\clA$ and
let $(B',b',\hat{b'},\ovl{b'})$ be the corresponding pseudo-$T$-algebra as in
Theorem \ref{s2.the1}. Suppose $g:(A,a)\rw(A',a')$ is a morphism in $\clA$ and
$\gamma$ is an invertible 2-cell in $\clA$
\begin{equation*}
  \xy
    0;/r.10pc/:
    (-20,20)*+{B}="1";
    (-20,-20)*+{B'}="2";
    (20,20)*+{A}="3";
    (20,-20)*+{A'}="4";
    {\ar_{f^*}"3";"1"};
    {\ar_{h}"1";"2"};
    {\ar^{f'^*}"4";"2"};
    {\ar^{g}"3";"4"};
    {\ar@{=>}^{\gamma}(0,3);(0,-3)};
\endxy
\end{equation*}
Let $\ovl{\gamma}$ be the invertible 2-cell given by the following pasting:
\begin{equation*}
    \xy
    0;/r.15pc/:
    (-40,40)*+{TB}="1";
    (40,40)*+{TB'}="2";
    (-40,-40)*+{B}="3";
    (40,-40)*+{B'}="4";
    (-20,20)*+{TA}="5";
    (20,20)*+{TA'}="6";
    (-20,-20)*+{A}="7";
    (20,-20)*+{A'}="8";
    {\ar^{Th}"1";"2"};
    {\ar_{b}"1";"3"};
    {\ar^{b'}"2";"4"};
    {\ar_{h}"3";"4"};
    {\ar_{Tg}"5";"6"};
    {\ar^{}"5";"7"};
    {\ar_{}"6";"8"};
    {\ar^{g}"7";"8"};
    {\ar_{Tf^*}"5";"1"};
    {\ar^{f^*}"7";"3"};
    {\ar^{Tf'^*}"6";"2"};
    {\ar^{f'^*}"8";"4"};
    {\ar@{=>}^{(T\gamma)^{-1}}(0,33);(0,27)};
    {\ar@{=>}^{\gamma}(0,-27);(0,-33)};
    {\ar@{=>}^{\ovll{f'^*}}(30,3);(30,-3)};
    {\ar@{=>}^{\ovll{f^*}}(-30,3);(-30,-3)};
\endxy
\end{equation*}
Then it is not difficult to show that
$(h,\ovl{\gamma}):(B,b,\hat{b},\ovl{b})\rw(B',b',\hat{b'},\ovl{b'})$ is a
pseudo-$T$-algebra morphism.

The following fact is essentially known and, as sketched in the proof below, it is an instance of Theorem \ref{s2.the1}
\begin{lemma}\cite{PP}\label{lem-PP}
     Let $\clC$ be a small 2-category, $F,F':\clC\rw\Cat$ be 2-functors, $\alpha:F\rw F'$
    a 2-natural transformation. Suppose that, for all objects $C$ of $\clC$, the
    following conditions hold:
\begin{itemize}
  \item [i)] $G(C),\;G'(C)$ are objects of $\Cat$ and there are adjoint equivalences of
  categories $\mu_C\vdash\eta_C$, $\mu'_C\vdash\eta'_C$,
\begin{equation*}
  \mu_C:G(C)\;\rightleftarrows\;F(C):\eta_C\qquad\qquad
  \mu'_C:G'(C)\;\rightleftarrows\;F'(C):\eta'_C,
\end{equation*}
  \item [ii)] there are functors $\beta_C:G(C)\rw G'(C),$
  \item [iii)] there is an invertible 2-cell
\begin{equation*}
  \gamma_C:\beta_C\,\eta_C\Rightarrow\eta'_C\,\alpha_C.
\end{equation*}
\end{itemize}
Then
\begin{itemize}
  \item [a)] There exists a pseudo-functor $G:\clC\rw\Cat$ given on objects by $G(C)$,
  and pseudo-natural transformations $\eta:F\rw G$, $\mu:G\rw F$ with
  $\eta(C)=\eta_C$, $\mu(C)=\mu_C$; these are part of an adjoint equivalence
  $\mu\vdash\eta$ in the 2-category $\Ps[\clC,\Cat]$.
  \item [b)] There is a pseudo-natural transformation $\beta:G\rw G'$ with
  $\beta(C)=\beta_C$ and an invertible 2-cell in $\Ps[\clC,\Cat]$,
  $\gamma:\beta\eta\Rightarrow\eta\alpha$ with $\gamma(C)=\gamma_C$.
\end{itemize}
\end{lemma}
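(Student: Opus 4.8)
The plan is to derive Lemma \ref{lem-PP} from Theorem \ref{s2.the1} applied in the 2-category $\clA = [\clC,\Cat]$ (the functor 2-category), using the fact, recalled in \S\ref{sbs-pseudo-functors}, that this 2-category is 2-monadic over $[ob\,\clC,\Cat]$; write $T$ for the corresponding 2-monad. The point is that the hypotheses (i)--(iii) are pointwise data, whereas the conclusion asserts that these data assemble coherently along all morphisms of $\clC$; transport of structure is exactly the device that promotes pointwise equivalences to an equivalence in $\PsTalg$, hence in $\Ps[\clC,\Cat]$.

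First I would package the pointwise adjoint equivalences $\mu_C \vdash \eta_C$ into a single adjoint equivalence in $\clA$. Set $UG := (G(C))_{C}\in[ob\,\clC,\Cat]$, and observe that $F$, being a 2-functor, is an algebra $(A,a)$ for $T$ via the canonical $T$-algebra structure on objects of $\funcat{}{\Cat}$-type; the collections $(\mu_C)_C$ and $(\eta_C)_C$ define $1$-cells $f := \mu \colon UG \to UF$, $f^* := \eta\colon UF \to UG$ in the base $2$-category $[ob\,\clC,\Cat]$, and the pointwise triangle identities say $\eta,\zve \colon f \dashv f^*$ is an adjoint equivalence there. Now apply Theorem \ref{s2.the1} with $\clA = [ob\,\clC,\Cat]$... more precisely, the cleanest route is to work directly in $\clA=\funcat{}{\Cat}$-analogue $[\clC,\Cat]$ viewed through $U\colon\Ps[\clC,\Cat]\simeq\PsTalg \to [ob\,\clC,\Cat]$: Theorem \ref{s2.the1} enriches the equivalence $f\dashv f^*$ in $[ob\,\clC,\Cat]$ against the algebra $(F,a)$ to an equivalence $(f,\ovll f)\dashv(f^*,\ovll{f^*})$ in $\PsTalg$, and the pseudo-$T$-algebra $(UG,b,\hat b,\ovl b)$ so produced corresponds, under $\PsTalg\simeq\Ps[\clC,\Cat]$, to a pseudo-functor $G\colon\clC\to\Cat$ with $G(C)$ the given categories, together with pseudo-natural transformations $\eta\colon F\to G$, $\mu\colon G\to F$ restricting to $\eta_C,\mu_C$ and forming an adjoint equivalence $\mu\dashv\eta$ in $\Ps[\clC,\Cat]$. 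This is part (a). The explicit formulas $\hat b = \eta$, $\ovl b = f^*a\cdot T\zve\cdot Ta\cdot T^2f$, etc., from Theorem \ref{s2.the1} give the structure $2$-cells of $G$, but for the statement I only need their existence.

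For part (b), I would apply the second half of the transport-of-structure discussion preceding the lemma: the functors $\beta_C$ assemble into a $1$-cell $h := \beta$ in $[ob\,\clC,\Cat]$, the $2$-natural transformation $\alpha\colon F\to F'$ is a morphism $g\colon(F,a)\to(F',a')$ of $T$-algebras, and the invertible $2$-cells $\gamma_C$ assemble into an invertible $2$-cell $\gamma$ filling the square with sides $f^*=\eta$, $h=\beta$, $f'^*=\eta'$, $g=\alpha$. The displayed pasting construction of $\ovl\gamma$ then shows $(h,\ovl\gamma)\colon(UG,b,\hat b,\ovl b)\to(UG',b',\hat{b'},\ovl{b'})$ is a pseudo-$T$-algebra morphism, i.e.\ corresponds to a pseudo-natural transformation $\beta\colon G\to G'$ with $\beta(C)=\beta_C$, and $\gamma$ itself becomes an invertible $2$-cell $\gamma\colon\beta\eta\Rightarrow\eta'\alpha$ in $\Ps[\clC,\Cat]$ with the prescribed components. (I note the statement writes $\gamma\colon\beta\eta\Rightarrow\eta\alpha$; this is the evident typo for $\eta'\alpha$, matching hypothesis (iii).)

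The main obstacle, and the only place any real work is needed, is bookkeeping the translation between the two guises of the same data: verifying that a $2$-functor $F\colon\clC\to\Cat$ really is canonically a strict $T$-algebra, that the pointwise adjoint equivalences genuinely constitute an adjoint equivalence in the base $2$-category $[ob\,\clC,\Cat]$ (this is immediate since adjunctions, triangle identities and invertibility of unit/counit are all tested objectwise there), and — most delicately — that the pseudo-$T$-algebra structure produced by Theorem \ref{s2.the1} corresponds under the $2$-equivalence $\PsTalg\simeq\Ps[\clC,\Cat]$ to a pseudo-functor whose structure constraints have $\eta,\mu$ as genuine pseudo-natural transformations with the stated components. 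All of this is "essentially known" (as the lemma's attribution to \cite{PP} indicates), so I would present it as an application of Theorem \ref{s2.the1} and the subsequent remark, indicating the identifications rather than recomputing the coherence pastings.
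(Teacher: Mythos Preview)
Your proposal is correct and follows essentially the same approach as the paper's own proof: both recognize that $[\clC,\Cat]$ is 2-monadic over $[ob(\clC),\Cat]$, package the pointwise adjoint equivalences $\mu_C\vdash\eta_C$ into an adjoint equivalence in $[ob(\clC),\Cat]$, and then invoke Theorem~\ref{s2.the1} to enrich it to an equivalence in $\PsTalg\simeq\Ps[\clC,\Cat]$, with part (b) handled by the pasting construction described immediately after that theorem. The paper additionally writes out the resulting explicit formulas $G(f)=\eta_D F(f)\mu_C$ and the naturality isomorphisms $\eta_f,\mu_f,\beta_f$, but this is elaboration rather than a different argument; your observation about the typo $\eta\alpha$ for $\eta'\alpha$ is also correct.
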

\begin{proof}
Recall \cite{PW} that the functor 2-category $[\clC,\Cat]$ is 2-monadic over
$[ob(\clC),\Cat]$, where $ob(\clC)$ is the set of objects in $\clC$. Let
\begin{equation*}
  \clU:[\clC,\Cat]\rw[ob(\clC),\Cat]
\end{equation*}
be the forgetful functor. Let $T$ be the 2-monad; then the pseudo-$T$-algebras are precisely the pseudo-functors from
$\clC$ to $\Cat$.

Then the adjoint equivalences $\mu_C\vdash\eta_C$ amount precisely to an
adjoint equivalence in $[ob(\clC),\Cat]$, $\;\mu_0\vdash\eta_0$,
$\;\mu_0:G_0\;\;\rightleftarrows\;\;\clU F:\eta_0$ where $\;G_0(C)=G(C)$ for
all $C\in ob(\clC)$. This equivalence enriches to an
adjoint equivalence $\mu\vdash\eta$ in $\Ps[\clC,\Cat]$
\begin{equation*}
  \mu:G\;\rightleftarrows\; F:\eta
\end{equation*}
between $F$ and a pseudo-functor $G$; it is $\clU G=G_0$, $\;\clU\eta=\eta_0$,
$\;\clU\mu=\mu_0$; hence on objects $G$ is given by $G(C)$, and
$\eta(C)=\clU\eta(C)=\eta_C$, $\;\mu(C)=\clU\mu(C)=\mu_C$.

Let $\nu_C:\Id_{G(C)}\Rw\eta_C\mu_C$ and $\zve_C:\mu_C\eta_C\Rw\Id_{F(C)}$ be
the unit and counit of the adjunction $\mu_C\vdash\eta_C$. Given a morphism $f:C\rw D$ in $\clC$, it is
\begin{equation*}
  G(f)=\eta_D F(f)\mu_C
\end{equation*}
and we have natural isomorphisms:
\begin{align*}
   & \eta_f : G(f)\eta_C=\eta_D F(f)\mu_C\eta_C\overset{\eta_D F(f)\zve_C}{=\!=\!=\!=\!\Rw} \eta_D F(f)\\
   & \mu_f : F(f)\mu_C\overset{\nu_{F(f)}\mu_C}{=\!=\!=\!\Rw}\mu_D\eta_D F(f)\mu_C=\mu_D
   G(f).
\end{align*}
Also, the natural isomorphism
\begin{equation*}
  \beta_f: G'(f)\beta_C\Rw\beta_D G(f)
\end{equation*}
is the result of the following pasting
\begin{equation*}
    \xy
    0;/r.15pc/:
    (-40,40)*+{G(C)}="1";
    (40,40)*+{G'(C)}="2";
    (-40,-40)*+{G(D)}="3";
    (40,-40)*+{G'(D)}="4";
    (-20,20)*+{F(C)}="5";
    (20,20)*+{F'(C)}="6";
    (-20,-20)*+{F(D)}="7";
    (20,-20)*+{F'(D)}="8";
    {\ar^{\beta_C}"1";"2"};
    {\ar_{G(f)}"1";"3"};
    {\ar^{G'(f)}"2";"4"};
    {\ar_{\beta_D}"3";"4"};
    {\ar^{\alpha_C}"5";"6"};
    {\ar^{F(f)}"5";"7"};
    {\ar_{F'(f)}"6";"8"};
    {\ar_{\alpha'_D}"7";"8"};
    {\ar^{}"5";"1"};
    {\ar^{}"7";"3"};
    {\ar^{}"6";"2"};
    {\ar^{}"8";"4"};
    {\ar@{=>}^{\gamma_C}(0,33);(0,27)};
    {\ar@{=>}^{\gamma_D^{-1}}(0,-27);(0,-33)};
    {\ar@{=>}^{\eta_f'}(30,3);(30,-3)};
    {\ar@{=>}^{\eta_f}(-30,3);(-30,-3)};
\endxy
\end{equation*}
\end{proof}
\section{Weakly globular $n$-fold categories}\label{sec-WG-nfol-cat}

In this section we review the notions of weakly globular \nfol category and of Segalic pseudo-functors introduced by the author in \cite{Pa2}. These will be used throughout the paper.

\subsection{Homotopically discrete $n$-fold categories}\label{homotopically-discrete}  We first recall the category of homotopically discrete \nfol categories, introduced by the author in \cite{Pa1}. This is needed to define weakly globular $n$-fold categories.
\begin{definition}\label{def-hom-dis-ncat}
    Define inductively the full subcategory $\cathd{n}\subset\cat{n}$ of homotopically discrete \nfol categories.

    For $n=1$, $\cathd{1}=\cathd{}$ is the category of  equivalence relations. Denote by $\p{1}=p:\Cat\rw\Set$ the isomorphism classes of objects functor.

    Suppose, inductively, that for each $1\leq k\leq n-1$ we defined $\cathd{k}\subset\cat{k}$ and $k$-equivalences such that the following holds:
    \begin{itemize}
      \item [a)] The $k^{th}$ direction in $\cathd{k}$ is groupoidal; that is, if $X\in\cathd{k}$, $\xi_{k}X\in\Gpd(\cat{k-1})$ (where $\xi_{k}X$ is as in Proposition \ref{pro-assoc-iso}).

      \item [b)] There is a functor $\p{k}:\cathd{k}\rw\cathd{k-1}$ making the following diagram commute:
          \begin{equation}\label{eq1-p-fun-def}
            \xymatrix{
            \cathd{k} \ar^{\Nu{k-1}...\Nu{1}}[rrr] \ar_{p^{(k)}}[d] &&& \funcat{k-1}{\Cat} \ar^{\bar p}[d]\\
            \cathd{k-1} \ar_{\N{k-1}}[rrr] &&& \funcat{k-1}{\Set}
            }
          \end{equation}
          Note that this implies that $(\p{k}X)_{s_1 ... s_{k-1}}=p X_{s_1 ... s_{k-1}}$ for all $(s_1 ... s_{k-1})\in\dop{k-1}$.

    \end{itemize}

   $\cathd{n}$ is the full subcategory of $\funcat{}{\cathd{n-1}}$ whose objects $X$ are such that
    \bigskip
    \begin{itemize}
      \item [(i)]  $\hspace{30mm} X_s\cong\pro{X_1}{X_0}{s} \quad \mbox{for all} \; s \geq 2.$

\medskip
    In particular this implies that $X\in \Cat(\Gpd(\cat{n-2})) =\Gpd(\cat{n-1})$ and the $n^{th}$ direction in $X$ is groupoidal.
    \medskip
      \item [(ii)] The functor
      \begin{equation*}
        \op{n-1}:\cathd{n}\subset \funcat{}{\cathd{n-1}}\rw\funcat{}{\cathd{n-2}}
      \end{equation*}
      restricts to a functor
      \begin{equation*}
        \p{n}:\cathd{n}\rw\cathd{n-1}
      \end{equation*}
     Note that this implies that $(\p{n}X)_{s_1 ... s_{n-1}}=p X_{s_1 ... s_{n-1}}$ and that the following diagram commutes
          \begin{equation}\label{eq2-p-fun-def}
            \xymatrix{
            \cathd{n} \ar^{\Nu{n-1}...\Nu{1}}[rrr] \ar_{p^{(n)}}[d] &&& \funcat{n-1}{\Cat} \ar^{\bar p}[d]\\
            \cathd{n-1} \ar_{\N{n-1}}[rrr] &&& \funcat{n-1}{\Set}
            }
          \end{equation}
     \end{itemize}
\end{definition}
\mk
\begin{definition}\label{def-hom-dis-ncat-0}

    Denote by $\zgu{n}_X:X\rw \di{n}\p{n}X$ the morphism given by
    \begin{equation*}
        (\zgu{n}_X)_{s_1...s_{n-1}} :X_{s_1...s_{n-1}} \rw d p X_{s_1...s_{n-1}}
    \end{equation*}
    for all  $(s_1,...,s_{n-1})\in \dop{n-1}$. Denote by
    \begin{equation*}
        X^d =\di{n}\di{n-1}...\di{1}\p{1}\p{2}...\p{n}X
    \end{equation*}
    and by $\zg\lo{n}$ the composite
    \begin{equation*}
        X\xrw{\zgu{n}}\di{n}\p{n}X \xrw{\di{n}\zgu{n-1}} \di{n}\di{n-1}\p{n-1}\p{n}X \rw \cdots \rw X^d\;.
    \end{equation*}
    For each $a,b\in X_0^d$ denote by $X(a,b)$ the fiber at $(a,b)$ of the map
    \begin{equation*}
        X_1 \xrw{(d_0,d_1)} X_0\times X_0 \xrw{\zg\lo{n}\times\zg\lo{n}} X_0^d\times X_0^d\;.
    \end{equation*}
\end{definition}
\begin{definition}\label{def-hom-dis-ncat-1}
Define inductively $n$-equivalences in $\cathd{n}$. For $n=1$, a 1-equivalence is an equivalence of categories. Suppose we defined $\nm$-equivalences in $\cathd{n-1}$. Then a map $f:X\rw Y$ in $\cathd{n}$ is an $n$-equivalence if, for all $a,b \in X_0^d$, $f(a,b):X(a,b) \rw Y(fa,fb)$ and $\p{n}f$ are $\nm$-equivalences.
\end{definition}
\subsection{Weakly globular $n$-fold categories}\label{weakly-globular} We recall the category of weakly globular \nfol categories and some of its main properties needed in this paper.

\begin{definition}\label{def-n-equiv}
    For $n=1$, $\catwg{1}=\Cat$ and $1$-equivalences are equivalences of categories.

    Suppose, inductively, that we defined $\catwg{n-1}$ and $(n-1)$-equivalences. Then $\catwg{n}$ is the full subcategory of $\funcat{}{\catwg{n-1}}$ whose objects $X$ are such that
    \begin{itemize}
      \item [a)] \textsl{Weak globularity condition} $X_0\in\cathd{n-1}$.\mk
      \item [b)] \textsl{Segal condition} For all $k\geq 2$ the Segal maps are isomorphisms:
      \begin{equation*}
        X_k\cong\pro{X_1}{X_0}{k}\;.
      \end{equation*}

      \item [c)] \textsl{Induced Segal condition} For all $k\geq 2$ the induced Segal maps
      \begin{equation*}
        X_k\rw\pro{X_1}{X^d_0}{k}
      \end{equation*}
      (induced by the map $\zg:X_0\rw X_0^d$) are $(n-1)$-equivalences.\mk

      \item [d)] \textsl{Truncation functor} There is a functor $\p{n}:\catwg{n}\rw\catwg{n-1}$ making the following diagram commute
      \begin{equation*}
        \xymatrix{
        \catwg{n} \ar^{J_n}[rr] \ar_{\p{n}}[d] && \funcat{n-1}{\Cat} \ar^{\ovl p}[d]\\
        \catwg{n-1} \ar^{\Nb{n-1}}[rr]  && \funcat{n-1}{\Set}
        }
    \end{equation*}
    \end{itemize}
    Given $a,b\in X_0^d$, denote by $X(a,b)$ the fiber at $(a,b)$ of the map
    \begin{equation*}
         X_1\xrw{(\pt_0,\pt_1)} X_0\times X_0 \xrw{\zg\times \zg}  X^d_0\times X^d_0\;.
    \end{equation*}
    We say that a map $f:X\rw Y$ in $\catwg{n}$ is an $n$-equivalence if
    \begin{itemize}
      \item [i)] For all $a,b\in X_0^d$
      \begin{equation*}
        f(a,b): X(a,b) \rw Y(fa,fb)
      \end{equation*}
      is an $(n-1)$-equivalence.\mk

      \item [ii)] $\p{n}f$ is an $(n-1)$-equivalence.
    \end{itemize}
    This completes the inductive step in the definition of $\catwg{n}$.
\end{definition}
\begin{remark}\label{rem-n-equiv}
    It follows by Definition \ref{def-n-equiv}, Definition \ref{def-hom-dis-ncat} and \cite[Proposition 3.4]{Pa2} that $\cathd{n}\subset \catwg{n}$.
\end{remark}
\begin{definition}\label{def-kdir-wg-ncat}
Given $X\in\cat{n}$ and $k\geq 0$, let $\Nu{2}X\in\funcat{}{\cat{n-1}}$ as in Definition \ref{def-ner-func-dirk} so that for each $k\geq 0$, $(\Nu{2}X)_k=X_k\up{2}\in \funcat{}{\cat{n-2}}$ is given by
    \begin{equation*}
        (X_k\up{2})_s =
        \left\{
          \begin{array}{ll}
            X_{0k}, & \hbox{$s=0$;} \\
            X_{1k}, & \hbox{$s=1$;} \\
            X_{sk}=\pro{X_{1k}}{X_{0k}}{s}, & \hbox{$s\geq 2$.}
          \end{array}
        \right.
    \end{equation*}
\end{definition}
The following property of weakly globular \nfol categories will be used in Section \ref{sec-cat-lta}.
\begin{proposition}\label{pro-property-WG-nfol}
    The functor $\Nu{2}:\cat{n}\rw \funcat{}{\cat{n-1}}$ restricts to a functor
    \begin{equation*}
        \Nu{2}:\catwg{n} \rw  \funcat{}{\catwg{n-1}}\;.
    \end{equation*}
\end{proposition}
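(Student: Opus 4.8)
The goal is to show that the functor $\Nu{2}$, which on an $n$-fold category $X$ reorganizes the multinerve by taking the nerve in the second simplicial direction only, sends $\catwg{n}$ to $\funcat{}{\catwg{n-1}}$. The plan is to proceed by induction on $n$, using the explicit description of $(\Nu{2}X)_k = X_k\up{2}$ recorded in Definition \ref{def-kdir-wg-ncat}: for $s\geq 2$ one has $(X_k\up{2})_s = \pro{X_{1k}}{X_{0k}}{s}$, while the $s=0,1$ levels are $X_{0k}$ and $X_{1k}$. Fixing $k\geq 0$, I must verify that $X_k\up{2}\in\catwg{n-1}$, i.e.\ check conditions a)--d) of Definition \ref{def-n-equiv} for the simplicial object $s\mapsto (X_k\up{2})_s$ in $\catwg{n-2}$, and then check that the simplicial maps in the $k$-direction are well defined (this is automatic, since $\Nu{2}$ is just a reindexing of a functor already known to land in $\funcat{n}{\Set}$ via $J_n$).

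\textbf{Key steps.} First I would record the base case. For $n=2$, $\catwg{2}$ consists of double categories satisfying the weak globularity, Segal, induced Segal and truncation conditions, and $\Nu{2}X$ is simply $X$ viewed as a simplicial object in $\Cat$ in the second direction; one checks directly that $(X_k\up{2})_s$ for $k$ fixed is a simplicial object in $\Cat$ of the required form, using that $X\in\cat{2}$ means all Segal maps in both directions are isomorphisms and that $X_0\in\cathd{1}$. Second, for the inductive step with $n>2$: condition b) (Segal condition) for $X_k\up{2}$ holds because $(X_k\up{2})_s=\pro{X_{1k}}{X_{0k}}{s}$ by definition, so the Segal maps are identities. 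Condition a) (weak globularity) requires $(X_k\up{2})_0 = X_{0k}\in\cathd{n-2}$; here $X_{0k}=(\Nu{2}X_0)_k$ where $X_0\in\cathd{n-1}$ — so I would invoke the analogue of this proposition for $\cathd{}$, namely that $\Nu{2}$ restricts to $\cathd{n-1}\rw\funcat{}{\cathd{n-2}}$, which follows from Definition \ref{def-hom-dis-ncat} by the same inductive bookkeeping (the $k$-direction groupoidality is preserved under reindexing, and condition (i) in the definition of $\cathd{n-1}$ is exactly what is needed). Condition d) (truncation functor): apply $\p{n-1}$ levelwise to $X_k\up{2}$ and use the commuting squares \eqref{eq2-p-fun-def} and the corresponding square for $\catwg{}$ in Definition \ref{def-n-equiv}d) to see that $\p{n-1}(X_k\up{2})$ agrees with $(\Nu{2}\p{n}X)_k$, which lies in $\catwg{n-2}$ by the inductive hypothesis applied to $\p{n}X\in\catwg{n-1}$. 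Condition c) (induced Segal condition): the induced Segal map of $X_k\up{2}$ at level $s$ is $X_{sk}\rw\pro{X_{1k}}{(X_{0k})^d}{s}$; since the Segal maps are isomorphisms this reduces to showing $\pro{X_{1k}}{X_{0k}}{s}\rw\pro{X_{1k}}{(X_{0k})^d}{s}$ is an $(n-2)$-equivalence, which I would deduce from the fact that $X_{0k}\rw (X_{0k})^d$ is an $(n-2)$-equivalence (as $X_{0k}\in\cathd{n-2}$) together with a lemma that base change along a map from a homotopically discrete object to its discretization preserves $(n-2)$-equivalences after the fiber-product construction — this is the kind of statement used implicitly in \cite{Pa2} and can be reduced to Lemma \ref{lem-char-obj-II}-style pointwise pullback computations.

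\textbf{Main obstacle.} I expect the routine part to be the $s=0,1$ and Segal levels, and the genuine work to be condition c), the induced Segal condition: one has to understand precisely how the discretization $(X_{0k})^d$ relates to $X_0^d$ and why the reindexed induced Segal maps remain $(n-2)$-equivalences. The subtlety is that the discretization operation and the $\Nu{2}$ reindexing must be shown to commute appropriately (so that $(X_k\up{2})^d$-type data match the $k$-level of $X^d$), and that the $(n-1)$-equivalence property of the original induced Segal maps of $X$ — which is a condition about fibers over $X_0^d$ — descends to an $(n-2)$-equivalence statement about the $k$-th level. I would handle this by first proving the clean statement that $\Nu{2}$ commutes with $(-)^d$ and with the formation of fibers $X(a,b)$, reducing everything to the inductive hypothesis and to the already-established preservation properties of $q$ and $p$ (Lemmas \ref{lem-q-pres-fib-pro} and \ref{lem-iso-cla-obj-fib-pro}) under pullback over discrete objects.
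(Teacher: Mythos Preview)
The paper does not actually prove this proposition: its entire proof is the single line ``See \cite[Proposition 3.16 a)]{Pa2}.'' So there is no argument here to compare your proposal against; the result is imported wholesale from the companion paper.

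That said, your inductive outline is the natural shape such a proof takes, and your identification of condition c) (the induced Segal condition for $X_k\up{2}$) as the crux is right. Two comments on the sketch. First, you should make explicit early on that each $(X_k\up{2})_s=X_{sk}$ lies in $\catwg{n-2}$: this follows immediately since $X_s\in\catwg{n-1}$ (as $X\in\funcat{}{\catwg{n-1}}$) and hence $X_{sk}\in\catwg{n-2}$, but it is logically prior to checking a)--d) and you only touch on it implicitly. Second, for condition c) be careful that the $(n-1)$-equivalence $X_s\rw\pro{X_1}{X_0^d}{s}$ does \emph{not} automatically give an $(n-2)$-equivalence at each simplicial level $k$; one really does need to argue via the homotopical discreteness of $X_{0k}$ and a pullback-stability statement along $X_{0k}\rw X_{0k}^d$, as you indicate. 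That step is where the content lives, and it is presumably what the cited proof in \cite{Pa2} works out.
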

\begin{proof}
See \cite[Proposition 3.16 a)]{Pa2}
\end{proof}

\begin{definition}\label{def-pn}
    For each $1\leq j \leq n$ denote
    \begin{align*}
        \p{j,n} & = \p{j}\p{j-1}\cdots \p{n}:\catwg{n}\rw \catwg{j-1}\\
        \p{n,n}& = \p{n}\;.
    \end{align*}
\end{definition}
\begin{lemma}\label{lem-prop-pn}\cite [Lemma 3.8]{Pa2}
    For each $X\in\catwg{n}$, $1\leq j < n$ and $s\geq 2$ it is
    \begin{equation}\label{eq-lem-prop-pn}
    \begin{split}
        &\p{j,n-1}X_s \cong  \p{j,n-1}(\pro{X_1}{X_0}{s})=\\
         & =\pro{\p{j,n-1} X_1}{\p{j,n-1} X_0}{s}\;.
    \end{split}
    \end{equation}
\end{lemma}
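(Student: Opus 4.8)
The plan is to prove \eqref{eq-lem-prop-pn} by reducing it, via the Segal condition for $X\in\catwg{n}$, to the fact that the truncation functors $\p{j}$ preserve the relevant fiber products, and then to argue by induction on the codimension $n-j$. First I would note that the Segal condition (Definition \ref{def-n-equiv} b)) gives the isomorphism $X_s\cong\pro{X_1}{X_0}{s}$ in $\catwg{n-1}$ for $s\geq 2$; applying the functor $\p{j,n-1}$ to both sides yields the first isomorphism in \eqref{eq-lem-prop-pn} immediately. So the real content is the second equality: that $\p{j,n-1}$ carries the iterated fiber product $\pro{X_1}{X_0}{s}$ to $\pro{\p{j,n-1}X_1}{\p{j,n-1}X_0}{s}$, i.e.\ that $\p{j,n-1}$ commutes with these particular limits. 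Since an iterated fiber product over $X_0$ is built from ordinary pullbacks $X_1\tiund{X_0}X_1$, it suffices to show $\p{j,n-1}(A\tiund{X_0}B)\cong \p{j,n-1}A\tiund{\p{j,n-1}X_0}\p{j,n-1}B$ for the maps $d_0,d_1:X_1\rw X_0$ occurring here.

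The key observation making this work is that $X_0\in\cathd{n-1}$ (the weak globularity condition), and for homotopically discrete objects the map $\zg\lo{n-1}:X_0\rw X_0^d$ to the discrete object is, in the appropriate sense, an equivalence; more to the point, after applying enough truncation functors one lands in a discrete (set-like) situation where fiber products are honest fiber products over sets. Concretely I would proceed by downward induction on $j$, or equivalently induction on $n-j$: the base case $j=n-1$ is $\p{n-1}(\pro{X_1}{X_0}{s})\cong\pro{\p{n-1}X_1}{\p{n-1}X_0}{s}$, which follows from the analogue of Lemma \ref{lem-char-obj-II} together with the fact, recorded in Definition \ref{def-n-equiv} d) and Definition \ref{def-hom-dis-ncat}, that $\p{n-1}$ is compatible with the nerve and hence with $p:\Cat\rw\Set$ levelwise; and since $p$ preserves fiber products over discrete objects (this is the content behind Lemma \ref{lem-q-pres-fib-pro} and Lemma \ref{lem-iso-cla-obj-fib-pro}, using that $X_0$, being homotopically discrete, has $\p{n-1}X_0$ discrete after one more truncation and that the relevant maps become identities on objects), the pullback is preserved. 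For the inductive step I would write $\p{j,n-1}=\p{j}\cdots\p{n-1}=\p{j,n-2}\circ\p{n-1}$ (suitably interpreted via Definition \ref{def-pn}), apply the base case to peel off $\p{n-1}$, and then apply the inductive hypothesis in dimension $n-1$ to the resulting object $\p{n-1}X\in\catwg{n-1}$, whose Segal maps are still isomorphisms because $\p{n-1}$ commutes with the nerve $\Nb{n-1}$ by the commuting square in Definition \ref{def-n-equiv} d).

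The step I expect to be the main obstacle is verifying carefully that the hypotheses of the fiber-product-preservation lemmas (Lemma \ref{lem-q-pres-fib-pro} for $q$, and especially Lemma \ref{lem-iso-cla-obj-fib-pro} b) for $p$, which requires one of the two legs to be the identity on objects) are actually met by the maps $d_0,d_1:X_1\rw X_0$ after applying the truncation functors. This is where the homotopical discreteness of $X_0$ is essential: one needs that $\p{j,n-1}$ applied to $X_0$ is discrete (so that the fiber product in question is a fiber product over a discrete object), which follows by iterating the defining property of $\cathd{n-1}$ that $\p{k}$ maps $\cathd{k}$ to $\cathd{k-1}$ and that $\cathd{1}$ consists of equivalence relations whose $p$-image is a single point on each component. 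Once the discreteness is in place, the source-target maps into a discrete object automatically satisfy the needed condition, and the limit-preservation goes through; the remaining bookkeeping — that iterated fiber products are finite limits built from pullbacks, and that all isomorphisms are natural and compatible with the simplicial structure — is routine. In fact, since this is stated as a citation to \cite[Lemma 3.8]{Pa2}, I would expect the cleanest writeup to simply invoke that reference, with the above as the conceptual sketch of why it holds.
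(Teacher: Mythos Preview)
The paper gives no proof of this lemma at all; it simply cites \cite[Lemma 3.8]{Pa2}. Your concluding remark --- that the cleanest writeup is to invoke that reference --- is precisely what the paper does, so in that sense you have matched the paper exactly.

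That said, since you supplied a sketch of the underlying argument, it is worth pointing out a gap in it. Your inductive structure is correct, but the base case argument is both overcomplicated and contains an error. You claim that ``$\p{j,n-1}$ applied to $X_0$ is discrete'', but this is false: since $X_0\in\cathd{n-1}$, iterating the truncation gives $\p{j,n-1}X_0\in\cathd{j-1}$, which is only \emph{homotopically} discrete, not literally discrete. Consequently the appeal to Lemmas \ref{lem-q-pres-fib-pro} and \ref{lem-iso-cla-obj-fib-pro} does not go through as stated: the face maps $d_0,d_1:X_1\rw X_0$ need not be identities on objects after truncation, and the base object is not discrete, so neither lemma's hypothesis is met.

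The clean route, which avoids all of this, is to use condition d) of Definition \ref{def-n-equiv} directly rather than as a side remark. That condition asserts $\p{n}X\in\catwg{n-1}$; in particular $\p{n}X$ itself satisfies the Segal condition, so $(\p{n}X)_s\cong\pro{(\p{n}X)_1}{(\p{n}X)_0}{s}$. Since the commuting square in d) forces $(\p{n}X)_k=\p{n-1}X_k$ for all $k$, this is exactly the desired isomorphism for $j=n-1$. The inductive step then proceeds as you describe: apply the same observation to $\p{n}X\in\catwg{n-1}$, and so on down. No pullback-preservation lemma for $p$ or $q$ is needed; the work is already packaged into the definition of $\catwg{n}$.
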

\begin{remark}\label{rem-eq-def-wg-ncat}
    It follows immediately from Lemma \ref{lem-prop-pn} that if $X\in\catwg{n}$, for all $s\geq 2$
    \begin{equation}\label{eq1-rem-eq-def-wg-ncat}
        X_{s0}^d=(\pro{X_{10}}{X_{00}}{s})^d\cong\pro{X^d_{10}}{X^d_{00}}{s}\;.
    \end{equation}
    In fact, by \eqref{eq-lem-prop-pn} in the case $j=2$, taking the 0-component, we obtain
    \begin{equation*}
    \begin{split}
        & \p{1,n-2}(\pro{X_{10}}{X_{00}}{s})\cong \\
        =\ & \pro{\p{1,n-2}X_{10}}{\p{1,n-2}X_{00}}{s}
    \end{split}
    \end{equation*}
    which is the same as \eqref{eq1-rem-eq-def-wg-ncat}.
\end{remark}
\subsection{Segalic pseudo-functors}\label{segalic-pseudo} We now recall the notion of Segalic pseudo-functor from \cite{Pa2}.

Let $H\in\Ps\funcat{n}{\Cat}$ be such that $H_{\uk(0,i)}$ is discrete for all $\uk\in\Dmenop$ and all $i\geq 0$. Then the following diagram commutes, for each $k_i\geq 2$.
\begin{equation*}
    \xy
    0;/r.8pc/:
    (0,0)*+{H_{\uk}}="1";
    (-7,-5)*+{H_{\uk(1,i)}}="2";
    (-2,-5)*+{H_{\uk(1,i)}}="3";
    (9,-5)*+{H_{\uk(1,i)}}="4";
    (-10,-9)*+{H_{\uk(0,i)}}="5";
    (-5,-9)*+{H_{\uk(0,i)}}="6";
    (0,-9)*+{H_{\uk(0,i)}}="7";
    (6,-9)*+{H_{\uk(0,i)}}="8";
    (12,-9)*+{H_{\uk(0,i)}}="9";
    (3,-5)*+{\cdots}="10";
    (3,-9)*+{\cdots}="11";
    {\ar_{\nu_1}"1";"2"};
    {\ar^{\nu_2}"1";"3"};
    {\ar^{\nu_k}"1";"4"};
    {\ar_{d_1}"2";"5"};
    {\ar^{d_0}"2";"6"};
    {\ar^{d_1}"3";"6"};
    {\ar^{d_0}"3";"7"};
    {\ar_{d_1}"4";"8"};
    {\ar^{d_0}"4";"9"};
    \endxy
\end{equation*}
There is therefore a unique Segal map
\begin{equation*}
    H_{\uk}\rw \pro{H_{\uk(1,i)}}{H_{\uk(0,i)}}{k_i}\;.
\end{equation*}
\begin{definition}\label{def-seg-ps-fun}
    We\; define\; the \;subcategory \; $\segpsc{n}{\Cat}$\; of $\psc{n}{\Cat}$ as follows:

    For $n=1$, $H\in \segpsc{}{\Cat}$ if $H_0$ is discrete and the Segal maps are isomorphisms: that is, for all $k\geq 2$
    \begin{equation*}
        H_k\cong\pro{H_1}{H_0}{k}
    \end{equation*}
    Note that, since $p$ commutes with pullbacks over discrete objects, there is a functor
   \begin{equation*}
   \begin{split}
       & \p{2}:\segpsc{}{\Cat} \rw \Cat\;, \\
       & (\p{2}X)_{k}=p X_k\;.
   \end{split}
   \end{equation*}
   That is the following diagram commutes:
   \begin{equation*}
    \xymatrix{
    \segpsc{}{\Cat} \ar@{^{(}->}[rr]\ar_{\p{2}}[d] && \psc{}{\Cat} \ar^{\ovl{p}}[d]\\
    \Cat \ar[rr] && \funcat{}{\Set}
    }
   \end{equation*}
   When $n>1$, $\segpsc{n}{\Cat}$ is the full subcategory of $\psc{n}{\Cat}$ whose objects $H$ satisfy the following: \mk
   \begin{itemize}
     \item [a)] \emph{Discreteness condition}:  $H_{\uk(0,i)}$ is discrete for all $\uk\in\Dmenop$ and $1 \leq i \leq n$.\mk
     \item [b)] \emph{Segal condition}: All Segal maps are isomorphisms
      \begin{equation*}
      H_{\uk}\cong\pro{H_{\uk(1,i)}}{H_{\uk(0,i)}}{k_i}
      \end{equation*}
      for all $\uk\in\Dmenop$, $1 \leq i \leq n$ and $k_i\geq2$.\mk
     \item [c)] \emph{Truncation functor}: There is a functor
     \begin{equation*}
        \p{n+1}:\segpsc{n}{\Cat}\rw \catwg{n}
     \end{equation*}
   \end{itemize}
    making the following diagram commute:
    \begin{equation*}
        \xymatrix{
        \segpsc{n}{\Cat} \ar@{^(->}[r] \ar_{\p{n+1}}[d] & \psc{n}{\Cat} \ar^{\ovl{p}}[d]\\
        \catwg{n} \ar_{\Nb{n}}[r] & \funcat{n}{\Set}
        }
    \end{equation*}
\end{definition}
\bigskip

The main property of Segalic pseudo-functors is the theorem below stating that that the classical strictification of pseudo-functors, when applied to a Segalic pseudo-functors, yields weakly globular \nfol categories. This result is used crucially in Section \ref{sec-cat-lta} to build the rigidification functor $Q_n$.

\begin{theorem}\cite[Theorem 4.5]{Pa2}\label{the-strict-funct}
    The strictification functor
    \begin{equation*}
        \St  : \psc{n-1}{\Cat}\rw \funcat{n-1}{\Cat}
    \end{equation*}
    restricts to a functor
    \begin{equation*}
        L_n: \segpsc{n-1}{\Cat}\rw J_n\catwg{n}
    \end{equation*}
    where $J_n\catwg{n}$ denotes the image of the fully faithful functor $J_n:\catwg{n}\rw\funcat{n-1}{\Cat}$\;.
    Further, for each $H\in \segpsc{n-1}{\Cat}$ and $\uk\in\dop{n-1}$, the map $(L_n H)_{\uk}\rw H_{\uk}$ is an equivalence of categories.
\end{theorem}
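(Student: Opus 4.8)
The statement to be proven is Theorem \ref{the-strict-funct} (attributed to \cite[Theorem 4.5]{Pa2}), asserting that the classical strictification functor $\St$ restricts along the inclusion $\segpsc{n-1}{\Cat}\subset\psc{n-1}{\Cat}$ to a functor $L_n$ landing in $J_n\catwg{n}$, and that the canonical comparison maps $(L_nH)_{\uk}\rw H_{\uk}$ are equivalences of categories. Since the last clause is already essentially contained in the recollection of the Power--Pultr construction in Section \ref{sbs-pseudo-functors} — there the factorization $TH\xrw{v}L\xrw{g}H$ has each $g_{\uk}$ fully faithful and bijective-on-objects-up-to-equivalence, hence an equivalence of categories — the real content is that when $H$ is \emph{Segalic}, the strictification $L=\St H$ actually lies in $J_n\catwg{n}$; i.e. that $\St H$ satisfies the weak globularity condition, the Segal condition, the induced Segal condition, and admits the truncation functor of Definition \ref{def-n-equiv}.

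\textbf{Main steps.} First I would set up notation: write $H\in\segpsc{n-1}{\Cat}$, let $\St H\in\funcat{n-1}{\Cat}$ be its strictification, and let $g:\St H\rw H$ be the pseudo-natural transformation with each $g_{\uk}$ an equivalence of categories, from Section \ref{sbs-pseudo-functors}. The key observation is that $\St$ is built pointwise from the free algebra $TH$, so the explicit formula $(TH)_{\uk}=\coprod_{\ur}\Dnop(\ur,\uk)\times H_{\ur}$ lets one compute $(\St H)_{\uk}$ — it is the full subcategory of $(TH)_{\uk}$ on a set of objects coming from $H_{\ur}$, with the $g_{\uk}$ recording the canonical comparison. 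I would then check each of the four axioms of Definition \ref{def-n-equiv} in turn:
\begin{itemize}
\item[(a)] \emph{Weak globularity}: $(\St H)_0$ must lie in $\cathd{n-1}$. Because $H$ satisfies the discreteness condition (a), the faces $H_{\uk(0,i)}$ are discrete, so the relevant slices of $(\St H)_0$ are equivalent to discrete categories; one shows directly that $(\St H)_0$, with this structure, is homotopically discrete in the sense of Definition \ref{def-hom-dis-ncat}, using the explicit description of $\St$ and induction on $n$.
\item[(b)] \emph{Segal condition}: the Segal maps of $\St H$ are isomorphisms. Here one uses that $\St$, being a strictification into $\funcat{n-1}{\Cat}$, and the Segal condition (b) for $H$ together force $(\St H)_{\uk}\cong\pro{(\St H)_{\uk(1,i)}}{(\St H)_{\uk(0,i)}}{k_i}$; the point is that the pullbacks here are along maps into the \emph{discrete} objects $(\St H)_{\uk(0,i)}$, and such pullbacks are computed the same way in $\Cat$ as in $\Set$ and are preserved by the bijective-on-objects/fully-faithful factorization. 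This is where Lemma \ref{lem-char-obj} and Lemma \ref{lem-char-obj-II} become the workhorses: they recast "all Segal maps of an object of $\funcat{n-1}{\Cat}$ are isomorphisms" as "the object lies in $\cat{n}$", and show such Segal isomorphisms are stable under the relevant pullbacks.
\item[(c)] \emph{Induced Segal condition}: the induced Segal maps of $\St H$ are $(n-1)$-equivalences. One compares the induced Segal map of $\St H$ with that of $H$ via the levelwise equivalences $g_{\uk}$; since $(\St H)_0^d = H_0$ (the discretization of a homotopically discrete object being computed by $p$, which agrees on $H$ by its truncation axiom (c)), the induced Segal map of $\St H$ is the composite of an honest isomorphism (the ordinary Segal map, by (b)) with a map that is a levelwise equivalence because each $g_{\uk}$ is, hence an $(n-1)$-equivalence.
\item[(d)] \emph{Truncation functor}: $\p{n}$ on $\St H$ is supplied by $\p{n}$ on $H$ (axiom (c) of Definition \ref{def-seg-ps-fun}) transported across $g$; the commuting square is checked by noting $\ovl p\,g = g\,\ovl p$ up to the identification $(\St H)_{\uk}\simeq H_{\uk}$ and that $p$ sends equivalences to isomorphisms (Lemma \ref{lem-q-pres-fib-pro} for groupoidal pieces, or its analogue for $p$).
\end{itemize}

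\textbf{The main obstacle.} The delicate point is step (b), and more precisely the interaction between strictification and pullbacks over discrete objects. Strictification does not in general preserve limits, so one cannot simply say "$\St$ preserves the Segal pullbacks". Instead one must use the \emph{specific} form of the Power--Pultr construction: $(\St H)_{\uk}$ is obtained from $(TH)_{\uk}$ by the bijective-on-objects/fully-faithful factorization of the algebra structure map, and one must verify by hand that this factorization, applied compatibly across the diagram defining the $k_i$-fold Segal pullback, carries the Segal isomorphism of $H$ to one of $\St H$. The saving grace — and the reason the Segalic hypothesis is exactly right — is that the faces being pulled back over are \emph{discrete}, so the pullbacks in question are, levelwise, fibered products of sets amalgamated with discrete data, and on such diagrams one checks directly (as in the proof of Lemma \ref{lem-char-obj-II}, the "shuffle" isomorphism $(x_1,x_2,x_3,x_4)\mapsto(x_1,x_3,x_2,x_4)$) that the bijective-on-objects and fully-faithful parts each pass through the limit. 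Once step (b) is secured, Lemma \ref{lem-char-obj} upgrades "Segal maps are isomorphisms" to "$\St H\in J_n\cat{n}$", and combined with (a), (c), (d) this gives $\St H\in J_n\catwg{n}$; functoriality of $L_n$ is then immediate from functoriality of $\St$, and the final clause about $(L_nH)_{\uk}\rw H_{\uk}$ is exactly the levelwise-equivalence property of $g$ recalled in Section \ref{sbs-pseudo-functors}.
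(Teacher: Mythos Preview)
This theorem is not proved in the present paper: it is quoted verbatim from \cite[Theorem 4.5]{Pa2} and used here as a black box. There is therefore no ``paper's own proof'' to compare your proposal against.

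That said, your overall architecture --- verify axioms (a)--(d) of Definition \ref{def-n-equiv} for $\St H$ using the levelwise equivalences $g_{\uk}:(\St H)_{\uk}\rw H_{\uk}$ supplied by the Power construction --- is the natural one and is almost certainly the shape of the argument in \cite{Pa2}. Your identification of step (b) as the crux is correct. However, there is a slip in your treatment of it: you write that ``the pullbacks here are along maps into the \emph{discrete} objects $(\St H)_{\uk(0,i)}$'', but this is not so. In the factorization $TH\xrw{v}L\xrw{g}H$, the category $L_{\uk(0,i)}$ has object set $\mathrm{ob}(TH)_{\uk(0,i)}=\coprod_{\ur}\dop{n-1}(\ur,\uk(0,i))\times\mathrm{ob}\,H_{\ur}$, which is typically much larger than $H_{\uk(0,i)}$; its morphisms are determined by full faithfulness of $g_{\uk(0,i)}$ into the discrete $H_{\uk(0,i)}$, so $L_{\uk(0,i)}$ is an equivalence relation (homotopically discrete), not discrete. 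Consequently the Segal pullbacks $\pro{L_{\uk(1,i)}}{L_{\uk(0,i)}}{k_i}$ are over a genuinely non-discrete base, and neither Lemma \ref{lem-char-obj-II} nor the ``shuffle'' trick applies as you describe. To close this gap you must argue directly from the explicit description of $L$: on objects the Segal map is the corresponding Segal map for $TH$, which one checks is a bijection because $(TH)_{\uk}$ is a coproduct indexed by morphisms in $\dop{n-1}$ and these decompose compatibly; on morphisms full faithfulness of $g$ reduces the question to the Segal isomorphism for $H$ itself. This is presumably what \cite{Pa2} does, but it is a concrete computation with the monad $T$, not a formal consequence of ``pullbacks over discrete objects''.
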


\section{Weakly globular Tamsamani n-categories}\label{sec-WG-Tam-cat}
In this  section we introduce the category $\tawg{n}$ of weakly globular Tamsamani $n$-categories and discuss their properties. The definition is inductive on dimension starting with $\Cat$ when $n=1$.

 In dimension $n>1$, a weakly globular Tamsamani $n$-category $X$ is a simplicial object in the category $\tawg{n-1}$ satisfying additional conditions. These conditions encode the weakness in the structure in two ways: one is the weak globularity condition, requiring $X_{0}$ to be a homotopically discrete $\nm$-fold category. The second is the induced Segal maps condition, requiring that the induced Segal maps for all $k\geq 2$
\begin{equation}\label{eq1-sec-prelim}
  \hmu{k}:X_k \rw \pro{X_1}{X_0^d}{k}
\end{equation}
are $\nm$-equivalences.

By unravelling the inductive Definition \ref{def-wg-ps-cat} we obtain the embedding
\begin{equation*}
  J_n : \tawg{n}\rw \funcat{n-1}{\Cat}\;.
\end{equation*}
The inductive Definition \ref{def-wg-ps-cat} also requires the existence of a truncation functor
\begin{equation*}
  \p{n}:\tawg{n}\rw \tawg{n-1}
\end{equation*}
obtained by applying levelwise to $J_n X$ the isomorphism classes of object functor $p:\Cat\rw\Set$.

 This truncation functor is used  to define $\nequ$s in $\tawg{n}$. This notion is a higher dimensional generalization of a functor which is fully faithful and essentially surjective on objects. A useful characterization of $n$-equivalences is given in Proposition \ref{pro-n-equiv}.

Under certain conditions, an $\nequ$ $f$ is such that $J_n f$ is a levelwise equivalence of categories (see Proposition \ref{pro-crit-lev-nequiv}) and this will be used in Section \ref{sec-func-qn}.
\begin{definition}\rm\label{def-wg-ps-cat}
    We define the category $\tawg{n}$ by induction on $n$. For $n=1$, $\tawg{1}=\Cat$ and 1-equivalences are equivalences of categories. We denote by $\p{1}=p:\Cat\rw \Set$ the isomorphism classes of object functor.

    Suppose, inductively, that we defined for each $1 < k\leq n-1$
    \begin{equation*}
        \xymatrix{\tawg{k}\;\ar@{^{(}->}^(0.4){}[r] & \;\funcat{k-1}{\Cat}}
    \end{equation*}
    and $k$-equivalences in $\tawg{k}$ as well as a functor
    \begin{equation*}
        \p{k}:\tawg{k}\rw \tawg{k-1}
    \end{equation*}
    sending $k$-equivalences to $(k-1)$-equivalences and making the following diagram commute:
    \begin{equation}\label{eq-wg-ps-cat}
    \xymatrix@C=30pt{
    \tawg{k} \ar^{J_{k}}[rr]\ar_{\p{k}}[d] && \funcat{k-1}{\Cat} \ar^{\ovl{p}}[d]\\
    \tawg{k-1}  \ar^{\Nb{k-1}}[rr] && \funcat{k-1}{\Set}
    }
    \end{equation}
    Define $\tawg{n}$ to be the full subcategory of $\funcat{}{\tawg{n-1}}$ whose objects $X$ are such that
        \begin{itemize}
      \item [a)] \textsl{Weak globularity condition} \;$X_0\in\cathd{n-1}$.\bk

      \item [b)] \textsl{Induced Segal maps condition}. For all $s\geq 2$ the induced Segal maps
      \begin{equation*}
       X_s  \rw \pro{X_1}{X^d_0}{s}
      \end{equation*}
      (induced by the map $\zg:X_0\rw X_0^d$) are $(n-1)$-equivalences.
    \end{itemize}
\medskip
To complete the inductive step, we need to define $\p{n}$ and $n$-equivalences.
Note that the functor
\begin{equation*}
    \ovl{\p{n-1}}:\funcat{}{\tawg{n-1}}\rw \funcat{}{\tawg{n-2}}
\end{equation*}
restricts to a functor
\begin{equation*}
    \p{n}:\tawg{n}\rw \tawg{n-1}\;.
\end{equation*}
In fact, by \eqref{eq-wg-ps-cat} $\p{n-1}$ preserves pullbacks over discrete objects so that
\begin{equation*}
  \p{n-1}(\pro{X_1}{X_0^d}{s}) \cong\pro{\p{n-1}X_1}{(\p{n-1}X_0^d)}{s}\; .
\end{equation*}
 Further, $\p{n-1}X_0^d = (\p{n-1}X_0)^d$ and $\p{n-1}$ sends $(n-1)$-equivalences to $(n-2)$-equivalences.

 Therefore, the induced Segal maps for $s\geq 2$
\begin{equation*}
    X_s \rw\pro{X_1}{X_0^d}{s}
\end{equation*}
being $(n-1)$-equivalences, give rise to $(n-2)$-equivalences
\begin{equation*}
     \p{n-1}X_s \rw \pro{\p{n-1}X_1}{(\p{n-1}X_0)^d}{s}\; .
\end{equation*}
This shows that $\p{n}X \in \tawg{n-1}$. It is immediate that \eqref{eq-wg-ps-cat} holds at step $n$.

Given $a,b\in X_0^d$, denote by $X(a,b)$ the fiber at $(a,b)$ of the map
    \begin{equation*}
         X_1\xrw{(\pt_0,\pt_1)} X_0\times X_0 \xrw{\zg\times \zg}  X^d_0\times X^d_0\;.
    \end{equation*}
    We say that a map $f:X\rw Y$ in $\tawg{n}$ is an $n$-equivalence if
    \begin{itemize}
      \item [i)] For all $a,b\in X_0^d$
      \begin{equation*}
        f(a,b): X(a,b) \rw Y(fa,fb)
      \end{equation*}
      is an $(n-1)$-equivalence.\mk

      \item [ii)] $\p{n}f$ is an $(n-1)$-equivalence.
    \end{itemize}
    This completes the inductive step in the definition of $\tawg{n}$.

\end{definition}
\begin{remark}\label{rem-wg-ps-cat}
    It follows by Definition \ref{def-n-equiv} that $\catwg{n}\subset\tawg{n}$.
\end{remark}
\begin{definition}\label{def-x-tawg-disc}
    An object $X\in\tawg{n}$ is called discrete if $\Nb{n-1}X$ is a constant functor.
\end{definition}
\begin{example}\label{ex-tam}
Tamsamani $n$-categories.
\mk

A special case of weakly globular Tamsamani $n$-category occurs when $X\in\tawg{n}$ is such that $X_0$ and $X_{\oset{r}{1...1}0}$ are discrete for all $1 \leq r \leq n-2$. The resulting category is the category $\Tan$ of Tamsamani's $n$-categories. Note that, if $X\in\Tan$ then $X_s \in \ta{n-1}$ for all $n$, the induced Segal maps $\hmu{s}$ coincide with the Segal maps
\begin{equation*}
  \nu_s:X_s\rw \pro{X_1}{X_0}{s}
\end{equation*}
and $\p{n}X\in\ta{n-1}$. Hence this recovers the original definition of Tamsamani's weak $n$-category \cite{Ta}.

\end{example}
\begin{example}\label{ex-wg-2-3}
    Weakly globular Tamsamani 2-categories.
    \mk

    \nid From the definition, $X\in \tawg{2}$ consists of a simplicial object $X\in\funcat{}{\Cat}$ such that $X_0\in\cathd{}$ and the induced Segal maps
    \begin{equation*}
        \hmuk:X_k \xrw{\muk}\pro{X_1}{X_0}{k}\xrw{\nu_k}\pro{X_1}{X^d_0}{k}
    \end{equation*}
    are equivalences of categories.

    The existence of the functor $\p{2}:\tawg{2}\rw \Cat$ making diagram \eqref{eq-wg-ps-cat} commute is in this case automatic so this condition does need to be imposed as part of the definition. In fact, since $p:\Cat\rw \Set$ sends equivalences to isomorphisms and preserves pullbacks over discrete objects, we obtain isomorphisms for each $k\geq 2$
    \begin{flalign*}
        p\hmuk: p X_k & \cong p(\pro{X_1}{X_0^d}{k})\cong &\\
        & \cong \pro{p X_1}{p X_{0}^{d}}{k}\cong \pro{p X_1}{(p X_{0})^{d}}{k}\;.&
    \end{flalign*}
    Since $p X_k =(\p{2}X)_k$ for all $k$,  the Segal maps for the simplicial set $\bar p X$ are isomorphisms. Therefore $\bar p X$ is the nerve of a category. We therefore have a functor $\p{2}$ given as composite
    \begin{equation*}
        \p{2}:\tawg{2}\xrw{\bar p} \mbox{\sf{Ner}} \Cat \subset \funcat{}{\Set}\xrw{P}\Cat
    \end{equation*}
    where $\mbox{\sf{Ner}} \Cat$ is the full subcategory of $\funcat{}{\Set}$ consisting of nerves of categories and $P$ is the left adjoint to the nerve functor $N:\Cat\rw\funcat{}{\Set}$.
    \end{example}
\begin{notation}\label{not-ex-tam}
    For each $1 \leq j\leq n-1$ denote\mk

      $\begin{array}{ll}
        \p{j,n}=\p{j}\p{j+1}...\p{n}:\tawg{n}\rw \tawg{j-1},\\
        \p{n,n}=\p{n}.
      \end{array}$
\end{notation}
In the following lemma we give a criterion for a weakly globular Tamsamani $n$-category to be in $\catwg{n}$.
\begin{lemma}\label{lem-x-in-tawg-x-in-catwg}
  Let $X\in\tawg{n}$ be such that
      \begin{itemize}
      \item [a)] $X_s\in \catwg{n-1}$ for all $s$.\mk

      \item [b)] $X_s \cong \pro{X_1}{X_0}{s}$ for all $s\geq 2$.\mk

      \item [c)] For all $s\geq 2$ and $1\leq j\leq n-1$
          \begin{equation}\label{eq-lem-eq-def-wg}
          \begin{split}
              & \p{j,n-1}X_s \cong  \p{j,n-1}(\pro{X_1}{X_0}{s})= \\
             = \ &\pro{\p{j,n-1} X_1}{\p{j,n-1} X_0}{s}
          \end{split}
          \end{equation}
    \end{itemize}
    then $X\in\catwg{n}$.
\end{lemma}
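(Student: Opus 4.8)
The plan is to verify, by induction on $n$, the four defining conditions of $\catwg{n}$ from Definition \ref{def-n-equiv} for the object $X$: the weak globularity condition (a), the Segal condition (b), the induced Segal condition (c), and the existence of the truncation functor (d). Conditions (a) and (b) of $\catwg{n}$ are immediate: weak globularity $X_0\in\cathd{n-1}$ is part of the hypothesis that $X\in\tawg{n}$, and the strict Segal isomorphisms $X_s\cong\pro{X_1}{X_0}{s}$ are exactly hypothesis b) of the lemma. So the real content is verifying the induced Segal condition and producing the truncation functor, and—crucially—checking that each $X_s$ actually lies in $\catwg{n-1}$ (hypothesis a) of the lemma only asserts $X_s\in\catwg{n-1}$, but we must also know the whole simplicial diagram assembles correctly, which is where hypothesis c) enters).

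First I would handle the induced Segal maps. Since $X\in\tawg{n}$, the induced Segal maps $\hmu{s}:X_s\rw\pro{X_1}{X_0^d}{s}$ are $(n-1)$-equivalences. But under hypothesis b), the Segal map $\mu_s:X_s\rw\pro{X_1}{X_0}{s}$ is an isomorphism, and $\hmu{s}$ factors as $\mu_s$ followed by the map $\pro{X_1}{X_0}{s}\rw\pro{X_1}{X_0^d}{s}$ induced by $\zg:X_0\rw X_0^d$. Hence this latter map is an $(n-1)$-equivalence, which is precisely condition c) of $\catwg{n}$ for $X$. For the truncation functor, I would apply $\p{n}$ (which exists on all of $\tawg{n}$) and use hypothesis c) of the lemma with $j=1$: the identity $\p{1,n-1}X_s\cong\pro{\p{1,n-1}X_1}{\p{1,n-1}X_0}{s}$, together with the commuting square \eqref{eq-wg-ps-cat} relating $\p{n}$ to $\ovl p$ and the nerve, shows that $\Nb{n-1}\p{n}X$ has all Segal maps isomorphisms, so $\p{n}X$ lands in the appropriate subcategory and diagram d) of Definition \ref{def-n-equiv} commutes. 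This is essentially the same computation already carried out for $\tawg{2}$ in Example \ref{ex-wg-2-3}, promoted to general $n$ via Lemma \ref{lem-prop-pn}.

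The step I expect to be the main obstacle is showing that the simplicial object $X$, viewed via $J_n$ as an object of $\funcat{n-1}{\Cat}$, actually factors through $J_n\catwg{n}$—i.e. that it is a genuine weakly globular $n$-fold category and not merely a Tamsamani-type object whose levels happen to be in $\catwg{n-1}$. Here I would invoke Lemma \ref{lem-char-obj}: one needs that all Segal maps of $J_n X\in\funcat{n-1}{\Cat}$ in every simplicial direction are isomorphisms. In the last (outermost) direction this is hypothesis b); in the remaining $n-2$ directions it follows from hypothesis a) (each $X_s\in\catwg{n-1}\subset\cat{n-1}$, so $J_{n-1}X_s$ has Segal isomorphisms in all its directions) combined with the compatibility across the simplicial levels, which is exactly what the pullback-preservation identities \eqref{eq-lem-eq-def-wg} of hypothesis c) guarantee (cf. Lemma \ref{lem-char-obj-II} for how Segal conditions propagate through pullbacks). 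Once $J_n X\in\cat{n}$ is established, conditions (a)--(d) above identify it as an object of $\catwg{n}$.

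Finally I would assemble these: by induction, each $X_s\in\catwg{n-1}$ together with the Segal and compatibility conditions places $X$ in $\catwg{n}$ provided the truncation functor restricts correctly, which was checked above. I would remark that the inductive hypothesis is used twice—once to know $X_s\in\catwg{n-1}$ is a meaningful statement, and once to know that $\p{n-1}$ behaves well on pullbacks over discrete objects and sends $(n-1)$-equivalences to $(n-2)$-equivalences, as recorded in the inductive construction of $\tawg{n}$ following Definition \ref{def-wg-ps-cat}. The verification is therefore a matter of bookkeeping against Definition \ref{def-n-equiv}, with Lemma \ref{lem-char-obj} doing the one nontrivial piece of work.
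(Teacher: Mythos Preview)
Your treatment of conditions (a), (b), and (c) of Definition~\ref{def-n-equiv} is fine (and in fact the induced Segal condition needs no factoring argument at all: it is already part of the hypothesis $X\in\tawg{n}$, exactly as the paper observes). The detour through Lemma~\ref{lem-char-obj} and Lemma~\ref{lem-char-obj-II} is also unnecessary: once hypothesis~a) gives $X_s\in\catwg{n-1}\subset\cat{n-1}$ and hypothesis~b) gives the Segal isomorphisms in direction~1, you have $X\in\Cat(\cat{n-1})=\cat{n}$ immediately.

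There is, however, a genuine gap in your verification of condition~d). What you actually need is $\p{n}X\in\catwg{n-1}$, and your argument (taking $j=1$ in hypothesis~c) to get Segal isomorphisms for $\Nb{n-1}\p{n}X$) only shows $\p{n}X\in\cat{n-1}$. That is strictly weaker: being an $(n-1)$-fold category does not give you weak globularity of $(\p{n}X)_0$, the induced Segal condition for $\p{n}X$, or---most importantly---that $\p{n-1}\p{n}X\in\catwg{n-2}$, which is itself part of the inductive definition of $\catwg{n-1}$. So your ``$\p{n}X$ lands in the appropriate subcategory'' is precisely the assertion that still needs proof.

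The paper closes this gap by using the inductive hypothesis in a way you did not anticipate: it shows that $\p{n}X$ itself satisfies the three hypotheses a), b), c) of the lemma at level $n-1$. Hypothesis~a) for $\p{n}X$ follows because $(\p{n}X)_s=\p{n-1}X_s\in\catwg{n-2}$ (using $X_s\in\catwg{n-1}$); hypothesis~b) for $\p{n}X$ is exactly your hypothesis~c) with $j=n-1$; and hypothesis~c) for $\p{n}X$ is derived from hypothesis~c) for $X$ at the remaining values $1\leq j\leq n-2$ by writing $\p{j,n-2}(\p{n}X)_s=\p{j,n-1}X_s$. The inductive hypothesis then yields $\p{n}X\in\catwg{n-1}$ directly. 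This is where the full strength of hypothesis~c) for \emph{all} $1\leq j\leq n-1$ is actually used; your proposal only consumed the case $j=1$.
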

\begin{proof}
By induction on $n$. When $n=2$, let $X\in\tawg{2}$ satisfy a), b), c). Then $X_0\in\cathd{}$ and by b), $X\in\cat{2}$. Since $X\in\tawg{2}$ the induced Segal maps
\begin{equation*}
  X_s \rw \pro{X_1}{X_0^d}{s}
\end{equation*}
are equivalences of categories for all $s\geq 2$. Thus, by definition, $X\in\catwg{2}$.

Suppose, inductively, that the lemma holds for $(n-1)$ and let $X\in\tawg{n}$ satisfy a), b), c). Then $X_0\in\cathd{n-1}$, $X_k\in\catwg{n-1}$ for all $k\geq 0$ and, since $X\in\tawg{n}$, the induced Segal maps
\begin{equation*}
  X_s \rw \pro{X_1}{X_0^d}{s}
\end{equation*}
are $(n-1)$-equivalences for all $s\geq 2$.

By hypothesis b) and the definition of $\catwg{n}$, to show that $X\in\catwg{n}$ it is enough to prove that $\p{n}X\in\catwg{n-1}$. We do so by proving that $\p{n-1}X$ satisfies the inductive hypothesis.

For all $s\geq 0$, $(\p{n}X)_s=\p{n-1}X_s\in\catwg{n-2}$ since, by hypothesis a), $X_s\in\catwg{n-1}$. Thus $\p{n}X$ satisfies the inductive hypothesis a). Also, from hypothesis c) for $X$ in the case $j=n-1$  for all $s\geq 2$,
\begin{equation*}
\begin{split}
    & \p{n-1}X_s \cong\p{n-1}(\pro{X_1}{X_0}{s})\cong \\
   \cong \ & \pro{\p{n-1}X_1}{\p{n-1}X_0}{s}
\end{split}
\end{equation*}
that is, $\p{n}X$ satisfies inductive hypothesis b). From this and from hypothesis c) for $X$ we deduce, for all $s\geq 2$ and $1\leq j\leq n-2$
\begin{align*}
    & \p{j,n-2}(\pro{(\p{n}X)_1}{(\p{n}X)_0}{s})=\\
    = \ & \p{j,n-2}(\pro{\p{n-1}X_1}{\p{n-1}X_0}{s})=\\
    = \ & \p{j,n-2}\p{n-1}(\pro{X_1}{X_0}{s})=\\
    = \ & \pro{\p{j,n-1}X_1}{\p{j,n-1}X_0}{s}=\\
    = \ & \pro{\p{j,n-2}(\p{n}X)_1}{\p{j,n-2}(\p{n}X)_0}{s}\;.
\end{align*}
This shows that $\p{n}X$ satisfies inductive hypothesis c). Hence we conclude that $\p{n}X\in\catwg{n-1}$, as required.

\end{proof}
\begin{lemma}\label{lem-flevel-fneq}
    Let $f:X\rw Y$ in $\tawg{n}$ be a levelwise $(n-1)$-equivalence in $\tawg{n-1}$. Then $f$ is an $n$-equivalence.
\end{lemma}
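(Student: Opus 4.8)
The plan is to prove this by induction on $n$, using the characterization of $n$-equivalences in Definition \ref{def-wg-ps-cat} (parts i) and ii)). For $n=1$ the statement is a tautology, since a levelwise equivalence of categories indexed over a single point is just an equivalence of categories, which is a $1$-equivalence. For the inductive step, suppose the result holds for $(n-1)$ and let $f\colon X\rw Y$ be a map in $\tawg{n}$ which is a levelwise $(n-1)$-equivalence, meaning $f_s\colon X_s\rw Y_s$ is an $(n-1)$-equivalence in $\tawg{n-1}$ for every $s\in\Dop$. I must verify conditions i) and ii) of the definition of $n$-equivalence for $f$.

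For condition ii), I would observe that $\p{n}f$ is obtained by applying $\bar{p}$ (equivalently, applying $p\colon\Cat\rw\Set$ and $P$ levelwise in the multisimplicial directions) to $J_n f$. Since each $f_s$ is an $(n-1)$-equivalence, $\p{n-1}f_s = (\p{n}f)_s$ is an $(n-2)$-equivalence by the inductive property of $\p{n-1}$ recorded in diagram \eqref{eq-wg-ps-cat}; thus $\p{n}f\colon \p{n}X\rw\p{n}Y$ is a levelwise $(n-2)$-equivalence in $\tawg{n-1}$, hence an $(n-1)$-equivalence by the inductive hypothesis applied in dimension $n-1$. For condition i), I need that for all $a,b\in X_0^d$ the induced map $f(a,b)\colon X(a,b)\rw Y(fa,fb)$ on fibers is an $(n-1)$-equivalence. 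The map $f(a,b)$ is the restriction of $f_1\colon X_1\rw Y_1$ to the fibers of $X_1\rw X_0^d\times X_0^d$ and $Y_1\rw Y_0^d\times Y_0^d$. Here the key point is that $f_0\colon X_0\rw Y_0$ is an $(n-1)$-equivalence in $\tawg{n-1}$ between homotopically discrete objects (by the weak globularity condition $X_0,Y_0\in\cathd{n-1}$), and such an equivalence induces a bijection $X_0^d\cong Y_0^d$ on discretizations, compatibly with $f_0$ and the maps $\gamma$; hence $f(a,b)$ is, up to this identification, exactly the map induced by $f_1$ on the corresponding fibers, which I claim is an $(n-1)$-equivalence.

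The main obstacle — and the step I would spend the most care on — is precisely this last claim: that the restriction of an $(n-1)$-equivalence $f_1$ to fibers over (discrete) objects is again an $(n-1)$-equivalence. This requires knowing that $n$-equivalences are detected fiberwise and behave well under pullback along maps into discrete objects; I would expect this to be either a standalone lemma proved in the paper before this point (the remark after Lemma \ref{lem-flevel-fneq} in Section \ref{sec-WG-Tam-cat} strongly suggests such machinery, e.g. a criterion like Proposition \ref{pro-n-equiv}), or to follow by a further induction unwinding the definition of $(n-1)$-equivalence on the fibers $X_1(a,b)$ and iterating. In writing the argument I would cite the relevant fiber-wise characterization of $(n-1)$-equivalences and the fact (from the theory of $\cathd{n-1}$ and the map $\gamma\lo{n-1}$) that an $(n-1)$-equivalence between homotopically discrete objects is an isomorphism on discretizations; granting these, conditions i) and ii) both follow and the induction closes.
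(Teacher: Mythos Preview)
Your inductive structure and your handling of condition ii) match the paper's proof exactly: $\p{n}f$ is a levelwise $(n-2)$-equivalence and the inductive hypothesis applies. The paper starts the induction at $n=2$ rather than $n=1$, but that is inessential.

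The gap is in condition i). You correctly isolate the issue --- passing from ``$f_1$ is an $(n-1)$-equivalence'' to ``each $f(a,b)$ is an $(n-1)$-equivalence'' --- but your proposed resolution does not work as stated. Citing Proposition~\ref{pro-n-equiv} is circular in the paper's order (it comes after this lemma), and in any case that proposition gives composition/cancellation and a ``local $(n-1)$-equivalence $+$ $\p{1,n}$ surjective'' criterion, not a statement that $(n-1)$-equivalences restrict to fibers over discrete objects. Likewise Lemma~\ref{lem-crit-lev-nequiv} on pullbacks over discrete objects also appears later. So you cannot simply invoke a fiberwise or pullback-stability lemma here.

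The paper's device is more elementary and avoids any such citation: since $X_0^d\times X_0^d$ and $Y_0^d\times Y_0^d$ are \emph{discrete}, one has coproduct decompositions
\[
X_1=\coprod_{a,b\in X_0^d} X(a,b),\qquad
Y_1=\coprod_{a',b'\in Y_0^d} Y(a',b')\;\cong\;\coprod_{a,b\in X_0^d} Y(fa,fb),
\]
the second isomorphism using the bijection $X_0^d\cong Y_0^d$ that you already noted. The map $f_1$ respects these decompositions, so the statement reduces to: an $(n-1)$-equivalence between coproducts indexed by the same set, which is a coproduct of maps, is componentwise an $(n-1)$-equivalence. That is immediate by unwinding the inductive definition (hom-objects of a coproduct lie in a single summand, and $\p{n-1}$, being computed levelwise by the left adjoint $p$, commutes with coproducts). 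This is the missing concrete idea; once you use the coproduct decomposition, no forward reference is needed.
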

\begin{proof}
By induction on $n$. Let $n=2$. If $f_0$ is an equivalence of categories, $X_0^d\cong Y_0^d$. Hence
\begin{align}\label{eq1-lem-flevel-fneq}
    & Y_1=\uset{a',b'\in Y_0^d}{\coprod} Y(a',b') \cong \uset{fa,fb\in Y_0^d}{\coprod} Y(fa,fb)\;.
\end{align}
\begin{align}\label{eq2-lem-flevel-fneq}
    & X_1=\uset{a,b\in X_0^d}{\coprod}X(a,b)\;.
\end{align}

Since $f_1$ is an equivalence of categories it follows from \eqref{eq1-lem-flevel-fneq} and \eqref{eq2-lem-flevel-fneq} that $f(a,b)$ is an equivalence of categories. Further, $f_k$ is an equivalence of categories for all $k\geq 0$ so that $p f_k=(\p{2}f)_k$ is an isomorphism, hence $\p{2}f$ is an isomorphism; we conclude that $f$ is a 2-equivalence.

Suppose the lemma holds for $(n-1)$ and let $f$ be as in the hypothesis. Since $f_0$ is a $(n-1)$-equivalence in $\cathd{n-1}$, $X_0^d \cong Y_0^d$, so that \eqref{eq1-lem-flevel-fneq} holds. Since $f_1$ is a $(n-1)$-equivalence  it follows from \eqref{eq1-lem-flevel-fneq} and \eqref{eq2-lem-flevel-fneq} that $f(a,b)$ is a $(n-1)$-equivalence for all $a,b\in X_0^d$.

Since $f_k$ is an $(n-1)$-equivalence for all $k\geq 0$, $\p{n-1}f_k=(\p{n}f)_k$ is a $(n-2)$-equivalence. So $\p{n}f$ satisfies the induction hypothesis and is therefore a $(n-1)$-equivalence. In conclusion, $f$ is an $n$-equivalence.
\end{proof}
\begin{remark}\label{rem-local-equiv}
    Applying inductively Lemma \ref{lem-flevel-fneq} it follows immediately that if a morphism $f$ in $\tawg{n}$ is such that $J_n f$ is a levelwise equivalence of categories, then $f$ is an $n$-equivalence.
\end{remark}
\begin{definition}\label{def-local-equiv}
    A morphism $f:X\rw Y$ in $\tawg{n}$ is said to be a local $\nm$-equivalence if for all $a,b \in X_0^d$, $f(a,b):X(a,b)\rw Y(fa,fb)$ is a $\nm$-equivalence in $\tawg{n-1}$.
\end{definition}
\nid In the following proposition, we give a useful description of $n$-equiva-lences in $\tawg{n}$.
\begin{proposition}\label{pro-n-equiv}\

\sk
\begin{itemize}
  \item [a)] Let $f$ be a morphism in $\tawg{n}$ which is an $\nequ$. Then $f$ is a local $(n-1)$-equivalence and $\p{1,n} f$ is an isomorphism.

  \item [b)]  Let $f$ be a morphism  in $\tawg{n}$ which is a local $(n-1)$-equivalence and is such that $\p{1,n} f$ is surjective. Then $f$ is an $\nequ$.

  \item [c)] Let $X\xrw{g} Z \xrw{h} Y$ be morphisms in $\tawg{n}$, $f= hg$ and suppose that $f$ and $h$ are $n$-equivalences. Then $g$ is an $n$-equivalence.

  \item [d)]  Let $X\xrw{g} Z \xrw{h} Y$ be morphisms in $\tawg{n}$, $f= hg$ and suppose that $g$ and $h$ are $n$-equivalences. Then $f$ is an $n$-equivalence.

  \item [e)]  Let $X\xrw{g} Z \xrw{h} Y$ be morphisms in $\tawg{n}$, $f= hg$ and let $g_0^d: X_0^d \rw Z_0^d$ be surjective; suppose that $f$ and $g$ are $n$-equivalences. Then $h$ is an $n$-equivalence.

\end{itemize}
\end{proposition}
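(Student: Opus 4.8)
The plan is to argue by induction on $n$. The base case $n=1$ reduces to standard facts about $\Cat$: an equivalence of categories is fully faithful and induces a bijection on isomorphism classes of objects (and conversely a fully faithful, essentially surjective functor is an equivalence), and equivalences of categories satisfy the full two-out-of-three property. The crucial point of the inductive step is that, once (a) and (b) are proved, they give a workable characterization: a map $f$ in $\tawg{n}$ is an $\nequ$ if and only if it is a local $(n-1)$-equivalence and $\p{1,n}f$ is a bijection; moreover in the ``if'' direction surjectivity of $\p{1,n}f$ already suffices. All of (c), (d), (e) are then deduced from this characterization by two-out-of-three arguments in $\Set$ and in $\tawg{n-1}$.

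For (a): the ``local $(n-1)$-equivalence'' assertion is exactly clause (i) in the definition of $\nequ$, while $\p{1,n}f=\p{1,n-1}(\p{n}f)$ is an isomorphism by the inductive hypothesis (a) applied to the $(n-1)$-equivalence $\p{n}f$. For (b): clause (i) is the hypothesis, so it remains to see that $\p{n}f$ is an $(n-1)$-equivalence, which we get from the inductive hypothesis (b) applied to $\p{n}f$. Indeed, from the coproduct decomposition $X_1\cong\coprod_{a,b\in X_0^d}X(a,b)$ and the fact that $\p{n-1}$ preserves coproducts one obtains $(\p{n}X)(a,b)\cong\p{n-1}(X(a,b))$, and since $\p{n-1}$ carries $(n-1)$-equivalences to $(n-2)$-equivalences, $\p{n}f$ is a local $(n-2)$-equivalence; also $\p{1,n-1}(\p{n}f)=\p{1,n}f$ is surjective by hypothesis, so (b) at level $n-1$ applies.

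For (c) and (d): applying $\p{1,n}$ to $f=hg$ gives $\p{1,n}f=(\p{1,n}h)\circ(\p{1,n}g)$ in $\Set$, so two-out-of-three for bijections of sets yields that $\p{1,n}g$ (in (c)), respectively $\p{1,n}f$ (in (d)), is a bijection. For the local condition, the fibre description gives, for each $a,b\in X_0^d$, a factorization $f(a,b)=h(ga,gb)\circ g(a,b)$ in $\tawg{n-1}$; the inductive hypothesis (c) (resp. (d)) applied to this composite then shows that $g(a,b)$ (resp. $f(a,b)$) is an $(n-1)$-equivalence, i.e. that $g$ (resp. $f$) is a local $(n-1)$-equivalence. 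Part (b) now gives the conclusion.

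For (e): since $\p{1,n}f$ and $\p{1,n}g$ are bijections, $\p{1,n}h$ is a bijection, hence surjective. That $\p{n}h$ is an $(n-1)$-equivalence follows from the inductive hypothesis (e) applied to $\p{n}f=(\p{n}h)\circ(\p{n}g)$, using that $(\p{n}X)_0^d$ is canonically identified with $X_0^d$ so that $(\p{n}g)_0^d$ is again surjective. It remains to show $h(c,d)$ is an $(n-1)$-equivalence for every $c,d\in Z_0^d$; here surjectivity of $g_0^d$ is used to write $c=ga$, $d=gb$ for some $a,b\in X_0^d$, and then $f(a,b)=h(c,d)\circ g(a,b)$ with $f(a,b)$ and $g(a,b)$ both $(n-1)$-equivalences, so one wants to invoke the inductive hypothesis (e) at level $n-1$. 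This last step is where I expect the real difficulty to lie: applying (e) at level $n-1$ to the hom-level composite requires $(g(a,b))_0^d$ to be surjective, and this is not a formal consequence of $g$ being an $\nequ$ (an $m$-equivalence need not be surjective on discretized objects, just as an equivalence of categories need not be surjective on objects). Making the induction close therefore requires tracking how the surjectivity of $g_0^d$, together with the weakly globular structure, forces enough surjectivity on the successive hom-objects — the recursion does bottom out harmlessly in dimension $1$, where two-out-of-three for equivalences of categories holds unconditionally, but the bookkeeping of surjectivity across the intermediate dimensions is the substantive part of the argument.
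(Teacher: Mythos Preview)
Your treatment of (a)--(d) is essentially identical to the paper's: same induction, same use of the identity $(\p{n}f)(a,b)=\p{n-1}f(a,b)$ for (b), same factorization $f(a,b)=h(ga,gb)\circ g(a,b)$ together with two-out-of-three on $\p{1,n}$ for (c) and (d), and then an appeal to (b) to conclude.

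For (e) your route is also the paper's route, with one harmless redundancy and one genuinely flagged issue. The redundancy: you separately argue that $\p{n}h$ is an $(n-1)$-equivalence via the inductive (e) applied to $\p{n}f=(\p{n}h)(\p{n}g)$; the paper does not do this, and it is unnecessary once you have shown that $h$ is a local $(n-1)$-equivalence and that $\p{1,n}h$ is a bijection, since (b) already finishes. Your observation that $(\p{n}g)_0^d$ coincides with $g_0^d$ is correct, so this extra step is valid, just superfluous.

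The substantive point is the one you single out: to get that $h(ga,gb)$ is an $(n-1)$-equivalence from the composite $f(a,b)=h(ga,gb)\circ g(a,b)$ you appeal to the inductive hypothesis (e) at level $n-1$, which formally demands that $(g(a,b))_0^d$ be surjective, and this is not a consequence of $g$ being an $n$-equivalence. The paper's proof of (e) proceeds in exactly the same way but does \emph{not} isolate or address this point: at the corresponding step it simply writes ``by induction hypothesis and by \eqref{eq1-pro-n-equiv}, $h(ga,gb)$ is a $(n-1)$-equivalence'' and moves on. So the difficulty you identify is real and is present, unremarked, in the paper's own argument; you have been more careful than the source in naming it. (For $n=2$ there is no problem, since the hom-level statement is ordinary two-out-of-three for equivalences of categories; the issue is in justifying the passage from level $n-1$ to level $n$ for $n\geq 3$.)
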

\begin{proof}
By induction on $n$. It is clear for $n=1$. Suppose it is true for $n-1$.

a) Let $f:X\rw Y$ be an $\nequ$ in $\tawg{n}$. Then, by definition, $f$ is a local $(n-1)$-equivalence and $\p{n}f$ is a $\equ{n-1}$. Therefore, by induction hypothesis applied to $\p{n}f$, $\p{1,n} f$ is an isomorphism.

\mk
b) Suppose that $f:X\rw Y$ is a local $(n-1)$-equivalence in $\tawg{n}$ and $\p{1,n}f$ is surjective. To show that $f$ is a $\nequ$ we need to show that $\p{n}f$ is a $\equ{n-1}$. For each $a,b\in X_0^d$
\begin{equation*}
    (\p{n}f)(a,b)=\p{n-1}f(a,b)
\end{equation*}
Since $f(a,b)$ is a $(n-1)$-equivalence, $\p{n-1}f(a,b)$ is a $(n-2)$-equivalence; that is, $\p{n}f$ is a local $(n-2)$-equivalence.

Since $\p{1,n} f = \p{1,n-1}\p{n} f$ is surjective, by inductive hypothesis applied to $\p{n}f$ we conclude that $\p{n}f$ is a $\equ{n-1}$ as required.

\mk
c) For all $a,b\in X_0^d$,
\begin{equation}\label{eq1-pro-n-equiv}
    f(a,b)=h(ga,gb)g(a,b)
\end{equation}
with $f(a,b)$ and $h(ga,gb)$ \equ{n-1}s. By inductive hypothesis, $g(a,b)$ is therefore a $\equ{n-1}$.

By hypothesis and by part a), $\p{1,n} f$ and $\p{1,n} h$ are isomorphisms. Since
\begin{equation}\label{eq2-pro-n-equiv}
    \p{1,n} f=(\p{1,n} h)(\p{1,n} g)
\end{equation}
it follows that $\p{1,n} g$ is an isomorphism, hence in particular it is surjective. By part b), this implies that $g$ is a $\equ{n-1}$.
\mk

d) Suppose that $h$ and $g$ are $\nequ$s. By \eqref{eq1-pro-n-equiv}, $f$ is a local $(n-1)$-equivalence and by \eqref{eq2-pro-n-equiv} $\p{1,n} f$ is an isomorphism. By b), $f$ is thus a $\nequ$.
\mk

e) By hypothesis, for each $a',b'\in Z_0^d$, $a'=ga$, $b'=gb$ for $a,b\in X_0^d$. It follows that $h(a',b')=h(ga,gb)$. Since, by induction hypothesis and by \eqref{eq1-pro-n-equiv}, $h(ga,gb)$ is a $\nm$-equivalence, it follows that such is $h(a',b')$. That is, $h$ is a local equivalence.

By hypothesis and by part a), $\p{1,n} f$ and $\p{1,n} g$ are isomorphisms, so by \eqref{eq2-pro-n-equiv}, such is $\p{1,n} h$. We conclude by part b) that $h$ is a $\nequ$.
\end{proof}
\begin{lemma}\label{lem-crit-lev-nequiv}
    Consider the diagram in $\tawg{n}$
    \begin{equation*}
    \xymatrix{
    X \ar^{f}[r]\ar_{\za}[d] & Z \ar@{=}[d] & Y \ar_{g}[l]\ar^{\zb}[d]\\
    X' \ar_{f'}[r] & Z & Y' \ar^{g'}[l]
    }
    \end{equation*}
    with $Z$ discrete. Then:
    \begin{itemize}
      \item [a)] $X\tiund{Z} Y\,,\,X'\tiund{Z'} Y'\in\tawg{n}$, where the pullback are taken in $\funcat{n-1}{\Cat}$.\mk

      \item [b)] $\p{n}(X\tiund{Z}Y)\cong \p{n}X\tiund{\p{n}Z}\p{n}Y$.\mk

      \item [c)] If $\za,\,\zb$ are $n$-equivalences such is
      \begin{equation*}
        (\za,\zb):X\tiund{Z} Y\rw X'\tiund{Z'} Y'\;.
      \end{equation*}
    \end{itemize}
\end{lemma}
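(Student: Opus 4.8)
The plan is to prove a), b) and c) by a single simultaneous induction on $n$, the three parts interlocking across dimensions: part c) in dimension $n-1$ is exactly what verifies the induced Segal maps condition for the pullback in dimension $n$, and part b) handles the truncation functor. For $n=1$ all three are elementary: a pullback of categories over a discrete category (a set) is again a category; b) is the fact, already used in the text, that $p$ preserves pullbacks over discrete objects; and c) is a direct check, since all hom-sets and isomorphism-sets in the discrete $Z$ are trivial, that the induced functor $(\za,\zb)$ is fully faithful and essentially surjective whenever $\za,\zb$ are. So assume a), b), c) hold in dimension $n-1$.

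For a), put $W=X\tiund{Z}Y$, computed pointwise in $\funcat{n-2}{\Cat}$, so $W_s=X_s\tiund{Z_0}Y_s$ with $Z_s=Z_0$ discrete since $Z$ is (Definition \ref{def-x-tawg-disc}). By the inductive hypothesis a), each $W_s\in\tawg{n-1}$, so $W$ is a simplicial object in $\tawg{n-1}$ and only the two defining conditions need checking. Weak globularity holds since $W_0=X_0\tiund{Z_0}Y_0\in\cathd{n-1}$, using that $\cathd{n-1}$ is closed under pullbacks over discrete objects (proved by an analogous, easier induction; cf.\ \cite{Pa1}). For the induced Segal maps, note first that discretization commutes with pullbacks over discrete objects — each $\p{j}$ does, by the commuting squares defining it, and so do the discrete inclusion functors — hence $W_0^d\cong X_0^d\tiund{Z_0}Y_0^d$ and $\zg^W$ is identified with $\zg^X\tiund{Z_0}\zg^Y$. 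Since limits commute with limits and the Segal diagram of the discrete $Z$ is constant at $Z_0$, we get $\pro{W_1}{W_0^d}{s}\cong(\pro{X_1}{X_0^d}{s})\tiund{Z_0}(\pro{Y_1}{Y_0^d}{s})$, and under this identification the induced Segal map of $W$ becomes $\hmu{s}^X\tiund{Z_0}\hmu{s}^Y$. As $X,Y\in\tawg{n}$, the maps $\hmu{s}^X$ and $\hmu{s}^Y$ are $\equ{n-1}$s, so by the inductive hypothesis c) (with $Z_0$ in the middle and the identity on it) their pullback over $Z_0$ is an $\equ{n-1}$. Hence $W\in\tawg{n}$, and likewise $X'\tiund{Z'}Y'\in\tawg{n}$.

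For b), since $\p{n}$ is $\ovl{\p{n-1}}$ computed pointwise and $\p{n-1}$ preserves pullbacks over discrete objects, $(\p{n}W)_s=\p{n-1}(X_s\tiund{Z_0}Y_s)\cong(\p{n}X)_s\tiund{(\p{n}Z)_s}(\p{n}Y)_s$ naturally in $s$, and $\p{n}Z$ is discrete, so $\p{n}W\cong\p{n}X\tiund{\p{n}Z}\p{n}Y$. For c) I verify the two conditions of Definition \ref{def-wg-ps-cat} for $(\za,\zb)$ to be an \nequ. First, $\p{n}(\za,\zb)=\p{n}\za\tiund{\p{n}Z}\p{n}\zb$ by the naturality just found; since $\za,\zb$ are $n$-equivalences, $\p{n}\za$ and $\p{n}\zb$ are $\equ{n-1}$s, so by the inductive hypothesis c) $\p{n}(\za,\zb)$ is an $\equ{n-1}$. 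Second, for $a=(a_X,a_Y),\,b=(b_X,b_Y)\in W_0^d=X_0^d\tiund{Z_0}Y_0^d$, the discreteness of $Z$ forces the fibre-product condition defining $W_1=X_1\tiund{Z_0}Y_1$ to be vacuous over each fibre $W(a,b)$, giving $W(a,b)\cong X(a_X,b_X)\times Y(a_Y,b_Y)$, and under this $(\za,\zb)(a,b)$ becomes $\za(a_X,b_X)\times\zb(a_Y,b_Y)$, a pullback over the discrete terminal object of two $\equ{n-1}$s, hence an $\equ{n-1}$ by the inductive hypothesis c). Thus $(\za,\zb)$ is a local $(n-1)$-equivalence with $\p{n}(\za,\zb)$ an $\equ{n-1}$, so it is an \nequ.

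The step I expect to be the main obstacle is organizing this simultaneous induction so that the three parts interlock without circularity — in particular recognizing that, after identifying the discretization and the Segal pullbacks of $W$ with the objects obtained by pulling back over $Z_0$, the induced Segal map of $X\tiund{Z}Y$ is exactly $\hmu{s}^X\tiund{Z_0}\hmu{s}^Y$, so that part c) in dimension $n-1$ is precisely what is needed. The subsidiary technical facts used — closure of $\cathd{n-1}$ under pullbacks over discrete objects, and the commutation of discretization and of $\p{n-1}$ with such pullbacks — are routine but must be in place before the induction can run.
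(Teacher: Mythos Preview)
Your proof is correct and follows essentially the same inductive architecture as the paper's: a simultaneous induction on $n$ in which part c) at level $n-1$ supplies the induced Segal maps condition for part a) at level $n$, with the closure of $\cathd{n-1}$ under pullbacks over discrete objects and the commutation of discretization with such pullbacks cited from \cite{Pa1}.

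The one noteworthy difference is in part c). You verify the definition of $n$-equivalence directly: you show $\p{n}(\za,\zb)$ is an $(n-1)$-equivalence by identifying it with $\p{n}\za\tiund{\p{n}Z}\p{n}\zb$ via part b) and invoking the inductive hypothesis c) in dimension $n-1$. The paper instead appeals to Proposition~\ref{pro-n-equiv}\,b), reducing to the check that $\p{1,n}(\za,\zb)$ is an isomorphism, which it obtains from $\p{1,n}$ commuting with pullbacks over discrete objects together with $\p{1,n}\za$, $\p{1,n}\zb$ being isomorphisms. Your route is slightly more self-contained (it keeps the induction internal to the lemma and avoids the auxiliary criterion), while the paper's route sidesteps one further invocation of the inductive hypothesis. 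Both are valid; the difference is purely organizational.
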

\begin{proof}
By induction on $n$. It is clear for $n=1$ since the maps $f,g,f',g'$ are isofibrations as their target is discrete. Suppose, inductively, that the lemma holds for $(n-1)$.

\mk
a) Since pullbacks in $\funcat{n-1}{\Cat}$ are computed pointwise, for each $k\geq 0$
\begin{equation*}
    (X\tiund{Z}Y)_k = X_k \tiund{Z} Y_k \in \funcat{n-2}{\Cat}
\end{equation*}
with $X_k,Y_k\in\tawg{n-1}$. It follows from inductive hypothesis a) that $(X\tiund{Z}Y)_k\in\tawg{n-1}$. Also, $(X\tiund{Z}Y)_0 = X_0 \tiund{Z} Y_0 \in\cathd{n-1}$ since $X_0,Y_0\in\cathd{n-1}$ and $Z$ is discrete, (see \cite[Lemma 3.10 c)]{Pa1}).

To show that $X\tiund{Z}Y\in\tawg{n}$ it remains to prove that the induced Segal maps $\hmu{k}$ for $X\tiund{Z}Y$ are $\nm$-equivalences. We prove this for $k=2$, the case $k>2$ being similar. Note that, by \cite[Lemma 3.10 c)]{Pa1}.
\begin{equation}\label{eq1-crit-lev-nequiv}
    \tens{(X\tiund{Z}Y)_1}{(X\tiund{Z}Y)_0^d} \cong (\tens{X_1}{X_0^d})\tiund{Z}(\tens{Y_1}{Y_0^d})\;.
\end{equation}
Consider the commutative diagram in $\tawg{n-1}$
\begin{equation}\label{eq2-crit-lev-nequiv}
\xymatrix{
X_2 \ar^{}[r]\ar_{\hmu{2}(X)}[d] & Z \ar@{=}[d] & Y_2 \ar^{}[l] \ar^{\hmu{2}(Y)}[d]\\
\tens{X_1}{X_0^d} \ar^{}[r] & Z & \tens{Y_1}{Y_0^d} \ar^{}[l]
}
\end{equation}
The vertical maps are the induced Segal maps for $X$ and $Y$, hence they are $\nm$-equivalences. By inductive hypothesis b) applied to \eqref{eq2-crit-lev-nequiv} we conclude that the induced map of pullbacks is a $\nm$-equivalence. By \eqref{eq1-crit-lev-nequiv} the latter is the induced Segal map $\hmu{2}$ for $X\tiund{Y}Z$. The proof for $k>2$ is similar and we conclude that $X\tiund{Y}Z\in\tawg{n}$.
\mk

b) By \eqref{eq-wg-ps-cat}, for all $\uk\in\dop{n-2}$
\begin{equation}\label{eq1-lem-crit-lev-nequiv}
    (\p{n}(X\tiund{Z}Y))_{\uk} = p (X\tiund{Z}Y)_{\uk}\;.
\end{equation}
Since pullbacks in $\funcat{n-1}{\Cat}$ are computed pointwise and $p$ commutes with pullbacks over discrete objects, we have
\begin{equation}\label{eq2-lem-crit-lev-nequiv}
\begin{split}
    & p (X\tiund{Z}Y)_{\uk} = p (X_{\uk}\tiund{Z_{\uk}}Y_{\uk})_{\uk} = pX_{\uk}\tiund{pZ_{\uk}}pY_{\uk}=\\
   = & (\p{n} X)_{\uk}\tiund{(\p{n}Z)_{\uk}} (\p{n} Y)_{\uk}\;.
\end{split}
\end{equation}
Since this holds for all $\uk$, \eqref{eq1-lem-crit-lev-nequiv} and \eqref{eq2-lem-crit-lev-nequiv} imply b).

\mk
c) For each $(a,b),\, (c,d)\in (X\tiund{Z}Y)^d_0 = X_0^d\tiund{Z}Y_0^d$ we have %
\begin{equation*}
\begin{split}
    & (X\tiund{Z}Y)((a,b),(c,d))=X(a,c)\times Y(b,d) \\
    & (X'\tiund{Z}Y')((fa,fb),(gc,gd))=X'(fa,gc)\times Y'(fb,gd)\;.
\end{split}
\end{equation*}
Since $\za,\zb$ are $n$-equivalences, $\za(a,c)$ and $\zb(b,d)$ are $\nm$-equivalences, hence such is
\begin{equation*}
    (\za,\zb)((a,b),(c,d))=\za(a,c)\times \zb(b,d)\;.
\end{equation*}
Since $\p{n}$ commutes with pullbacks over discrete objects for each $n$, so does $\p{1,n}$, hence
\begin{equation*}
    \p{1,n}(X\tiund{Z}Y) = \p{1,n}X\tiund{Z}\p{1,n}Y\;.
\end{equation*}
From the hypothesis and from Proposition \ref{pro-n-equiv} a), $\p{1,n}\za$ and $\p{1,n}\zb$ are isomorphisms, thus so is $\p{1,n}(\za,\zb)$. Since, from above, $(\za,\zb)$ is a local $\nm$-equivalence, we conclude by Proposition \ref{pro-n-equiv} b) that $(\za,\zb)$ is a $n$-equivalence.
\end{proof}

\begin{proposition}\label{pro-crit-lev-nequiv}
    Let $f:X\rw Y$ be a morphism in $\tawg{n}$ with $n\geq 2$, such that
    \begin{itemize}
      \item [a)] $f$ is a $\nequ$.
      \item [b)] $\p{n-1}X_0 \cong \p{n-1}Y_0$,
      \item [c)] For each $1\leq r < n-1$ and all $k_1, \ldots, k_r \geq 0$,
      \begin{equation*}
        \p{n-r-1}X_{k_1, \ldots, k_r,\, 0}\cong \p{n-r-1}Y_{k_1, \ldots, k_r,\, 0}\;.
      \end{equation*}
      Then $J_nf$ is a levelwise equivalence of categories.
    \end{itemize}
\end{proposition}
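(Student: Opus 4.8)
The plan is to argue by induction on $n$, with $n=2$ as the base case. For $n\geq 3$ the key is that $J_n X$ is obtained by applying $J_{n-1}$ levelwise to the simplicial object $X\in\funcat{}{\tawg{n-1}}$, so that $(J_n f)_{k,\uk}=(J_{n-1}f_k)_{\uk}$ for $k\in\Dop$ and $\uk\in\dop{n-2}$; hence $J_n f$ is a levelwise equivalence of categories as soon as each $J_{n-1}f_k$ is. Thus it suffices to check that every $f_k:X_k\rw Y_k$ is a morphism of $\tawg{n-1}$ which is an $(n-1)$-equivalence and which satisfies the analogues of b) and c) one dimension down, and then apply the inductive hypothesis to each $f_k$. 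The reindexing matches up exactly: condition b) for $f_k$ reads $\p{n-2}(X_k)_0\cong\p{n-2}(Y_k)_0$, i.e.\ $\p{n-2}X_{k,0}\cong\p{n-2}Y_{k,0}$, which is the $r=1$ instance of c) for $X$ with leading index $k$ (for $k=0$ this is also what is needed for $f_0:X_0\rw Y_0$), while the $r'$-th instance of c) for $f_k$ is the $(r'+1)$-st instance of c) for $X$, the bound $1\leq r'<n-2$ corresponding to $2\leq r'+1<n-1$. Throughout, b) and c) are used with the understanding that the natural comparison maps $\p{n-1}f_0$ and $\p{n-r-1}f_{k_1,\ldots,k_r,0}$ are isomorphisms.

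The heart of the matter, both in the base case and in the inductive step, is to show that each $f_k$ is an $(n-1)$-equivalence (an equivalence of categories when $n=2$). Since the underlying set of $X_0^d$ is $\p{1,n-1}X_0$ and $\p{1,n-1}f_0=\p{1,n-2}(\p{n-1}f_0)$, condition b) forces $f_0^d:X_0^d\rw Y_0^d$ to be a bijection. As $f$ is an $n$-equivalence, Proposition \ref{pro-n-equiv} a) shows it is a local $(n-1)$-equivalence, so every $f(a,b):X(a,b)\rw Y(fa,fb)$ is an $(n-1)$-equivalence. For $k=0$: $X_0,Y_0\in\cathd{n-1}$, the canonical maps $X_0\rw X_0^d$, $Y_0\rw Y_0^d$ are $(n-1)$-equivalences (a basic property of homotopically discrete objects, see \cite{Pa1}), $f_0^d$ is an isomorphism hence an $(n-1)$-equivalence, and the resulting square commutes, so $f_0$ is an $(n-1)$-equivalence by the two-out-of-three properties in Proposition \ref{pro-n-equiv} c), d). For $k=1$: discreteness of $X_0^d\times X_0^d$ gives $X_1=\coprod_{(a,b)}X(a,b)$ and likewise for $Y_1$, so since $f_0^d$ is a bijection and each $f(a,b)$ is an $(n-1)$-equivalence one reads off that $f_1$ is a local $(n-2)$-equivalence with $\p{1,n-1}f_1$ surjective, whence $f_1$ is an $(n-1)$-equivalence by Proposition \ref{pro-n-equiv} b). For $k\geq 2$: the induced Segal maps give a commuting square
\begin{equation*}
\xymatrix{
X_k \ar[r]^{\hmu{k}} \ar[d]_{f_k} & \pro{X_1}{X_0^d}{k} \ar[d] \\
Y_k \ar[r]^{\hmu{k}} & \pro{Y_1}{Y_0^d}{k}
}
\end{equation*}
whose rows are $(n-1)$-equivalences by the induced Segal maps condition in Definition \ref{def-wg-ps-cat}; by Lemma \ref{lem-crit-lev-nequiv} a), c), applied iteratively after identifying $X_0^d$ with $Y_0^d$ along $f_0^d$ and using that $f_1$ is an $(n-1)$-equivalence, the right-hand vertical map is an $(n-1)$-equivalence, so $f_k$ is an $(n-1)$-equivalence by Proposition \ref{pro-n-equiv} c), d). For $n=2$ the same three steps apply verbatim with ``equivalence of categories'' replacing ``$(n-1)$-equivalence'' (the pullback $\pro{X_1}{X_0^d}{k}$ then being visibly a coproduct of products of the categories $X(a,b)$).

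Putting this together: for $n=2$ we have shown each $f_k$ is an equivalence of categories, which is the desired conclusion since $J_1=\Id$ and $(J_2 f)_k=f_k$; for $n\geq 3$ each $f_k$ is an $(n-1)$-equivalence satisfying b) and c), so by the inductive hypothesis each $J_{n-1}f_k$ is a levelwise equivalence of categories and therefore so is $J_n f$. The step requiring the most care is the two simultaneous pieces of bookkeeping: passing from conditions b), c) for $X$ to the corresponding conditions for every $X_k$ via the reindexing above, and using b) to guarantee that $f_0^d$ is a genuine \emph{bijection} rather than merely a surjection --- without which the decomposition $X_1=\coprod X(a,b)$ would not map bijectively onto $\coprod Y(a',b')$, and the comparison of iterated pullbacks over $X_0^d$ and $Y_0^d$ at levels $k\geq 2$ would break down.
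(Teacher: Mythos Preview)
Your proof is correct and follows essentially the same inductive strategy as the paper's own argument: establish that each $f_k$ is an $(n-1)$-equivalence (treating $k=0$, $k=1$, and $k\geq 2$ separately, the last via the commuting square of induced Segal maps), verify that the hypotheses b) and c) descend to each $f_k$ via the reindexing you describe, and apply the inductive hypothesis levelwise. The paper handles $f_0$ by citing \cite[Lemma 3.8]{Pa1} directly, whereas you unfold this into the two-out-of-three argument with the canonical maps to $X_0^d$ and $Y_0^d$; and for the right vertical map at $k\geq 2$ the paper simply asserts that $(f_1,\ldots,f_1)$ is an $(n-1)$-equivalence while you invoke Lemma \ref{lem-crit-lev-nequiv} --- but these are cosmetic differences only.
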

\begin{proof}
By induction on $n$. Let $f:X\rw Y$ be a 2-equivalence in $\tawg{2}$ such that $p X_0=X_0^d\cong p Y_0=Y_0^d$. Since $f_0$ is a morphism in $\cathd{}$ it follows that $f_0$ is an equivalence of categories. Since $f(a,b)$ is an equivalence of categories for all $a,b\in X_0^d$ there is an equivalence of categories
\begin{equation*}
    f_1:X_1=\underset{a,b\in X_0^d}{\cop}X(a,b)\rw Y_1= \underset{a',b'\in Y_0^d}{\cop}Y(a',b')=\underset{fa,fb\in Y_0^d}{\cop}Y(fa,fb).
\end{equation*}

Hence there are equivalences of categories for $k\geq 2$
\begin{equation*}
    X_k \sim \pro{X_1}{X_0^d}{k}\sim \pro{Y_1}{Y_0^d}{k}\sim Y_k\;.
\end{equation*}
In conclusion $X_k\sim Y_k$ for all $k\geq 0$.

Suppose, inductively, that the statement holds for $(n-1)$ and let $f:X\rw Y$ be as in the hypothesis. We show that $J_{n-1}f_k$ is a levelwise equivalence of categories for each $k\geq 0$ by showing that $f_k$ satisfies the inductive hypothesis. It then follows that $J_n f$ is a levelwise equivalence of categories since
\begin{equation*}
    (J_n f)_{k_1...k_{n-1}}=(J_{n-1}f_{k_1})_{k_2...k_{n-1}}\;.
\end{equation*}

Since  $X_0\in\cathd{n-1}$, from b) and \cite[Lemma 3.8]{Pa1} we obtain
\begin{equation}\label{eq1-pro-crit-lev-nequiv}
    X_0^d=\p{1,n}X_0 \cong \p{1,n}Y_0 \cong Y_0^d\;.
\end{equation}
Thus, by \cite[Lemma 3.8]{Pa1} again, $f_0:X_0\rw Y_0$ is a $\equ{n-1}$. Further, by hypothesis c),
\begin{align*}
    & \p{n-2}X_{00} \cong \p{n-2}Y_{00}\\
    & \p{n-r-2}X_{0\,k_1...k_r\,0} \cong \p{n-r-2}Y_{0\,k_1...k_r\,0}
\end{align*}
for each $1\leq r < n-2$ and all $k_1...k_r$. Thus $f_0:X_0\rw Y_0$ satisfies the inductive hypothesis and we conclude that $f_0$ is a levelwise equivalence of categories. By \eqref{eq1-pro-crit-lev-nequiv} we also have
\begin{equation*}
    f_1:\underset{a,b\in X_0^d}{\cop}X(a,b)\rw Y_1 = \underset{a',b'\in Y_0^d}{\cop}Y(a',b')=\underset{fa,fb\in Y_0^d}{\cop}Y(fa,fb)\;.
\end{equation*}
Since $f$ is a local $\equ{n-1}$, it follows that $f_1:X_1 \rw Y_1$ is a $\equ{n-1}$. Further, by hypothesis c)
\begin{align*}
    & \p{n-2}X_{10} \cong \p{n-2}Y_{10}\\
    & \p{n-r-2}X_{1\,k_1...k_r\,0} \cong \p{n-r-2}Y_{1\,k_1...k_r\,0}
\end{align*}
for all $1 \leq r < n-2$. Thus $f_1$ satisfies the inductive hypothesis, and is therefore a levelwise equivalence of categories.

For each $k\geq 2$ consider the map
\begin{equation*}
    (f_1,...,f_1):\pro{X_1}{X_0^d}{k}\rw \pro{Y_1}{Y_0^d}{k}\;.
\end{equation*}
Since $X_0^d \cong Y_0^d$ and, from above, $f_1$ is a $\equ{n-1}$, then $(f_1,...,f_1)$ is also a $\equ{n-1}$.

There is a commutative diagram in $\tawg{n-1}$
\begin{equation*}
\xymatrix{
X_k \ar^(0.3){\hmuk}[rr] \ar_{f_k}[d] && \pro{X_1}{X_0^d}{k} \ar^{(f_1,...,f_1)}[d]\\
Y_k \ar_(0.3){\hmuk}[rr] && \pro{Y_1}{Y_0^d}{k}
}
\end{equation*}
where the horizontal induced Segal maps are $(n-1)$-equivalences since $X,Y\in\tawg{n-1}$ and the right vertical map is a $(n-1)$-equivalence from above. It follows from Proposition \ref{pro-n-equiv} c) and d) that $f_k$ is a $(n-1)$-equivalence. Further, from hypothesis c),
\begin{align*}
    & \p{n-1}X_{k0} \cong \p{n-2}Y_{k0}\\
    & \p{n-r-2}X_{k\,k_1...k_r\,0} \cong \p{n-r-2}Y_{k\,k_1...k_r\,0}\;.
\end{align*}
Thus $f_k$ satisfies the induction hypothesis and we conclude that $f_k$ is a levelwise equivalence of categories.

In conclusion, $f_k$ is a levelwise equivalence of categories for all $k\geq 0$. Since this holds for each $k\geq0$ this implies that $f$ is a levelwise equivalence of categories.
\end{proof}

\section{The functor $\q{n}$.}\label{sec-func-qn}
This section introduces the functor
\begin{equation*}
  \q{n}:\tawg{n}\rw \tawg{n-1}
\end{equation*}

This functor is a higher dimensional generalization of the connected component functor $q:\Cat\rw\Set$ and comes equipped with a morphism
 \begin{equation*}
   \zg\up{n}:X\rw \di{n}\q{n}X
 \end{equation*}
  for each $X\in \tawg{n}$, where $\di{n}$ is as in Notation \ref{not-ner-func-dirk} . It will be used crucially in Section \ref{sec-cat-lta} to replace a weakly globular \nfol category $X$ with a simpler one (Theorem \ref{the-repl-obj-1}). This will involve taking pullbacks along the map $\zg\up{n}$. The last part of this Section establishes several properties of these pullbacks needed in Section \ref{sec-cat-lta}.
\begin{proposition}\label{pro-post-trunc-fun}
    There is a functor $\qn:\tawg{n}\rw \tawg{n-1}$ making the following diagram commute.
    \begin{equation}\label{eq-lem-post-trunc}
    \xymatrix@C=30pt{
    \tawg{n} \ar^{J_{n}}[rr]\ar_{\qn}[d] && \funcat{n}{\Cat} \ar^{\ovl{q}}[d]\\
    \tawg{n-1} \ar_{\Nb{n-1}}[rr] & & \funcat{n-1}{\Set}
    }
    \end{equation}
    where $q:\Cat\rw \Set$ is the connected component functor. The functor $\qn$ sends $n$-equivalences to $\nm$-equivalences and preserves pullbacks over discrete objects. If $X\in\cathd{n}$, then $\q{n}X=\p{n}X$; further, for each $X\in \tawg{n}$, there is a map $\zgu{n}:X\rw \dn\qn X$ natural in $X$.
\end{proposition}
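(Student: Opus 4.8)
The plan is to prove the proposition by induction on $n$, mirroring the inductive construction of the truncation functor $\p{n}$ in Definition \ref{def-wg-ps-cat}. For $n=1$ one sets $\q{1}=q:\Cat\rw\Set$: Lemma \ref{lem-q-pres-fib-pro} records that $q$ preserves pullbacks over discrete objects and sends equivalences of categories to isomorphisms, $\zgu{1}$ is the (natural) unit of the adjunction $q\dashv\di{1}$, and since every object of $\cathd{1}$ is an equivalence relation, hence a groupoid, $q$ and $p$ agree on it so that $\q{1}$ restricts to $\p{1}$; the diagram \eqref{eq-lem-post-trunc} is vacuous here.

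For the inductive step assume $\q{n-1}:\tawg{n-1}\rw\tawg{n-2}$ has been constructed with all the stated properties. Given $X\in\tawg{n}$, regarded as a simplicial object in $\tawg{n-1}$, define $\qn X$ by applying $\q{n-1}$ levelwise; equivalently, $\qn$ is the restriction to $\tawg{n}$ of the functor $\ovl{\q{n-1}}:\funcat{}{\tawg{n-1}}\rw\funcat{}{\tawg{n-2}}$, and it satisfies $\Nb{n-1}\qn X=\ovl{q}\,J_n X$. The first task is to check that $\qn X\in\tawg{n-1}$. The weak globularity condition holds because $(\qn X)_0=\q{n-1}X_0=\p{n-1}X_0\in\cathd{n-2}$, using $X_0\in\cathd{n-1}$ and the inductive hypothesis that $\q{n-1}$ restricts to $\p{n-1}$ on $\cathd{n-1}$. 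For the induced Segal maps condition one applies $\q{n-1}$ to the induced Segal maps $X_s\rw\pro{X_1}{X_0^d}{s}$ of $X$, which are $(n-1)$-equivalences; since $\q{n-1}$ preserves pullbacks over discrete objects, satisfies $\q{n-1}(X_0^d)=(\q{n-1}X_0)^d$, and sends $(n-1)$-equivalences to $(n-2)$-equivalences, one obtains the required $(n-2)$-equivalences $(\qn X)_s\rw\pro{(\qn X)_1}{(\qn X)_0^d}{s}$. Commutativity of \eqref{eq-lem-post-trunc} at step $n$ is then immediate from the definition together with the analogous diagram for $\q{n-1}$.

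Next I would dispose of the remaining structural claims. Preservation of pullbacks over discrete objects follows because pullbacks in $\funcat{}{\tawg{n-1}}$ are computed levelwise, a discrete object of $\tawg{n}$ is levelwise a discrete object of $\tawg{n-1}$ (Definition \ref{def-x-tawg-disc}), and $\q{n-1}$ preserves such pullbacks by induction. For the identity $\qn X=\p{n}X$ on $X\in\cathd{n}$: by Definition \ref{def-hom-dis-ncat} a) the $n$-th direction of such an $X$ is groupoidal, so each category $(J_n X)_{\uk}$ is a groupoid, and on groupoids $q$ coincides with $p$; hence $\ovl{q}\,J_n X=\ovl{p}\,J_n X$ and therefore $\qn X=\p{n}X$. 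Finally, $\zgu{n}$ is built by applying $\zgu{1}$ to each category $(J_n X)_{\uk}$: naturality of $\zgu{1}$ assembles these into a morphism $J_n X\rw J_n(\di{n}\qn X)$ in $\funcat{n-1}{\Cat}$, which by full faithfulness of $J_n$ is $J_n$ of a unique morphism $\zgu{n}_X:X\rw\di{n}\qn X$, natural in $X$.

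The main obstacle is to prove that $\qn$ sends $n$-equivalences to $(n-1)$-equivalences. Let $f:X\rw Y$ be an $n$-equivalence. One first checks that $\qn f$ is a local $(n-2)$-equivalence: since $\q{n-1}$ preserves pullbacks over discrete objects it preserves fibres over discrete objects, so $(\qn X)(a,b)=\q{n-1}(X(a,b))$ and $(\qn f)(a,b)=\q{n-1}(f(a,b))$, which is an $(n-2)$-equivalence by induction because $f(a,b)$ is an $(n-1)$-equivalence. To conclude via Proposition \ref{pro-n-equiv} b) it then remains to show that $\p{1,n-1}\qn f$ is surjective, and the key point I would establish is a natural isomorphism $\p{1,n-1}\qn X\cong\p{1,n}X$. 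This should follow from the commuting squares defining the functors $\p{j}$ and $\qn$ together with the canonical natural epimorphism $p\Rw q$ (on a category, the set of isomorphism classes surjects onto the set of connected components), the content being that once one iterates $p$ down to dimension zero it no longer matters whether the last coordinate was collapsed by $p$ or by $q$. Granting this, $\p{1,n-1}\qn f$ is identified with $\p{1,n}f$, a bijection by Proposition \ref{pro-n-equiv} a), so Proposition \ref{pro-n-equiv} b) gives that $\qn f$ is an $(n-1)$-equivalence. Verifying this compatibility is where the real work lies; the naive alternative of reducing to a levelwise statement via Lemma \ref{lem-flevel-fneq} is unavailable, since an $n$-equivalence need not be a levelwise $(n-1)$-equivalence.
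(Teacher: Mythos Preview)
Your inductive setup, the verification that $\qn X\in\tawg{n-1}$, the preservation of pullbacks over discrete objects, the agreement with $\p{n}$ on $\cathd{n}$, and the construction of $\zgu{n}$ all match the paper's proof essentially line for line.

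The gap is in the final step, showing that $\qn$ preserves equivalences. You correctly reduce, via Proposition~\ref{pro-n-equiv}~b), to surjectivity of $\p{1,n-1}\qn f$, but you propose to obtain this from a natural \emph{isomorphism} $\p{1,n-1}\qn X\cong\p{1,n}X$. That isomorphism is not established and there is no reason to expect it: already for $n=2$ the functor $\p{2}X\rw\q{2}X$ is the identity on objects and surjective on morphisms, but a functor with these two properties need not induce a bijection on isomorphism classes of objects (a morphism may become invertible in the quotient without any lift being invertible). Your heuristic ``once one iterates $p$ down to dimension zero it no longer matters whether the last coordinate was collapsed by $p$ or by $q$'' is precisely what fails.

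The paper avoids this by aiming only for surjectivity. It uses the natural transformation $\za\up{n}_X:\p{n}X\rw\q{n}X$ obtained levelwise from the surjection $p\Rw q$, and observes that $\p{2,n-1}\za\up{n}_X$ is the identity on objects: its object part is $\p{1,n-2}\p{n-1}X_0\rw\p{1,n-2}\q{n-1}X_0$, and $\p{n-1}X_0=\q{n-1}X_0$ because $X_0\in\cathd{n-1}$. Hence $\p{1,n-1}\za\up{n}_X$ is surjective. Then the commuting square
\[
\xymatrix{
\p{1,n}X \ar[rr]^{\p{1,n}f}_{\cong} \ar@{->>}[d]_{\p{1,n-1}\za\up{n}_X} && \p{1,n}Y \ar@{->>}[d]^{\p{1,n-1}\za\up{n}_Y}\\
\p{1,n-1}\q{n}X \ar[rr]_{\p{1,n-1}\q{n}f} && \p{1,n-1}\q{n}Y
}
\]
with top arrow an isomorphism (Proposition~\ref{pro-n-equiv}~a)) forces the bottom arrow to be surjective, which is all that Proposition~\ref{pro-n-equiv}~b) requires. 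Replacing your claimed isomorphism by this surjectivity argument closes the gap.
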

\begin{proof}
By induction on $n$; for $n=1$, $\q{1}=q:\Cat\rw\Set$ is the connected components functor which, by Lemma \ref{lem-q-pres-fib-pro}, has the desired properties. If $X\in\cathd{}$, in particular $X$ is a groupoid, so $pX=qX$.

Suppose there is a $\q{n-1}$ with the desired properties, let $X\in\tawg{n}$. We claim that $ \ovl{\q{n-1}}X\in\tawg{n-1}$. In fact, for each $s\geq 0$, by induction hypothesis
\begin{equation*}
    ( \ovl{\q{n-1}}X)_s=\q{n-1}X_s\in\tawg{n-2}\;.
\end{equation*}
Also, by induction hypothesis and by Definition \ref{def-hom-dis-ncat},
\begin{equation*}
    ( \ovl{\q{n-1}}X)_0 =\q{n-1}X_0 = \p{n-1}X_0 \in \cathd{n-1}
\end{equation*}
as $X_0 \in \cathd{n-1}$. Further, since
\begin{equation*}
    \hmuk : X_k\rw \pro{X_1}{X_0^d}{k}
\end{equation*}
is a $\equ{n-1}$, by induction hypothesis the map
\begin{equation*}
    \q{n-1}X_k\rw\q{n-1}(\pro{X_1}{X_0^d}{k})\cong \pro{\q{n-1}X_1}{X_0^d}{k}
\end{equation*}
is a $\equ{n-2}$; in this we used the fact that $\q{n-1}X_0^d\cong X_0^d$, which follows from diagram \eqref{eq-lem-post-trunc} at step $(n-1)$.

This shows that $ \ovl{\q{n-1}}X\in\tawg{n-1}$. We therefore define
\begin{equation*}
    \q{n}X = \ovl{\q{n-1}}X\;.
\end{equation*}
The fact that $\q{n}$ satisfies diagram \eqref{eq-lem-post-trunc} is immediate from the definitions and the induction hypothesis.

If $X\in\cathd{n}$, by definition $X_k\in\cathd{n-1}$ for each $k$, so by induction hypothesis $\p{n-1}X_k=\q{n-1}X_k$. It follows that $(\p{n}X)_k=(\q{n}X)_k$ for all $k$. That is $\p{n}X=\q{n}X$.

Let $f:X\rw Y$ be a $\nequ$ in $\tawg{n}$ and let $a,b\in X_0^d$. Then from the definitions
\begin{equation*}
    (\q{n}f)(a,b)=\q{n-1}f(a,b)\;.
\end{equation*}
Since $f(a,b)$ is a $\equ{n\mi1}$, by induction hypothesis $\q{n-1} f(a,b)$ is a $\equ{n-2}$. By Proposition \ref{pro-n-equiv}, to prove that $\q{n}f$ is a $\equ{n-1}$, it is enough to show that $\p{1,n-1}\q{n}f$ is an surjective.

Recall that for any category $\clC$ there is a surjective map $p\,\clC\rw q\,\clC$ natural in $\clC$. Applying this levelwise to $J_nX$ we obtain a map
\begin{equation*}
    \za_X\up{n}:\p{n}X \rw \q{n}X
\end{equation*}
natural in $X$. The map $\za\up{n}$ induces  a functor
\begin{equation}\label{eq-post-trunc-fun}
 \p{2,n-1}\za_X\up{n}: \p{2,n-1}\p{n}X \rw \p{2,n-1}\q{n}X\;,
\end{equation}
which is identity on objects. In fact, on object this map is given by
\begin{equation*}
   \p{1,n-2}\p{n-1}X_0 \rw \p{1,n-2}\q{n-1}X_0
\end{equation*}
and since $X_0\in\cathd{n-1}$, $\p{n-1}X_0=\q{n-1}X_0$ so this map is the identity. It follows that the map in $\Set$
\begin{equation*}
    \p{1,n-1}\za_X\up{n-1}: \p{1,n-1}\p{n}X \rw \p{1,n-1}\q{n}X
\end{equation*}
is surjective. We thus have a commuting diagram
\begin{equation*}
\xymatrix{
\p{1,n-1}\p{n}X \ar^{\p{1,n-1}\p{n}f}[rr] \ar_{\p{1,n-1}\za_X\up{n}}[d] && \p{1,n-1}\p{n}Y \ar^{\p{1,n-1}\za_Y\up{n}}[d]\\
\p{1,n-1}\q{n}X \ar^{\p{1,n-1}\q{n}f}[rr] && \p{1,n-1}\q{n}Y
}
\end{equation*}
in which the top arrow is an isomorphism (by Proposition \ref{pro-n-equiv}) and from above the vertical arrows are surjective. It follows that the bottom map is also surjective. By Proposition \ref{pro-n-equiv} b) we conclude that $\q{n}f$ is a $\equ{n-1}$.

Finally, the map $\zgu{n}:X\rw \q{n}X$ is given levelwise by the maps $X_s\rw \q{n-1}X_s$, which exist by induction hypothesis.
\end{proof}
\begin{remark}\label{rem-spec-isofib}
    For each $X\in \tawg{n}$, from the proof of Proposition \ref{pro-post-trunc-fun} the functor $\p{2,n-1}\za_X\up{n}$ is identity on objects. It is also surjective on morphisms since, by the proof of Proposition \ref{pro-post-trunc-fun}, the map
    \begin{equation*}
        (\p{2,n-1}\za_X\up{n})_1:\p{1,n-1}\p{n-1} X_1 \rw \p{1,n-2}\q{n-1} X_1 \;,
    \end{equation*}
    is surjective. It follows that $\p{2,n-1}\za_X\up{n}$ is an isofibration.
\end{remark}
\begin{lemma}\label{lem-spec-pulbk-eqr}
    Let $X\in\cathd{n}$, $Z\in\cathd{n-1}$, $r: Z\rw \qn X$. Consider the pullback in $\funcat{n-1}{\Cat}$
    \begin{equation*}
        \xymatrix@R=35pt @C=40pt{
        P \ar^{}[r] \ar^{}[d] & X \ar^{\zgu{n}}[d] \\
        \dn Z \ar_{\dn r}[r] & \dn \qn X
        }
    \end{equation*}
    then $P\in\cathd{n}$ and $\p{n}P=Z$.
\end{lemma}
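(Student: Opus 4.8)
The plan is to prove both assertions by a pointwise analysis, using the explicit form of the maps $\zgu{n}$ and $\dn r$. Since $X\in\cathd{n}$, Proposition \ref{pro-post-trunc-fun} gives $\qn X=\p{n}X$, so $\zgu{n}:X\rw\dn\qn X=\dn\p{n}X$ is the map of Definition \ref{def-hom-dis-ncat-0}: at each multilevel $\uk\in\dop{n-1}$ it is the canonical functor $(J_nX)_{\uk}\rw d(p(J_nX)_{\uk})$ onto the discrete category on the set of isomorphism classes, while $(\dn r)_{\uk}=d(r_{\uk}):d(Z_{\uk})\rw d(p(J_nX)_{\uk})$. First I would record the easy facts that $\dn Z$ and $\dn\p{n}X$ lie in $\cathd{n}$ (cf.\ \cite{Pa1}); moreover the pullback $P$, computed pointwise in $\funcat{n-1}{\Cat}$, lies in $\cat{n}$, since $J_n$ is a composite of nerve functors and hence preserves limits, $\cat{n}$ has finite limits, and $X,\dn Z,\dn\p{n}X$ all lie in $\cat{n}$; thus $P$ coincides with $J_n$ applied to the pullback formed in $\cat{n}$.

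Next comes the key pointwise computation. For $\uk\in\dop{n-1}$, since pullbacks in $\funcat{n-1}{\Cat}$ are pointwise and using the descriptions above,
\begin{equation*}
  (J_nP)_{\uk}\;=\;d(Z_{\uk})\tiund{d(p(J_nX)_{\uk})}(J_nX)_{\uk}\;\cong\;\uset{z\in Z_{\uk}}{\coprod}\,\bigl((J_nX)_{\uk}\bigr)_{r_{\uk}(z)}\,,
\end{equation*}
where $\bigl((J_nX)_{\uk}\bigr)_{c}$ denotes the full subcategory of $(J_nX)_{\uk}$ on the objects of isomorphism class $c$: as the target of the quotient functor is discrete, the fibre product breaks up into a disjoint union, indexed by $Z_{\uk}$, of the fibres of that functor, each of which is a connected groupoid. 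Since $X\in\cathd{n}$ each such fibre is homotopically discrete, hence so is $(J_nP)_{\uk}$. From this, together with $P\in\cat{n}$, the facts that the $0$- and $1$-levels of $P$ in the $n$-th direction, namely $Z\tiund{\p{n}X}X_0$ and $Z\tiund{\p{n}X}X_1$, lie in $\cathd{n-1}$, and the compatibility of $P$ with $\op{n-1}$, one verifies that $P$ satisfies all the conditions of Definition \ref{def-hom-dis-ncat}, i.e.\ $P\in\cathd{n}$; this is a routine induction on $n$, patterned on the closure results of \cite{Pa1}.

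It remains to identify $\p{n}P$. As $P\in\cathd{n}$, Proposition \ref{pro-post-trunc-fun} gives $\p{n}P=\qn P$. Applying the connected component functor levelwise to the display above, and using that $q$ preserves coproducts (being left adjoint) and that each fibre $\bigl((J_nX)_{\uk}\bigr)_{r_{\uk}(z)}$ is connected, we get for each $\uk\in\dop{n-1}$
\begin{equation*}
  \bigl(\Nb{n-1}\qn P\bigr)_{\uk}\;=\;q\bigl((J_nP)_{\uk}\bigr)\;=\;\uset{z\in Z_{\uk}}{\coprod}\,\{*\}\;=\;Z_{\uk}\;=\;\bigl(\Nb{n-1}Z\bigr)_{\uk}\,.
\end{equation*}
These identifications are natural in $\uk$, so $\Nb{n-1}\qn P\cong\Nb{n-1}Z$; since the multinerve is fully faithful on $\cathd{n-1}$, this yields $\p{n}P=\qn P\cong Z$.

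The step I expect to be the main obstacle is establishing $P\in\cathd{n}$, rather than merely $P\in\cat{n}$: one has to check that forming the pullback does not disturb the homotopically discrete structure in any direction. What makes this work is, on the one hand, that $\dn r$ is discrete in the $n$-th direction, so that pointwise the fibre product over $\dn\p{n}X$ collapses to a disjoint union of the connected fibres of $X$ over $\p{n}X$; and, on the other hand, that $X$ is genuinely homotopically discrete — not merely groupoidal in its $n$-th direction — so that these fibres are themselves homotopically discrete. The remaining verifications (that the lower faces lie in $\cathd{n-1}$, and that the truncation functor $\p{n}$ is well defined on $P$) are bookkeeping that follows from the pointwise description, along the lines of \cite{Pa1}.
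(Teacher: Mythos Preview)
Your pointwise coproduct decomposition is correct and gives a pleasant alternative route to $\p{n}P=Z$, but the proposal diverges from the paper in organization and leaves the central step underspecified.

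The paper argues by a clean induction on $n$ along the \emph{first} simplicial direction. For $n=1$, since the target of $\zgu{1}$ is discrete, $\zgu{1}$ is an isofibration as well as an equivalence, so $P\simeq\di{1}Z\in\cathd{}$ with $pP=Z$. For the inductive step one observes that the $k$-th level (in direction~1) of the pullback square is again a pullback of the same shape with $X_k\in\cathd{n-1}$ and $Z_k\in\cathd{n-2}$, so $P_k\in\cathd{n-1}$ by induction; the Segal condition $P_k\cong\pro{P_1}{P_0}{k}$ is obtained from Lemma~\ref{lem-char-obj-II}, using that $\q{n}X=\p{n}X\in\cathd{n-1}$ has isomorphic Segal maps. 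Finally $(\p{n}P)_{\us}=p(dZ_{\us}\tiund{dqX_{\us}}X_{\us})=Z_{\us}\tiund{qX_{\us}}pX_{\us}=Z_{\us}$, since $X_{\us}$ is a groupoid so $pX_{\us}=qX_{\us}$; this identifies $\p{n}P$ with $Z\in\cathd{n-1}$ and thereby \emph{completes} the verification that $P\in\cathd{n}$.

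Two points of friction in your version. First, the inductive definition of $\cathd{n}$ unfolds along direction~1, so the objects you must show lie in $\cathd{n-1}$ are $P_k=\di{n-1}Z_k\tiund{\di{n-1}\q{n-1}X_k}X_k$, not the $n$-th--direction levels $Z\tiund{\p{n}X}X_s$ you wrote; the latter are not of the shape to which the inductive hypothesis applies, whereas the former are exactly one dimension down from the original square. The ``routine induction'' you defer to is thus precisely the paper's argument, and your extra pointwise analysis does not shorten it. Second, there is a mild circularity: you invoke $\p{n}P=\qn P$ from Proposition~\ref{pro-post-trunc-fun}, which presupposes $P\in\cathd{n}$, yet Definition~\ref{def-hom-dis-ncat} requires $\p{n}P\in\cathd{n-1}$ as part of establishing $P\in\cathd{n}$. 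The fix is easy---compute $p\bigl((J_nP)_{\uk}\bigr)=Z_{\uk}$ directly from your coproduct (each fibre has a single isomorphism class), exactly as the paper does---but as written the logical order is inverted.
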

\begin{proof}
By induction on $n$. For $n=1$, since $\di{1}\q{1}X$ is discrete, the map $\zgu{1}:X\rw \di{1}\q{1}X=X^d$ is an isofibration. Therefore, since $\zgu{1}$ is an equivalence of categories (as $X\in\cathd{}$) we have an equivalence of categories
\begin{equation*}
    P=\di{1}Z\tiund{\di{1}\q{1}X} X\simeq \di{1}Z\tiund{\di{1}\q{1}X} \di{1}\q{1}X = \di{1}Z
\end{equation*}
Thus $P\in\cathd{}$ and $pP=Z$.

Suppose, inductively, that the lemma holds for $n-1$ and let $P$ be as in the hypothesis. Since pullbacks in $\funcat{n-1}{\Cat}$ are computed pointwise, for each $k\geq 0$ we have a pullback in $\funcat{n-2}{\Cat}$
\begin{equation*}
\xymatrix{
P_k \ar^{}[rr] \ar^{}[d] && X_k \ar^{}[d]\\
\di{n-1}Z_k \ar^{}[rr] && \di{n-1}\q{n-1}X_k
}
\end{equation*}
where $X_k\in\cathd{n-1}$ (since $X\in \cathd{n}$) and $Z_k \in \cathd{n-2}$ (since $Z\in\cathd{n-1}$). By induction hypothesis, we conclude that $P_k\in\cathd{n-1}$.

We now show that, for each $k \geq 2$
\begin{equation}\label{eq1-lem-spec-pulbk-eqr}
    P_k\cong \pro{P_1}{P_0}{k}\;.
\end{equation}
We illustrate this for $k=2$, the case $k>2$ being similar. Since $X\in\cathd{n}$, $\q{n}X = \p{n}X\in\cathd{n-1}$, so
\begin{equation*}
\begin{split}
    & \q{n-1}X_2=\p{n-1}X_2 =\p{n-1}(\tens{X_1}{X_0})= \\
   =\ & \tens{\p{n-1} X_1}{\p{n-1} X_0}=\tens{\q{n-1} X_1}{\q{n-1} X_0}\;.
\end{split}
\end{equation*}
Since $X_2 \cong \tens{X_1}{X_0}$ and $Z_2 \cong \tens{Z_1}{Z_0}$, it follows from Lemma \ref{lem-char-obj-II} that $P_2 \cong \tens{P_1}{P_0}$.

To prove that $P\in\cathd{n}$ it remains to show that $\p{n}P\in\cathd{n-1}$. Since $p$ commutes with fiber products over discrete objects, for each $\us\in\dop{n-1}$ we have
\begin{align*}
    &(\p{n}P)_{\us}=p\,P_{\us}=
    p(d\,Z_{\us}\tiund{d q X_{\us}} X_{\us})
     = Z_{\us}\tiund{qX_{\us}}p X_{\us}=Z_{\us}
\end{align*}
where we used the fact that, since $X_{\us}$ is a groupoid, $pX_{\us}=qX_{\us}$. Since this holds for each ${\us}$ we conclude that $\p{n}P=Z\in\cathd{n-1}$ as required.
\end{proof}
\begin{lemma}\label{lem-p2-n-1}
    Let $Y\in\tawg{n}$ and let
    \begin{equation}\label{eq1-lem-p2-n-1}
        X\rw \q{n}Y \lw \p{n}Y
    \end{equation}
    be a diagram in $\tawg{n-1}$ such that $X\tiund{\q{n}Y}\p{n}Y\in \tawg{n-1}$. Then for all $1\leq j \leq n-1$
      \begin{equation*}
        \p{j,n-1}(X\tiund{\q{n}Y}\p{n}Y)=\p{j,n-1}X\tiund{\p{j,n-1}\q{n}Y}\p{j,n-1}\p{n}Y\;.
      \end{equation*}
\end{lemma}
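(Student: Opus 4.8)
The plan is to induct on $n$, peeling off the outermost simplicial direction at each step; the genuine content sits entirely in the case $j=1$, where one invokes the behaviour of the comparison map $\za\up{n}_Y:\p{n}Y\rw\q{n}Y$ on objects.

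For $n=2$ we have $\p{1,1}=p:\Cat\rw\Set$ and $X\tiund{\q{2}Y}\p{2}Y$ is a pullback in $\Cat$ whose right-hand leg is $\za\up{2}_Y$, obtained by applying the canonical natural transformation $p\Rw q$ levelwise; on objects this is the canonical surjection $pY_0\rw qY_0$. Since $Y_0\in\cathd{}$ is a groupoid, $pY_0=qY_0$ and $\za\up{2}_Y$ is the identity on objects, so Lemma~\ref{lem-iso-cla-obj-fib-pro}(b) gives $p(X\tiund{\q{2}Y}\p{2}Y)=pX\tiund{p\q{2}Y}p\p{2}Y$, which is the claim.

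For $n\geq 3$, assume the lemma for $n-1$. Because $\p{n}$ and $\q{n}$ are the restrictions of $\overline{\p{n-1}}$ and $\overline{\q{n-1}}$ and pullbacks in $\funcat{n-1}{\Cat}$ are pointwise, the $s$-component of $P:=X\tiund{\q{n}Y}\p{n}Y$ is $P_s=X_s\tiund{\q{n-1}Y_s}\p{n-1}Y_s$, with right-hand leg $\za\up{n-1}_{Y_s}$; similarly, iterating the fact that each $\p{k}$ is a restriction of $\overline{\p{k-1}}$ shows that $\p{j,n-1}$ acts on $s$-components through $\p{j-1,n-2}$ for $2\leq j\leq n-1$. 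Hence for $2\leq j\leq n-1$ the inductive hypothesis, applied to $Y_s\in\tawg{n-1}$, the diagram $X_s\rw\q{n-1}Y_s\lw\p{n-1}Y_s$ and the index $j-1\in\{1,\dots,n-2\}$, gives
\[
\p{j-1,n-2}\bigl(X_s\tiund{\q{n-1}Y_s}\p{n-1}Y_s\bigr)=\p{j-1,n-2}X_s\tiund{\p{j-1,n-2}\q{n-1}Y_s}\p{j-1,n-2}\p{n-1}Y_s
\]
for every $s$; reading both sides as $s$-components and using pointwise-ness of pullbacks, this is precisely $\p{j,n-1}P=\p{j,n-1}X\tiund{\p{j,n-1}\q{n}Y}\p{j,n-1}\p{n}Y$.

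Finally, for $j=1$ write $\p{1,n-1}=p\circ\p{2,n-1}$. By the case $j=2$ already proved, $\p{2,n-1}P=\p{2,n-1}X\tiund{\p{2,n-1}\q{n}Y}\p{2,n-1}\p{n}Y$ is a pullback in $\Cat$ whose right-hand leg is $\p{2,n-1}\za\up{n}_Y$, and this functor is the identity on objects by Remark~\ref{rem-spec-isofib}. Applying $p$ together with Lemma~\ref{lem-iso-cla-obj-fib-pro}(b) once more yields $\p{1,n-1}P=\p{1,n-1}X\tiund{\p{1,n-1}\q{n}Y}\p{1,n-1}\p{n}Y$, finishing the induction. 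The main obstacle is not any single computation but organising the pointwise bookkeeping of which truncation acts through which; the one substantive input is that the comparison map is the identity on objects after the relevant truncations, which is exactly where homotopical discreteness of $Y_0$ (forcing $p=q$ on the groupoidal part) enters and makes Lemma~\ref{lem-iso-cla-obj-fib-pro}(b) available.
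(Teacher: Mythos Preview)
Your proof is correct and follows the same inductive strategy as the paper: the base case $n=2$ via Lemma~\ref{lem-iso-cla-obj-fib-pro}(b) using that $\za\up{2}_Y$ is the identity on objects, and the inductive step by the componentwise identity $(\p{j,n-1}Z)_k=\p{j-1,n-2}Z_k$. You are in fact slightly more careful than the paper: its displayed computation literally applies only for $j\geq 2$ (since $\p{j-1,n-2}$ is undefined for $j=1$), whereas your explicit treatment of $j=1$ via the factorisation $\p{1,n-1}=p\circ\p{2,n-1}$, Remark~\ref{rem-spec-isofib}, and a second application of Lemma~\ref{lem-iso-cla-obj-fib-pro}(b) makes that endpoint case precise.
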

\begin{proof}\

By induction on $n$. For $n=2$, the functor $\p{2}Y\rw \q{2}Y$ is the identity on objects, therefore by Lemma \ref{lem-iso-cla-obj-fib-pro}
\begin{equation*}
    p(X\tiund{\q{2}Y}\p{2}Y)=pX\tiund{p\q{2}Y}p\p{2}Y\;.
\end{equation*}
Suppose, inductively, that it holds for $n-1$. Then for each $k\geq 0$
\begin{equation}\label{eq2-lem-p2-n-1}
\begin{split}
   & (\p{j,n-1}(X\tiund{\q{n}Y}\p{n}Y))_k= \\
   =\; &\p{j-1,n-2}(X_k\tiund{\q{n-1}Y_k}\p{n-1}Y_k)=\\
   =\; &\p{j-1,n-2}X_k \tiund{\p{j-1,n-2}\q{n-1}Y_k} \p{j-1,n-2}\p{n-1}Y_k=\\
   =\; &(\p{j,n-2}X)_k \tiund{(\p{j,n-2}\q{n-1}Y)_k} (\p{j,n-2}\p{n-1}Y)_k\;.
\end{split}
\end{equation}
Since this holds for each $k\geq 0$, the lemma  follows.
\end{proof}
\begin{proposition}\label{pro-spec-plbk-pscatwg}
    Let $\di{n}A\xrw{\ \di{n}f\ }\di{n}\q{n}C \xlw{\ g\ } C$ be a diagram in $\tawg{n}$ where $f: A\rw\q{n}C$ is a morphism in $\tawg{n-1}$ and consider the pullback in $\funcat{n-1}{\Cat}$
    \begin{equation*}
        P=\di{n}A \tiund{\di{n}\q{n}C} C\;.
    \end{equation*}
    \begin{itemize}
    \item [a)] Then $P\in \tawg{n}$ and
    \begin{equation}\label{eq-pro-spec-plbk-pscatwg0}
        \p{1,n}P = \p{1,n-1} A \tiund{\p{1,n-1}\q{n}C} \p{1,n} C\;.
    \end{equation}
    \item [b)]

    Consider the commutative diagram in $\tawg{n}$
    \begin{equation}\label{eq-pro-spec-plbk-pscatwg}
        \xymatrix@R=35pt @C=60pt{
        \di{n}A \ar^{\di{n}f}[r] \ar_{a}[d] & \di{n}\q{n} C \ar^{b}[d] & C \ar^{c}[d] \ar_{g}[l] \\
        \di{n}D \ar_{\di{n}h}[r] & \di{n}\q{n} F  & F \ar^{l}[l]
        }
    \end{equation}
    where $a,b,c$ are $\nequ$s. Then the induced maps of pullbacks
    \begin{equation*}
    (a,c): \di{n}A\tiund{\di{n}\q{n} C} C \rw \di{n}D \tiund{\di{n}\q{n} F} F
    \end{equation*}
    is a $n$-equivalence in $\tawg{n}$.

    \medskip
    \item [c)] If $f$ is an $(n-1)$-equivalence, $P\xrw{w}C$ is an $n$-equivalence.

    \end{itemize}
\end{proposition}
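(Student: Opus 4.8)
The plan is to argue by induction on $n$, at each stage proving (a) first, then (b) using (a), and then deducing (c) from (b). The whole argument runs parallel to the homotopically discrete case treated in Lemma~\ref{lem-spec-pulbk-eqr} and to the discrete-base case of Lemma~\ref{lem-crit-lev-nequiv}: pullbacks in $\funcat{n-1}{\Cat}$ are computed pointwise, the object $\di{n}\q{n}C$ is discrete in the top simplicial direction, and the feature not already present in those two lemmas is the explicit formula \eqref{eq-pro-spec-plbk-pscatwg0}.

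For part (a), for each $s$ the object $P_s$ is the pointwise pullback of $A\rw\q{n}C\lw C_s$. Since $A,C_0\in\cathd{n-1}$ and the pullback defining $P_0$ is over a structure that is discrete in the top direction, $P_0\in\cathd{n-1}$, exactly as in Lemma~\ref{lem-crit-lev-nequiv}~a) and \cite[Lemma 3.10 c)]{Pa1}; more generally the inductive hypothesis gives $P_s\in\tawg{n-1}$ for all $s$. To verify that the induced Segal maps $\hmu{s}$ of $P$ are $(n-1)$-equivalences I would, for each $s\geq 2$, use that the $\di{n}A$-coordinate is constant to identify $\pro{P_1}{P_0^d}{s}$ with the pullback of $A\rw\q{n}C\lw\pro{C_1}{C_0^d}{s}$ (the analogue of \eqref{eq1-crit-lev-nequiv}), form the commutative square comparing $\hmu{s}(P)$ with $\hmu{s}(C)$ over $\di{n}\q{n}C$, and conclude from the inductive hypothesis together with $\hmu{s}(C)$ being an $(n-1)$-equivalence. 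Finally, for \eqref{eq-pro-spec-plbk-pscatwg0} one applies $\p{1,n}$ levelwise: because $p$ commutes with fibre products over discrete objects (Lemma~\ref{lem-iso-cla-obj-fib-pro}~b)) and, by Remark~\ref{rem-spec-isofib}, the relevant comparison map $\p{2,n-1}\za_C\up{n}$ is the identity on objects, the levelwise computation collapses to $\p{1,n-1}A\tiund{\p{1,n-1}\q{n}C}\p{1,n}C$.

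For part (b), since $\di{n}$ is fully faithful one writes $b=\di{n}\beta$ and $a=\di{n}\alpha$; by Lemma~\ref{lem-flevel-fneq} (and its converse, which is immediate since $\di{n}$ is levelwise the identity on the top direction) the maps $\alpha$ and $\beta$ are $(n-1)$-equivalences. One then mimics the proof of Lemma~\ref{lem-crit-lev-nequiv}~c): using the hom-fibre description of $P$ and $P'$ — in which the constant $\di{n}A$-coordinate makes $P((x,y),(x',y'))$ a product involving $A(x,x')$ and $C(y,y')$ — the fact that $\alpha$ and $c$ are $(n-1)$-equivalences shows that $(a,c)$ is a local $(n-1)$-equivalence, while by part (a) the map $\p{1,n}(a,c)$ is the induced map of set-theoretic pullbacks $\p{1,n-1}\alpha\tiund{\p{1,n-1}\beta}\p{1,n}c$, which is an isomorphism — hence surjective — because $\p{1,n-1}\alpha$, $\p{1,n-1}\beta$ and $\p{1,n}c$ are isomorphisms by Proposition~\ref{pro-n-equiv}~a) applied to $a$, $b$, $c$. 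Proposition~\ref{pro-n-equiv}~b) then yields that $(a,c)$ is an $n$-equivalence.

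Part (c) follows formally from (b): if $f$ is an $(n-1)$-equivalence then $\di{n}f$ is an $n$-equivalence by Lemma~\ref{lem-flevel-fneq}, and applying (b) to the diagram with top row $\di{n}A\xrw{\di{n}f}\di{n}\q{n}C\xlw{g}C$, bottom row $\di{n}\q{n}C\xrw{\Id}\di{n}\q{n}C\xlw{g}C$, and vertical maps $\di{n}f,\Id,\Id$ (all $n$-equivalences) shows that the induced map of pullbacks is an $n$-equivalence; since $\di{n}\q{n}C\tiund{\di{n}\q{n}C}C=C$, this induced map is precisely the projection $w:P\rw C$. The step I expect to be the main obstacle is the identity \eqref{eq-pro-spec-plbk-pscatwg0} in part (a): because $p$ does not commute with arbitrary pullbacks, one must arrange the levelwise computation so that Lemma~\ref{lem-iso-cla-obj-fib-pro}~b) applies, which is exactly where the identity-on-objects/isofibration property of Remark~\ref{rem-spec-isofib} is needed.
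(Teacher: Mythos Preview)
Your overall architecture (induction on $n$; prove (a), then (b), then deduce (c)) matches the paper, and your treatment of (c) and of the formula \eqref{eq-pro-spec-plbk-pscatwg0} via Lemma~\ref{lem-iso-cla-obj-fib-pro}~b) is essentially the paper's Lemma~\ref{lem-p2-n-1}. However, the heart of your argument in (a) and (b) rests on a misreading of $\di{n}$ that produces genuine gaps.

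The functor $\di{n}$ is the discrete inclusion in the \emph{last} direction, not the first: for $A\in\tawg{n-1}$ one has $(\di{n}A)_s=\di{n-1}A_s$, which varies with $s$. So your sentence ``$P_s$ is the pointwise pullback of $A\rw\q{n}C\lw C_s$'' is wrong; the correct levelwise description is $P_s=\di{n-1}A_s\tiund{\di{n-1}\q{n-1}C_s}C_s$, with all three terms depending on $s$. Two consequences:
\begin{itemize}
\item In (a), your identification of $\pro{P_1}{P_0^d}{s}$ as ``the pullback of $A\rw\q{n}C\lw\pro{C_1}{C_0^d}{s}$'' fails. The paper instead computes $P_0^d=A_0^d=A_0^d\tiund{\di{n-1}\q{n-1}C_0^d}C_0^d$ (using Lemma~\ref{lem-spec-pulbk-eqr}) and obtains
\[
\tens{P_1}{P_0^d}\cong \di{n-1}(\tens{A_1}{A_0^d})\tiund{\di{n-1}\q{n-1}(\tens{C_1}{C_0^d})}(\tens{C_1}{C_0^d}),
\]
so $\hmu{2}(P)$ is the induced map on pullbacks of a \emph{three-column} diagram whose vertical arrows are the induced Segal maps of $\di{n}A$, $\di{n}\q{n}C$ and $C$. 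One then invokes the \emph{inductive hypothesis for part (b)}, not just closure of $(n-1)$-equivalences under products.
\item In (b), the hom-fibre $P((x_1,y_1),(x_2,y_2))$ is \emph{not} a product: it is the pullback $\di{n-1}A(x_1,x_2)\tiund{\di{n-1}(\q{n}C)(fx_1,fx_2)}C(y_1,y_2)$, and the middle term $(\q{n}C)(fx_1,fx_2)\in\tawg{n-2}$ is in general nontrivial. Your appeal to Lemma~\ref{lem-crit-lev-nequiv}~c), which relies on the base being genuinely discrete so that the hom-fibre splits as a product, does not apply here. The paper instead expresses $(a,c)((x_1,y_1),(x_2,y_2))$ as an induced map of such pullbacks and again applies the inductive hypothesis for (b).
\end{itemize}
A smaller point: for $P_0\in\cathd{n-1}$ you cite Lemma~\ref{lem-crit-lev-nequiv}~a) and \cite[Lemma 3.10 c)]{Pa1}, but those treat pullbacks over a \emph{discrete} base; here the base $\di{n-1}\q{n-1}C_0$ is only discrete in the top direction. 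The right tool is Lemma~\ref{lem-spec-pulbk-eqr}, which is precisely the homotopically-discrete analogue you need (and which also supplies $\p{n-1}P_0=A_0$, used to compute $P_0^d$).
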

\begin{proof}
By induction on $n$. When $n=1$, the maps $f,g,h,l$ are isofibrations since their target is a discrete category. Therefore, since $a,b,c$ are equivalences of categories, the induced map of pullbacks
\begin{equation*}
    (a,c):d A\tiund{d q C} C \rw d D \tiund{d q F} F
\end{equation*}
is an equivalence of categories and $p (d A\tiund{dqC}C) = A \tiund{qC} pC$.

Suppose, inductively, that the proposition holds for $n-1$.

\bk
a) We have
\begin{equation*}
    (\di{n}A\tiund{\di{n}\q{n} C} C)_0=\di{n-1}A_0\tiund{\di{n-1}\q{n-1} C_0} C_0\;.
\end{equation*}
Since, by hypothesis, $A \in\tawg{n-1}$ and $C$ is in $\tawg{n}$, by definition $A_0 \in\cathd{n-2}$ and $C_0 \in \cathd{n-1}$. Therefore by Lemma \ref{lem-spec-pulbk-eqr}
\begin{equation*}
    \di{n-1}A_0\tiund{\di{n-1}\q{n-1} C_0} C_0\in \cathd{n-1}\;.
\end{equation*}
Further, for each $k\geq 1$,
\begin{equation*}
    (\di{n}A\tiund{\di{n}\q{n} C} C)_k = \di{n-1}A_k\tiund{\di{n-1}\q{n-1} C_k} C_k
\end{equation*}
where, by hypothesis, $A_k \in\tawg{n-2}$ and $C_k \in \tawg{n-1}$. It follows by inductive hypothesis that
\begin{equation*}
    \di{n-1}A_k\tiund{\di{n-1}\q{n-1} C_k} C_k \in \tawg{n-1}\;.
\end{equation*}
To prove that $\di{n}A\tiund{\di{n}\q{n} C} C \in \tawg{n}$, it remains to show that its induced Segal maps $\hmu{s}$ are $\equ{n-1}$s for all $s\geq 2$. We show this for $s=2$, the case $s>2$ being similar. We have
\begin{equation*}
\begin{split}
    & \hmu{2}:(\di{n}A\tiund{\di{n}\q{n} C} C)_2 \rw \\
    & \rw \tens{(\di{n}A\tiund{\di{n}\q{n} C} C)_1}{(\di{n}A\tiund{\di{n}\q{n} C} C)_0^d}\;.
\end{split}
\end{equation*}
By Lemma \ref{lem-spec-pulbk-eqr},
\begin{equation*}
\begin{split}
    & (\di{n}A\tiund{\di{n}\q{n} C} C)_0^d= (\p{n-1}(\di{n-1}A_0\tiund{\di{n-1}\q{n-1} C_0} C_0))^d=A_0^d = \\
    & = A_0^d\tiund{\di{n-1}\q{n-1} C_0^d} C_0^d
\end{split}
\end{equation*}
where we used the fact that, since $\q{n-1}\di{n-1}=\Id$,
\begin{equation*}
\begin{split}
    & \di{n-1}\q{n-1}C_0^d= \di{n-1}\q{n-1}\di{n-1}...\di{1}p...\p{n-1}C_0= \\
   =\; &\di{n-1}\di{n-2}...\di{1}p...\p{n-1}C_0=C_0^d\;.
\end{split}
\end{equation*}
Recalling that $\q{n-1}$ commutes with pullbacks over discrete objects, we obtain
\begin{equation*}
\begin{split}
    & \tens{(\di{n}A\tiund{\di{n}\q{n}C}C)_1}{(\di{n}A\tiund{\di{n}\q{n}C}C)_0^d}= \\
    & \resizebox{1.0\hsize}{!}{$ =\tens{(\di{n-1}A_1\tiund{\di{n-1}\q{n-1}C_1}C_1)}{(A_0^d\tiund{\di{n-1}\q{n-1}C_0^d}C_0^d)}=$} \\
   & = \di{n-1}(\tens{A_1}{A_0^d})\tiund{\di{n-1}\q{n-1}(\tens{C_1}{C_0^d})}(\tens{C_1}{C_0^d})\;.
\end{split}
\end{equation*}
On the other hand
\begin{equation*}
    (\di{n}A\tiund{\di{n}\q{n}C} C)_2 = \di{n-1}A_2 \tiund{\di{n-1}\q{n-1}C_2} C_2 \;.
\end{equation*}
Hence we see that the map $\hmu{2}$ is the induced map on pullbacks from the diagram in $\tawg{n-1}$
\begin{equation}\label{eq1-pro-spec-plbk-pscatwg}
\xymatrix@C=15pt{
\di{n-1}A_2 \ar^{}[rr] \ar^{}[d] && \di{n-1}\q{n-1}C_2 \ar^{}[d] && C_2 \ar^{}[ll] \ar^{}[d]\\
\di{n-1}(\tens{A_1}{A_0^d}) \ar^{}[rr] && \di{n-1}\q{n-1}(\tens{C_1}{C_0^d}) && \tens{C_1}{C_0^d} \ar^{}[ll]
}
\end{equation}
In this diagram, the left and right vertical maps are $\nm$-equivalences since they are induced Segal maps of $\di{n}A$ and $C$ respectively. The map
\begin{equation*}
    \q{n-1}C_2 \rw \q{n-1}(\tens{C_1}{C_0^d}) = \tens{\q{n-1}C_1}{(\q{n-1}C_0)^d}
\end{equation*}
is the induced Segal map for $\q{n}C\in\tawg{n-1}$, and is therefore a $(n-2)$-equivalence. The central vertical map in \eqref{eq1-pro-spec-plbk-pscatwg} is therefore in particular a $(n-1)$-equivalence.  We can therefore apply the induction hypothesis b) to the diagram \eqref{eq1-pro-spec-plbk-pscatwg} and conclude that the induced map on pullbacks is a $\equ{n-1}$. That is $\hmu{2}$ is a $\nm$-equivalence.

Similarly one shows that $\hmu{s}$ is a $\nm$-equivalence for all $s \geq 2$. This concludes the proof that $\di{n}A \tiund{\di{n}\q{n}C} C \in \tawg{n}$ and \eqref{eq-pro-spec-plbk-pscatwg0} follows from Lemma \ref{lem-p2-n-1}.

\bk
b) We now show that $(a,c)$ is a $\nequ$. By Proposition \ref{pro-n-equiv} b), it is enough to show that it is a local $\equ{n-1}$ and that $\p{1,n}(a,c)$ is an isomorphism.

Let
\begin{equation*}
    (x_1,y_1),(x_2,y_2) \in (\di{n}A \tiund{\di{n}\q{n}C} C)_0^d = A_0^d \tiund{\di{n-1}\q{n-1}C_0^d} C_0^d.
\end{equation*}
Then the map
\begin{equation*}
    (a,c)((x_1,y_1),(x_2,y_2))
\end{equation*}
is the induced map on pullbacks of the diagrams
\begin{equation}\label{eq2-pro-spec-plbk-pscatwg}
\xymatrix@C=14pt{
\di{n}A(x_1,x_2) \ar^(0.38){f(x_1,x_2)}[rr] \ar_{a(x_1,x_2)}[d] && \di{n-1}\q{n-1}C(fx_1,fx_2) \ar^{b(fx_1,fx_2)}[d] && C(y_1,y_2) \ar_(0.38){g(y_1,y_2)}[ll] \ar_{c(y_1,y_2)}[d]\\
\di{n-1}D(a x_1,a x_2) \ar_(0.45){h(a x_1,a x_2)}[rr] && \di{n-1}\q{n-1}F(h a x_1,h a x_2) && C(cy_1,cy_2) \ar^(0.35){C(cy_1,cy_2)}[ll]
}
\end{equation}
The vertical maps in \eqref{eq2-pro-spec-plbk-pscatwg} are $\equ{n-1}$s. By the inductive hypothesis b), since $a,b,c$ are $n$-equivalences we conclude that the induced map of pullbacks is a $\equ{n-1}$. This shows that $(a,c)$ is a local $\equ{n-1}$. By Lemma \ref{lem-p2-n-1}, $\p{n-1}(a,c)=(\p{1,n}a,\p{1,n}c)$.

Applying the functor $\p{1,n}$ to the diagram  \eqref{eq-pro-spec-plbk-pscatwg} we obtain a commutative diagram in $\Set$
\begin{equation*}
\xymatrix{
\p{1,n-1}A \ar^{}[rr] \ar_{\p{1,n-1}a}[d] && \p{1,n-1}\q{n}C \ar^{\p{1,n}b}[d] && \p{1,n-1}\p{n}C \ar^{}[ll] \ar^{\p{1,n}c}[d]\\
\p{1,n-1}D \ar^{}[rr] && \p{1,n-1}\q{n}F && \p{1,n-1}\p{n}C \ar^{}[ll]
}
\end{equation*}
Since, by hypothesis, $a,b$ and $c$ are $\nequ$s, by Proposition \ref{pro-n-equiv} a) the vertical maps are isomorphisms, hence such is $\p{1,n}(a,c)$ as required. We conclude from Proposition \ref{pro-n-equiv} b) that $(a,c)$ is an $n$-equivalence.

\bk
c) By a), we have a commutative diagram in $\tawg{n}$
\begin{equation*}
    \xymatrix{
    \di{n}A \ar^{f}[rr] \ar_{f}[d] && \di{n}\q{n}C \ar@{=}[d] && C \ar_{g}[ll] \ar@{=}[d] \\
    \di{n}\q{n}C \ar@{=}[rr] && \di{n}\q{n}C  && C \ar^{g}[ll]
    }
\end{equation*}
in which the vertical maps are $n$-equivalences. It follows by b) that the induced map of pullbacks
\begin{equation*}
    P= \di{n}A\tiund{\di{n}\q{n}}C \xrw{w} \di{n}\q{n} C \tiund{\di{n}\q{n}} C = C
\end{equation*}
is an $n$-equivalence.

\end{proof}

\section{The category $\mathbf{\lta{n}}$.}\label{sec-cat-lta}
In this section we introduce the subcategory $\lta{n}$ of $\tawg{n}$. We will show in Section \ref{sec-wg-tam-to-psefun} how to associate to this category a Segalic pseudo-functor, which in turn will lead to the contruction of the rigidification functor $Q_n$.

 The main result of this section is  Theorem \ref{the-repl-obj-1} establishing that that every object of $\tawg{n}$ can be approximated up to an $n$-equivalence with an object of $\tawg{n}$.
\begin{definition}\label{def-ind-sub-ltawg}
    Define inductively the subcategory $\lta{n}\subset\tawg{n}$. For $n=2$, $\lta{2}=\tawg{2}$. Suppose we defined $\lta{n-1}\subset \tawg{n}$. Let $\lta{n}$ be the full subcategory of $\tawg{n}$ whose objects $X$ are such that
    \begin{itemize}
      \item [i)] $X_k\in\lta{n-1}$ for all $k\geq 0$.\bk

      \item [ii)] The maps in $\funcat{n-2}{\Cat}$
    \begin{equation*}
        v_k: J_{n-1} X_k \rw J_{n-1}(\pro{X_1}{\di{n-1}\p{n-1}X_0}{k})
    \end{equation*}
    are levelwise equivalences of categories for all $k\geq 2$\bk

     \item [iii)] $\p{n}X\in\catwg{n-1}$.
    \end{itemize}
\end{definition}
\begin{definition}\label{def-ltawg-equiv}
    Let $\lnta{n}{n}$ be the full subcategory of $\tawg{n}$ whose objects $X$ are such that
    \begin{itemize}
      \item [i)] The maps in $\funcat{n-2}{\Cat}$
      \begin{equation*}
        v_k: J_{n-1}X_k \rw J_{n-1}(\pro{X_1}{\di{n-1}\p{n-1}X_0}{k})
      \end{equation*}
      are levelwise equivalences of categories for all $k\geq 2$\bk

      \item [ii)] $\p{n}X\in\catwg{n-1}$.
    \end{itemize}
\end{definition}
\begin{lemma}\label{lem-lev-wg-pscat} 
    Let $X\in\tawg{n}$. Then $X\in \lta{n}$ if and only if
    \begin{itemize}
      \item [a)] $X\in \lnta{n}{n}$.\bk

      \item [b)] For each $1 < r \leq n-1$ and each $k_1,...,k_{n-r}\in\Dop$, $X_{k_1,...,k_{n-r}}\in \lnta{r}{r}$
    \end{itemize}
\end{lemma}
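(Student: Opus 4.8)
The plan is to prove the equivalence by induction on $n$, simply unwinding the two nested definitions. The observation driving the argument is that $\lnta{n}{n}$ (Definition \ref{def-ltawg-equiv}) is obtained from $\lta{n}$ (Definition \ref{def-ind-sub-ltawg}) by dropping condition i), i.e. the requirement that $X_k\in\lta{n-1}$ for all $k$; so the lemma is exactly the assertion that this dropped condition is equivalent to the levelwise conditions collected in b).

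First I would settle the base case $n=2$. There $\lta{2}=\tawg{2}$ by definition and b) is vacuous, so it suffices to see that $\lnta{2}{2}=\tawg{2}$. This is immediate: for $X\in\tawg{2}$ the maps $v_k$ of Definition \ref{def-ltawg-equiv} are precisely the induced Segal maps $\hmu{k}$ (as recorded in Example \ref{ex-wg-2-3}), hence equivalences of categories, and $\p{2}X$ lies automatically in $\Cat=\catwg{1}$.

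For the inductive step, assume the statement in dimension $n-1$ and fix $X\in\tawg{n}$. Throughout I would use the identification $X_{k,s_1,\ldots,s_j}=(X_k)_{s_1,\ldots,s_j}$, which follows from $(J_nX)_{k_1\ldots k_{n-1}}=(J_{n-1}X_{k_1})_{k_2\ldots k_{n-1}}$ (see the discussion following Notation \ref{not-ner-func-dirk}). In the forward direction: if $X\in\lta{n}$, then conditions ii) and iii) of Definition \ref{def-ind-sub-ltawg} are exactly the defining conditions of $\lnta{n}{n}$, giving a); and condition i) gives $X_k\in\lta{n-1}$, whence $X_k\in\lnta{n-1}{n-1}$ (this is the case $r=n-1$ of b)) and, by the inductive hypothesis applied to $X_k$, the memberships $X_{k,s_1,\ldots,s_{n-1-r}}=(X_k)_{s_1,\ldots,s_{n-1-r}}\in\lnta{r}{r}$ for $1<r\leq n-2$, which together with the previous case yield b). In the converse direction: a) provides conditions ii) and iii) of Definition \ref{def-ind-sub-ltawg}, so I only need condition i), namely $X_k\in\lta{n-1}$ for all $k$. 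By the inductive hypothesis applied to $X_k\in\tawg{n-1}$ this is equivalent to $X_k\in\lnta{n-1}{n-1}$ together with $(X_k)_{s_1,\ldots,s_{n-1-r}}\in\lnta{r}{r}$ for $1<r\leq n-2$; the first is the case $r=n-1$ of b), and the second follows from the cases $1<r\leq n-2$ of b) by fixing the first of the $n-r$ indices to be $k$ and using the index identity again.

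The only point requiring care is the bookkeeping of the ranges of $r$ and the lengths of the multi-indices when passing between dimension $n$ and dimension $n-1$, together with the correct use of $X_{k,s_1,\ldots}=(X_k)_{s_1,\ldots}$. There is no genuine mathematical obstacle; all the content is the remark that $\lta{n}$ and $\lnta{n}{n}$ differ precisely by the levelwise clause, which the induction peels off one simplicial direction at a time.
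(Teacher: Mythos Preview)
Your proof is correct and follows essentially the same inductive strategy as the paper's own proof: both argue by induction on $n$, observe that $\lta{n}$ and $\lnta{n}{n}$ differ exactly by the levelwise clause i) of Definition \ref{def-ind-sub-ltawg}, and then peel off one simplicial index at a time via the inductive hypothesis applied to $X_k$. Your version is in fact a little more explicit than the paper's---you spell out why $\lnta{2}{2}=\tawg{2}$ in the base case and track the index ranges carefully in the inductive step---whereas the paper's forward direction simply iterates the definition to get $X_{k_1\cdots k_{n-r}}\in\lta{r}\subset\lnta{r}{r}$ directly rather than invoking the inductive hypothesis; but this is a cosmetic difference, not a mathematical one.
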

\begin{proof}
By induction on $n$. It holds for $n=2$ since $\lta{2}=\Lb{2}\tawg{2}=\tawg{2}$. Suppose it holds for $(n-1)$ and let $X\in\lta{n}$. Then by definition $X\in\Lb{n}\tawg{n}$. Also by definition $X_{k_1\cdots k_{n-r}}\in\lta{r}$ and therefore $X_{k_1\cdots k_{n-r}}\in\Lb{r}\tawg{r}$.

Conversely, suppose that $X\in\tawg{n}$ satisfies a) and b). By a), $v_k$ is a levelwise equivalence of categories and $\p{n}X\in\catwg{n-1}$; by b), $X_k\in\Lb{n-1}\tawg{n-1}$ and, further, $X_k$ itself satisfies b). Thus by induction hypothesis applied to $X_k$ we conclude that $X_k\in\lta{n-1}$. By definition, this shows that $X\in\lta{n}$.
\end{proof}

\begin{corollary}\label{cor-lev-wg-ncat}
    Let $X\in\catwg{n}$. Then $X\in \lta{n}$.
\end{corollary}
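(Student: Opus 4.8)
The plan is to induct on $n$, the base case $n=2$ being trivial since $\lta{2}=\tawg{2}$ and $\catwg{2}\subset\tawg{2}$ by Remark \ref{rem-wg-ps-cat}. For the inductive step I would use the characterization of $\lta{n}$ supplied by Lemma \ref{lem-lev-wg-pscat}: for $X\in\tawg{n}$ one has $X\in\lta{n}$ if and only if $X\in\lnta{n}{n}$ and $X_{k_1,\ldots,k_{n-r}}\in\lnta{r}{r}$ for all $1<r\leq n-1$ and all $k_1,\ldots,k_{n-r}\in\Dop$. Thus the proof splits into verifying these two conditions for a given $X\in\catwg{n}$.

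The second condition is handled by the inductive hypothesis together with closure properties. Since $\catwg{n}$ is by definition a full subcategory of $\funcat{}{\catwg{n-1}}$, each $X_k$ lies in $\catwg{n-1}$, and iterating the observation $X_{k_1,\ldots,k_{n-r}}\in\catwg{r}$ for every $1<r\leq n-1$; by induction $X_{k_1,\ldots,k_{n-r}}\in\lta{r}$, hence $X_{k_1,\ldots,k_{n-r}}\in\lnta{r}{r}$ by Lemma \ref{lem-lev-wg-pscat}. There remains condition a), $X\in\lnta{n}{n}$, which by Definition \ref{def-ltawg-equiv} asks that $\p{n}X\in\catwg{n-1}$ and that the maps $v_k:J_{n-1}X_k\rw J_{n-1}(\pro{X_1}{\di{n-1}\p{n-1}X_0}{k})$ are levelwise equivalences of categories for $k\geq 2$. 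The first of these is immediate, since the truncation functor $\p{n}:\catwg{n}\rw\catwg{n-1}$ of Definition \ref{def-n-equiv}~d) coincides with the $\tawg{n}$-truncation of Definition \ref{def-wg-ps-cat}, both being induced by applying $p$ levelwise to $J_nX$.

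The real content is the statement about $v_k$. Because $X$ is a weakly globular $n$-fold category its Segal maps are isomorphisms, so $X_k\cong\pro{X_1}{X_0}{k}$; as $J_{n-1}$ is a composite of nerve functors and of the isomorphisms $\xi_i$ it preserves pullbacks, and therefore $v_k$ is identified with the map of $k$-fold pullbacks in $\funcat{n-2}{\Cat}$ induced by $J_{n-1}(\zgu{n-1}_{X_0}):J_{n-1}X_0\rw J_{n-1}\di{n-1}\p{n-1}X_0$. Since $X_0\in\cathd{n-1}$, every category $(J_{n-1}X_0)_{\us}$ is an equivalence relation and $(J_{n-1}\di{n-1}\p{n-1}X_0)_{\us}$ is its discretization, so $J_{n-1}(\zgu{n-1}_{X_0})$ is a levelwise equivalence of categories. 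To conclude I would first show $v_k$ is an $(n-1)$-equivalence: the induced Segal map $\hmu{k}:X_k\rw\pro{X_1}{X_0^d}{k}$, which is an $(n-1)$-equivalence by the induced Segal condition of $\catwg{n}$, factors as $w_k\circ v_k$ with $w_k$ induced by the further discretization $\di{n-1}\p{n-1}X_0\rw X_0^d$; one checks that this last map is an $(n-1)$-equivalence and, its codomain being the discrete object $X_0^d$, that $w_k$ is an $(n-1)$-equivalence as well, using Proposition \ref{pro-spec-plbk-pscatwg} and Lemma \ref{lem-crit-lev-nequiv}. Then Proposition \ref{pro-n-equiv}~c) gives that $v_k$ is an $(n-1)$-equivalence, and a final appeal to Proposition \ref{pro-crit-lev-nequiv}—whose hypotheses b) and c) hold here because the relevant truncations of the $0$-components of the source and target of $v_k$ both reduce to $X_0^d$—upgrades $v_k$ to a levelwise equivalence of categories.

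The main obstacle is precisely this last step. Replacing the base object of a pullback by an equivalent one does not in general preserve equivalences (a direct levelwise check would require the source and target maps $X_1\rw X_0$ to be isofibrations at each level of $J_{n-1}$, which is not evident), so the levelwise equivalence $J_{n-1}(\zgu{n-1}_{X_0})$ cannot by itself force $v_k$ to be one. The argument has to route through the already-established induced Segal condition of $\catwg{n}$ and the homotopical discreteness of $X_0$, and then to verify carefully the hypotheses of Proposition \ref{pro-crit-lev-nequiv} for the maps $v_k$; this bookkeeping of truncations is where the care is needed.
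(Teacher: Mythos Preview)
Your plan via Lemma \ref{lem-lev-wg-pscat} and the handling of condition~b) match the paper. The divergence is in how you establish $X\in\lnta{n}{n}$, and here there is both a gap and a missed simplification.

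The gap is that Propositions \ref{pro-n-equiv} and \ref{pro-crit-lev-nequiv} are stated for morphisms in $\tawg{n-1}$, so to run your 2-of-3 argument and then upgrade, you first need $\pro{X_1}{\di{n-1}\p{n-1}X_0}{k}\in\tawg{n-1}$. You invoke Proposition \ref{pro-spec-plbk-pscatwg}, but that proposition concerns pullbacks of the specific shape $\di{n-1}A\tiund{\di{n-1}\q{n-1}C}C$, where the middle term is $\di{n-1}\q{n-1}$ of the \emph{right-hand} object $C$; in your pullback the middle is $\di{n-1}\q{n-1}X_0$ while the outer objects are $X_1$, so the proposition does not apply. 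Likewise Lemma \ref{lem-crit-lev-nequiv} needs a discrete base, which $\di{n-1}\p{n-1}X_0$ is not. Without this membership the factorisation $\hmu{k}=w_k\circ v_k$ is only a diagram in $\funcat{n-2}{\Cat}$, and your appeal to Proposition \ref{pro-crit-lev-nequiv} is not justified. (Your claim that the relevant truncations ``both reduce to $X_0^d$'' is also not what hypotheses b) and c) of that proposition ask: they require agreement of \emph{partial} truncations $\p{n-2}(\mi)_0$, $\p{n-r-2}(\mi)_{k_1\ldots k_r 0}$, not full discretisation.)

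The paper avoids all of this by a different and much shorter inductive argument for the claim $X\in\catwg{n}\Rightarrow X\in\lnta{n}{n}$, whose key input is Proposition \ref{pro-property-WG-nfol}: the second-direction nerve $\Nu{2}$ restricts to $\catwg{n}\rw\funcat{}{\catwg{n-1}}$, so each transposed slice $X_k\up{2}$ lies in $\catwg{n-1}$. By the inductive hypothesis $X_k\up{2}\in\lnta{n-1}{n-1}$, which says that for every $s\geq 2$ the map
\[
J_{n-2}X_{sk}\;\longrightarrow\; J_{n-2}\bigl(\pro{X_{1k}}{\di{n-2}\p{n-2}X_{0k}}{s}\bigr)
\]
is a levelwise equivalence of categories. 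But this map, for fixed $k$, is exactly the $k$-th component of your $v_s$ (since $(\di{n-1}\p{n-1}X_0)_k=\di{n-2}\p{n-2}X_{0k}$). Ranging over $k$ gives that $J_{n-1}v_s$ is a levelwise equivalence directly, with no need to know the target lies in $\tawg{n-1}$, no factorisation through $w_s$, and no appeal to Proposition \ref{pro-crit-lev-nequiv}. The remaining condition $\p{n}X\in\catwg{n-1}$ is immediate, and condition~b) of Lemma \ref{lem-lev-wg-pscat} follows as you say. So the missing idea is precisely the transposition via $\Nu{2}$.
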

\begin{proof}
We claim that $X\in\lnta{n}{n}$. We prove the claim by induction on $n$. For $n=2$, if $X\in\catwg{2}$, for each $s\geq 2$ there is an equivalence of categories
\begin{equation*}
    X_s\simeq \pro{X_1}{X_0^d}{s}=\pro{X_1}{\di{1}\p{1}X_0}{s}\;.
\end{equation*}
Thus, by definition, $X\in\lnta{2}{2}$. Suppose, inductively, that the statement holds for $n-1$ and let $X\in\catwg{n}$. Then, by Proposition \ref{pro-property-WG-nfol}, $X_k\up{2}\in \catwg{n-1}$ for all $k\geq 0$. Hence by inductive hypothesis $X_k\up{2}\in\lnta{n-1}{n-1}$. From the definition of $X_k\up{2}$ this means that, for all $s\geq 2$ and $k\geq 0$, the map
\begin{equation*}
   J_{n-2} X_{sk}\rw J_{n-2}(\pro{X_{1k}}{\di{n-2}\p{n-2}X_{0k}}{s})
\end{equation*}
is a levelwise equivalence of categories. Since this holds for every $k\geq 0$, this implies that the map
\begin{equation*}
   J_{n-1} X_s \rw J_{n-1}(\pro{X_1}{\di{n-1}\p{n-1}X_0}{s})
\end{equation*}
is a levelwise equivalence of categories. Since $\p{n}X\in\catwg{n-1}$ this implies that $X\in\lnta{n}{n}$, as claimed.

Since $X\in\catwg{n}$, by definition $X_{\seq{k}{1}{n-r}}\in \catwg{r}$, thus, from above  $X_{\seq{k}{1}{n-r}}\in \lnta{r}{r}$. Hence, by Lemma \ref{lem-lev-wg-pscat}, $X\in\lta{n}$.
\end{proof}
\begin{remark}\label{rem-lev-wg-ncat}
    If $X\in\catwg{n}$, not only the map
    \begin{equation*}
        J_{n-1}X_k\rw J_{n-1}(\pro{X_1}{\di{n-1}\p{n-1}X_0}{k})
    \end{equation*}
    is a levelwise equivalence of categories, but so is the map
    \begin{equation}\label{eq-rem-lev-wg-ncat}
        J_{n-1} X_k\rw J_{n-1}(\pro{X_1}{\di{j,n-1}\p{j,n-1}X_0}{k})
    \end{equation}
    where $\di{j,n-1}=\di{n-1}...\di{j}$ for $j=1,...,n-1$. We can see this by induction on $n$. The case $n=2$ is as in proof of Corollary \ref{cor-lev-wg-ncat}. Suppose, inductively, that the statement holds for $n-1$ and let $X\in\catwg{n}$. Then, by Proposition \ref{pro-property-WG-nfol}, $X_k\up{2}\in\catwg{n-1}$ for all $k\geq 0$. Hence by inductive hypothesis, for all $k\geq 2$ the map
    \begin{equation*}
        J_{n-2} X_{sk}\rw J_{n-2}(\pro{X_{1k}}{\di{j-1,n-2}\p{j-1,n-2}X_{0k}}{k})
    \end{equation*}
    is a levelwise equivalence of categories. Since this holds for all $k\geq 0$, this means that \eqref{eq-rem-lev-wg-ncat} is a levelwise equivalence of categories.
\end{remark}
The following lemma is used in the proof of the main result of this section, Theorem \ref{the-repl-obj-1}.
\begin{lemma}\label{lem-jn-alpha}
    Let $B \xrw{\pt_0} X \xlw{\pt_1}B$ be a diagram in $\tawg{n}$ with $X\in\cathd{n}$ and let
    \begin{equation*}
        A\xrw{\za}\tens{B}{X}\xrw{\zb}\tens{B}{X^d}
    \end{equation*}
    be maps $\tawg{n}$ (where $\zb$ is induced by the map $\zg: X\rw X^d$)  such that\mk
    \begin{itemize}
      \item [i)] $\p{n-1}\za_0$, and $\p{n-r-1}\za_{k_1...k_r\,0}$  are isomorphisms for all $1\leq r< n-1$.\mk
      \item [ii)] $(\tens{B}{X})^d_0\cong\tens{B_0^d}{X_0^d}$\mk

     \nid             $(\tens{B}{X})^d_{k_1...k_r\,0}\cong\tens{B^d_{k_1...k_r\,0}}{X^d_{k_1...k_r\,0}}$\mk
      \item [iii)] $\zb\za$ is a $n$-equivalence.\mk
    \end{itemize}
    Then $J_{n}\za$ is a levelwise equivalence of categories.
\end{lemma}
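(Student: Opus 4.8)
The plan is to obtain the conclusion from Proposition~\ref{pro-crit-lev-nequiv} applied to the morphism $\za:A\rw\tens{B}{X}$ of $\tawg{n}$ (the hypotheses presuppose $n\geq 2$). Conditions (b) and (c) of that proposition are the isomorphisms $\p{n-1}A_0\cong\p{n-1}(\tens{B}{X})_0$ and $\p{n-r-1}A_{k_1\dots k_r\,0}\cong\p{n-r-1}(\tens{B}{X})_{k_1\dots k_r\,0}$ for $1\leq r<n-1$, and these are precisely hypothesis (i), since there $\p{n-1}\za_0$ and $\p{n-r-1}\za_{k_1\dots k_r\,0}$ are assumed invertible. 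Thus the whole problem reduces to verifying condition (a): that $\za$ is an $n$-equivalence.

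For that, the key step is to prove that the map $\zb:\tens{B}{X}\rw\tens{B}{X^d}$ induced by $\zg:X\rw X^d$ is an $n$-equivalence. Granting this, hypothesis (iii) gives that $\zb\za$ is an $n$-equivalence, so Proposition~\ref{pro-n-equiv}(c) forces $\za$ to be an $n$-equivalence and we are done. To see that $\zb$ is an $n$-equivalence I would use Proposition~\ref{pro-n-equiv}(b): it suffices that $\zb$ be a local $(n-1)$-equivalence and that $\p{1,n}\zb$ be surjective. For surjectivity, hypothesis (ii) identifies $(\tens{B}{X})_0^d$, and hence $\p{1,n}(\tens{B}{X})$, with $\p{1,n-1}B_0\tiund{\p{1,n-1}X_0}\p{1,n-1}B_0$; since $X^d$ is by construction discrete on the set $\p{1,n}X=\p{1,n-1}X_0$, the map $\p{1,n}\zb$ is in fact a bijection. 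For the local condition, fix $a,b\in(\tens{B}{X})_0^d\cong\tens{B_0^d}{X_0^d}$ (hypothesis (ii) again); a direct fibrewise computation of the pullback identifies $(\tens{B}{X})(a,b)$ with a fibre product $B(\cdot,\cdot)\tiund{X(x,x')}B(\cdot,\cdot)$ and $(\tens{B}{X^d})(\zb a,\zb b)$ with $B(\cdot,\cdot)\tiund{X^d(x,x')}B(\cdot,\cdot)$, the map $\zb(a,b)$ being the one induced by $X(x,x')\rw X^d(x,x')$. Since $X\in\cathd{n}$, the canonical map $X\rw X^d$ is an $n$-equivalence (recall~\cite{Pa1}), hence $X(x,x')\rw X^d(x,x')$ is an $(n-1)$-equivalence, and one concludes that $\zb(a,b)$ is an $(n-1)$-equivalence.

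The main obstacle is exactly this last step: passing from the $(n-1)$-equivalence $X(x,x')\rw X^d(x,x')$ (with discrete target) to the fact that the induced map of fibre products over it is again an $(n-1)$-equivalence. Here one genuinely uses that $X$ is \emph{homotopically} discrete: the component $X(x,x')$ is empty unless $x=x'$, and $X(x,x)$ is $(n-1)$-equivalent to the terminal object, so that both fibre products are compatibly $(n-1)$-equivalent to the product $B(\cdot,\cdot)\times B(\cdot,\cdot)$. Making this precise requires a stability statement for $(n-1)$-equivalences under pullback along maps with (homotopically) discrete target; I would establish it either by an induction on $n$ run alongside the present argument, or by reducing it to Lemma~\ref{lem-spec-pulbk-eqr} and Proposition~\ref{pro-spec-plbk-pscatwg} of Section~\ref{sec-func-qn}. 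The second isomorphism in hypothesis (ii) is what ensures that the deeper discretizations $(\tens{B}{X})^d_{k_1\dots k_r\,0}$ retain the product form needed for this fibrewise analysis, and for the $\p{1,n}$-computation, to go through at every level.
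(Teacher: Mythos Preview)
Your reduction to Proposition~\ref{pro-crit-lev-nequiv} is exactly right, and agrees with the paper: hypotheses (b) and (c) there are precisely hypothesis (i) here, so everything hinges on proving that $\za$ is an $n$-equivalence.

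The gap is in your strategy for that last point. You want to show that $\zb$ is an $n$-equivalence on its own and then cancel via Proposition~\ref{pro-n-equiv}(c). But $\zb$ is \emph{not} an $n$-equivalence independently of the existence of $\za$; the essential surjectivity of its local part genuinely uses hypothesis (iii). This is visible already in the base case $n=2$: the map
\[
s:\;B(a,a')\tiund{X(\pt_0 a,\pt_0 a')}B(b,b')\;\rw\;B(a,a')\tiund{X(\pt_0 a,\pt_0 a')^d}B(b,b')
\]
is fully faithful because $X(\pt_0 a,\pt_0 a')\in\cathd{}$, but there is no reason for it to be essentially surjective without a fibrancy assumption on $B\rw X$. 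What \emph{does} make it essentially surjective is that $s\circ\za(x,x')$ is an equivalence (this is exactly hypothesis (iii) localised), hence essentially surjective; fully faithfulness of $s$ then forces $s$, and therefore $\za(x,x')$, to be equivalences. Your proposed fixes do not close this: Lemma~\ref{lem-spec-pulbk-eqr} and Proposition~\ref{pro-spec-plbk-pscatwg} concern pullbacks along $\zgu{n}:C\rw\di{n}\q{n}C$, which has a different shape from $B\tiund{X}B\rw B\tiund{X^d}B$; and ``running an induction alongside'' on $\zb$ cannot get started because hypotheses (i) and (ii) are tailored to $\za$, not to $\zb$. (Also, the claim that $X(x,x')$ is empty unless $x=x'$ is not correct: it is nonempty precisely when $x,x'$ have the same image in $X^d$.)

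The paper's proof avoids the stability problem entirely by inducting on the statement of the lemma itself. One checks that for each pair $x,x'\in A_0^d$ the factorisation
\[
A(x,x')\xrw{\;\za(x,x')\;}B(a,a')\tiund{X(\pt_0 a,\pt_0 a')}B(b,b')\xrw{\;s\;}B(a,a')\tiund{X(\pt_0 a,\pt_0 a')^d}B(b,b')
\]
again satisfies hypotheses (i), (ii), (iii) at level $n-1$: (i) and (ii) come from restricting the given hypotheses, and (iii) is just the fact that $(\zb\za)(x,x')=s\circ\za(x,x')$ is an $(n-1)$-equivalence because $\zb\za$ is an $n$-equivalence. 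The inductive conclusion gives that $J_{n-1}\za(x,x')$ is a levelwise equivalence, hence $\za$ is a local $(n-1)$-equivalence; combined with hypothesis (i), Proposition~\ref{pro-n-equiv}(b) gives that $\za$ is an $n$-equivalence, and Proposition~\ref{pro-crit-lev-nequiv} finishes.
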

\begin{proof}
Let $x,x'\in A_0^d$. By hypothesis i) and ii),
\begin{equation*}
    A_0^d\cong(\tens{B}{X})^d_0=\tens{B_0^d}{X_0^d}\subset \tens{B_0^d}{X^d}
\end{equation*}
where the last inclusion holds because the map $\zg_0:X_0\rw (X^d)_0$ factors through $X_0^d$. Let $\za x=(a,b)\in \tens{B_0^d}{X_0^d}$ and $\za x'=(a',b')$. We have
\begin{flalign*}
&(\tens{B}{X})(x,x')=B(a,a')\tiund{X(\pt_0 a,\pt_0 a')}B(b,b')&\\
&(\tens{B}{X^d})(\zb x,\zb x')=B(a,a')\tiund{X^d(\zg a,\zg a')}B(b,b')=B(a,a')\times B(b,b')&
\end{flalign*}
where in the last equality we used the fact that $da=da'$ so that $X^d(\zg a,\zg a')=\{\cdot\}$ since $X^d$ is discrete.

The map $\zg:X\rw X^d$ factors as
\begin{equation*}
    X\rw \di{n}...\di{2}\p{2}...\p{n}X\rw \di{n}...\di{2}d p...\p{n}X=X^d
\end{equation*}
and we have
\begin{equation*}
    (\di{n}...\di{2}\p{2}...\p{n}X)(p \pt_0 a,p \pt_0 a')=X(\pt_0 a,\pt_0 a')^d\;.
\end{equation*}
Thus the map $\zb(x,x')$ factors as
\begin{equation*}
\resizebox{1.0\hsize}{!}{$
    B(a,a')\tiund{X(\pt_0 a,\pt_0 a')}B(b,b')\xrw{s} B(a,a')\tiund{X(\pt_0 a,\pt_0 a')^d}B(b,b')\xrw{t}B(a,a')\times B(b,b')\;.$}
\end{equation*}
On the other hand, since $\p{2,n}X\in\cathd{}$, the set $\p{2,n}X(p\pt_0 a,p\pt_0 a')$ contains only one element. Thus $X(\pt_0 a,\pt_0 a')^d$ is the terminal object, and $t=\Id$.
Since, by hypothesis, $\zb\za$ is a $n$-equivalence $\zb\za(x,y)$ is a $\nm$-equivalence, so by the above the composite
\begin{equation}\label{eq1-lem-jn-alpha}
    A(x,y)\xrw{\za(x,y)} B(a,a')\tiund{X(\pt a,\pt a')}B(b,b')\xrw{s} B(a,a')\tiund{X(\pt a,\pt a')^d}B(b,b')
\end{equation}
is a $\equ{n-1}$.

\smallskip
We now proceed to the rest of the proof by induction on $n$. When $n=2$, since $X(\pt_0 a,\pt_0 a')\in\cathd{}$, the map $s$ is fully faithful. Since $s\za(x,y)$ is an equivalence of categories, it is essentially surjective on objects, and therefore $s$ is essentially surjective on objects. It follows that $s$ is an equivalence of categories, and therefore such is $\za(x,y)$. Since by hypothesis $p\za_0$ is a bijection, the map $\p{2}\za$ is bijective on objects, thus $p\p{2}\za$ is surjective. From Proposition \ref{pro-n-equiv} we deduce that $\za$ is a 2-equivalence.

 Thus $\za$ satisfies the hypotheses of Proposition \ref{pro-crit-lev-nequiv} and we conclude that it is a levelwise equivalence of categories.

Suppose, inductively, that the lemma holds for $n-1$. We show that the maps \eqref{eq1-lem-jn-alpha} satisfy the inductive hypothesis.

Since $s\za(x,y)$ is a $\equ{n-1}$, inductive hypothesis iii) holds. Since by hypothesis $\p{n-1}\za_0$ is an isomorphism, so is
\begin{equation*}
    \p{n-2}\za_0(x,y)=(\p{n-1}\za_0)(x,y)
\end{equation*}
as well as
\begin{equation*}
    \p{n-r-2}\{\za_{k_1...k_r\,0}(x,y)\}=(\p{n-r-1}\za_{k_1...k_r\,0})(x,y)\;.
\end{equation*}
Thus inductive hypothesis i) holds for the maps \eqref{eq1-lem-jn-alpha}. Further, using hypothesis ii) we compute
\begin{equation*}
\begin{split}
    & ((\tens{B}{X})(x,y))^d_0=(\tens{B_1}{X_1})^d_0(x,y)= \\
    =\ & (\tens{B^d_{10}}{X^d_{10}})(x,y)=\tens{B(x,y)^d_0}{X(x,y)^d_0}
\end{split}
\end{equation*}
and similarly
\begin{equation*}
\begin{split}
    & ((\tens{B}{X})(x,y))^d_{k_1...k_r\,0}=(\tens{B}{X})^d_{1 k_1...k_r\,0}(x,y)= \\
    \cong\ & \tens{B^d_{1  {k_1...k_r\,0}}(x,y)}{X^d_{1 {k_1...k_r\,0}}(x,y)}=\\
    =\ &\tens{B_{k_1...k_r\,0}(x,y)^d}{X_{k_1...k_r\,0}(x,y)^d}\;.
\end{split}
\end{equation*}
Thus the inductive hypothesis ii) holds for the maps \eqref{eq1-lem-jn-alpha}.

We conclude by induction that $J_{n-1}\za(x,y)$ is a levelwise equivalence of categories, hence in particular it is a $\equ{n-1}$. Since by hypothesis $\p{n-1}\za_0$ is an isomorphism, so is
\begin{equation*}
    (\p{2,n}\za)_0= \p{1,n-1}\za_0
\end{equation*}
so that $\p{1,n}\za$ is surjective. Since, from above, $\za$ is a local $\nm$-equivalence, from Proposition \ref{pro-n-equiv} it follows that $\za$ is a $\nequ$. Together with hypothesis i) this shows that $\za$ satisfies the hypotheses of Proposition \ref{pro-crit-lev-nequiv} and we conclude that $J_{n}\za$ is a levelwise equivalence of categories.
\end{proof}
\begin{theorem}\label{the-repl-obj-1}
    Let $X\in \tawg{n}$, and let $r:Z\rw \di{n} \qn X$ be a map in $\tawg{n-1}$ with $Z\in\catwg{n-1}$ and consider the pullback in $\funcat{n-1}{\Cat}$
    \begin{equation*}
        \xymatrix@R=35pt @C=40pt{
        P \ar^{w}[r] \ar^{}[d] & X \ar^{\zgu{n}}[d] \\
        \dn Z \ar_{\dn r}[r] & \dn \qn X
        }
    \end{equation*}
    Then
    \begin{itemize}
      \item [a)] For all $ k\geq 2$
      \begin{equation*}
        \za: J_{n-1}P_k\rw J_{n-1}(\pro{P_1}{\di{n-1}\p{n-1}P_0}{k})
      \end{equation*}
      is a levelwise equivalence of categories.\mk

      \item [b)] $\q{n}P$ and $\p{n}P$ are in $\catwg{n-1}$.\mk

      \item [c)] $P\in\lnta{n}{n}$.\mk

      \item [d)] If $r$ is a $\nm$-equivalence then $w$ is a $\nequ$.\mk

      \item [e)] $P\in\lta{n}$.
    \end{itemize}
\end{theorem}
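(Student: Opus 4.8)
The proof is by induction on $n$, establishing all five items simultaneously; the tools are Proposition~\ref{pro-spec-plbk-pscatwg}, the inductive hypothesis applied at each simplicial level, and Lemma~\ref{lem-jn-alpha}. First observe that $P=\dn Z\tiund{\dn\qn X}X$ is a pullback of the shape treated in Proposition~\ref{pro-spec-plbk-pscatwg}, with $A=Z$, $C=X$ and $f=r$ (this is legitimate since $Z\in\catwg{n-1}\subset\tawg{n-1}$). That proposition immediately yields $P\in\tawg{n}$, the identity $\p{1,n}P\cong\p{1,n-1}Z\tiund{\p{1,n-1}\qn X}\p{1,n}X$, and part (d), which is precisely Proposition~\ref{pro-spec-plbk-pscatwg}(c). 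It also disposes of the base case $n=2$: there $\lta{2}=\tawg{2}$ forces (e), $\catwg{1}=\Cat$ makes (b) automatic, and (a),(c) follow from $P\in\tawg{2}$ together with Lemma~\ref{lem-spec-pulbk-eqr}, which gives $P_0\in\cathd{1}$ and $pP_0=Z_0$, hence $P_0^d=\di{1}\p{1}P_0$, so the induced Segal maps of $P$ are exactly the equivalences required in (a).

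For the inductive step, the decisive observation is that pullbacks of this shape pass to simplicial levels: since pullbacks in $\funcat{n-1}{\Cat}$ are pointwise and $\dn,\qn$ act levelwise as $\di{n-1},\q{n-1}$, one has $P_k=\di{n-1}Z_k\tiund{\di{n-1}\q{n-1}X_k}X_k$ for all $k\geq 0$, again a pullback of the theorem's shape one dimension down, now with $Z_k\in\catwg{n-2}$. By the inductive hypothesis (parts (e) and (b)), $P_k\in\lta{n-1}$ and $\q{n-1}P_k,\p{n-1}P_k\in\catwg{n-2}$; the first is condition (i) of Definition~\ref{def-ind-sub-ltawg} for $P$. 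When $k=0$, since $X_0\in\cathd{n-1}$ Lemma~\ref{lem-spec-pulbk-eqr} gives $P_0\in\cathd{n-1}$ and $\p{n-1}P_0=Z_0$, so $\di{n-1}\p{n-1}P_0=\di{n-1}Z_0\in\cathd{n-1}$. Finally, applying $\q{n-1}$ to the formula for $P_k$ and using $\q{n-1}\di{n-1}=\Id$ together with the fact (as in the proof of Proposition~\ref{pro-spec-plbk-pscatwg}) that $\q{n-1}$ commutes with pullbacks along $\di{n-1}$-maps gives $(\q{n}P)_k=Z_k$, i.e. $\q{n}P=Z\in\catwg{n-1}$.

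\emph{Part (a).} We must show $v_k\colon J_{n-1}P_k\rw J_{n-1}(\pro{P_1}{\di{n-1}\p{n-1}P_0}{k})$ is a levelwise equivalence of categories. We treat $k=2$ (the case $k>2$ is analogous) by invoking Lemma~\ref{lem-jn-alpha} in dimension $n-1$, with $A=P_2$, $B=P_1$, $X=\di{n-1}\p{n-1}P_0\in\cathd{n-1}$, $\za=v_2\colon P_2\rw\tens{P_1}{\di{n-1}\p{n-1}P_0}$, and $\zb$ the discretisation map $\tens{P_1}{\di{n-1}\p{n-1}P_0}\rw\tens{P_1}{P_0^d}$. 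The composite $\zb\za$ is the induced Segal map $\hmu{2}$ of $P$, so it is an $(n-1)$-equivalence because $P\in\tawg{n}$: that is hypothesis (iii). Hypothesis (i) holds because, applying Lemma~\ref{lem-spec-pulbk-eqr} to $(P_2)_0,(P_1)_0,(P_0)_0$ and using the Segal isomorphism $Z_2\cong\tens{Z_1}{Z_0}$ of $Z\in\catwg{n-1}$, the map $\p{n-2}(v_2)_0$, and similarly $\p{n-r-2}(v_2)_{k_1\cdots k_r\,0}$, is an isomorphism; hypothesis (ii) holds by the identity $(\tens{P_1}{\di{n-1}\p{n-1}P_0})^d_0\cong\tens{(P_1)^d_0}{(\di{n-1}\p{n-1}P_0)^d_0}$ of \cite[Lemma~3.10]{Pa1}. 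Lemma~\ref{lem-jn-alpha} then gives that $J_{n-1}v_k$ is a levelwise equivalence.

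\emph{Parts (b), (c), (e).} It remains to prove $\p{n}P\in\catwg{n-1}$, for which we apply Lemma~\ref{lem-x-in-tawg-x-in-catwg} to $\p{n}P\in\tawg{n-1}$: its levels $(\p{n}P)_k=\p{n-1}P_k$ lie in $\catwg{n-2}$ by the second paragraph; the Segal isomorphism $(\p{n}P)_k\cong\pro{(\p{n}P)_1}{(\p{n}P)_0}{k}$ is obtained by applying $p$ in the top dimension to the levelwise equivalence of part (a), using $\p{n-1}\di{n-1}=\Id$ and that the multinerve $\Nb{n-2}$ is fully faithful; and the truncation-compatibility condition of that lemma is a routine fibre-product computation in the style of Lemma~\ref{lem-p2-n-1}. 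Hence $\p{n}P\in\catwg{n-1}$, which together with (a) is Definition~\ref{def-ltawg-equiv}, i.e. (c); and (a), $\p{n}P\in\catwg{n-1}$ and $P_k\in\lta{n-1}$ are precisely conditions (i)--(iii) of Definition~\ref{def-ind-sub-ltawg}, i.e. (e). The main obstacle is (a): setting up the correct instance of Lemma~\ref{lem-jn-alpha} one dimension down and checking its hypotheses forces one to track $\p{j}$, $\di{j}$ and $(-)^d$ through the iterated fibre products $P_k=\di{n-1}Z_k\tiund{\di{n-1}\q{n-1}X_k}X_k$, appealing to Lemma~\ref{lem-spec-pulbk-eqr} and the Segal condition on $Z$ at every level; keeping the several nested inductions (on $n$, and the inductive Proposition~\ref{pro-spec-plbk-pscatwg} and Lemma~\ref{lem-jn-alpha}) aligned is the other point requiring care.
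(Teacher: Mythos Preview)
Your proposal is correct and follows essentially the same route as the paper: induction on $n$, with Proposition~\ref{pro-spec-plbk-pscatwg} supplying $P\in\tawg{n}$ and part (d), Lemma~\ref{lem-jn-alpha} delivering (a), and Lemma~\ref{lem-x-in-tawg-x-in-catwg} applied to $\p{n}P$ giving (b). Your treatment of (e) is marginally more direct than the paper's---you verify Definition~\ref{def-ind-sub-ltawg} immediately by applying the inductive hypothesis (e) to each $P_k$, whereas the paper passes through the characterisation of Lemma~\ref{lem-lev-wg-pscat} and applies part (c) at every multi-index---but both are valid.

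One step is understated. You dismiss condition (c) of Lemma~\ref{lem-x-in-tawg-x-in-catwg} for $\p{n}P$ as ``a routine fibre-product computation in the style of Lemma~\ref{lem-p2-n-1}'', but it is not: one needs $\p{j,n-1}P_s\cong\pro{\p{j,n-1}P_1}{\p{j,n-1}P_0}{s}$ for each $1\le j\le n-2$, and since $\p{j,n-1}P_0$ is not discrete Lemma~\ref{lem-p2-n-1} does not apply. The paper handles this by first showing $\tens{P_1}{\di{j,n-1}\p{j,n-1}P_0}\in\tawg{n-1}$ (via Remark~\ref{rem-lev-wg-ncat}, rewriting it as a pullback of Proposition~\ref{pro-spec-plbk-pscatwg} shape) and then invoking Lemma~\ref{lem-jn-alpha} a \emph{second} time, now in dimension $j$, on the factorisation
\[
\p{j+1,n-1}P_2\xrw{\za}\tens{\p{j+1,n-1}P_1}{\di{j}\p{j,n-1}P_0}\xrw{\zb}\tens{\p{j+1,n-1}P_1}{(\p{j+1,n-1}P_0)^d};
\]
applying $\p{j}$ to the resulting levelwise equivalence yields the required isomorphism. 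A related minor point: your citation of \cite[Lemma~3.10]{Pa1} for hypothesis (ii) of Lemma~\ref{lem-jn-alpha} is imprecise, since the base $\di{n-1}\p{n-1}P_0$ is not discrete; the paper instead computes both sides explicitly in terms of $Z$ using Remark~\ref{rem-eq-def-wg-ncat}.
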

\begin{proof}
By induction in $n$. When $n=2$, we know by  Proposition \ref{pro-spec-plbk-pscatwg} that $P\in\tawg{2}=\lnta{2}{2}$, proving a) and c). Part b) is trivial since $\p{2}P$ and $\q{2}P$ are in $\Cat$. Part a) follows from Lemma \ref{pro-spec-plbk-pscatwg} c). Part e) holds since $tawg{2}=\lta{2}$

\medskip
Suppose, inductively that the theorem holds for $n-1$.

\bk

a) We first show that for each $k\geq 2$
\begin{equation*}
    \pro{P_1}{\di{n-1}\p{n-1}P_0}{k}\in \tawg{n-1}\;.
\end{equation*}
We illustrate this for $k=2$, the case $k>2$ being similar. Since $p$ commutes with pullbacks over discrete objects we have
\begin{equation*}
    \p{n}P=Z\tiund{\q{n}X}\p{n}X
\end{equation*}
and, (since $\q{n-1}X_0=\p{n-1}X_0$ as $X_0\in\cathd{n-1}$)
\begin{equation*}
    \p{n-1}P_0=Z_0\tiund{\q{n-1}X_0}\p{n-1}X_0=Z_0\;.
\end{equation*}
Also, $P_1=\di{n-1}Z_1\tiund{\di{n-1}\q{n-1}X_1}X_1$. Therefore
\begin{equation*}
\begin{split}
    & \tens{P_1}{\di{n-1}\p{n-1}P_0}= \\
    =\ & \tens{(\di{n-1}Z_1\tiund{\di{n-1}\q{n-1}X_1}X_1)}{\di{n-1}Z_0}=\\
    =\ & \di{n-1}(\tens{Z_1}{Z_0})\tiund{\di{n-1}\q{n-1}({X_1}\times{X_1})} ({X_1}\times{X_1})\;.
\end{split}
\end{equation*}
 By Proposition \ref{pro-spec-plbk-pscatwg}, this is an object of $\tawg{n-1}$.

 The induced Segal map $\hmu{2}$ for $P$ can therefore be written as composite of maps in $\tawg{n-1}$
\begin{equation}\label{eq-the-repl-obj-1}
    P_2 \xrw{\za} \tens{P_1}{\di{n-1}\p{n-1}P_0} \xrw{\zb} \tens{P_1}{P_0^d}\;.
\end{equation}
We show that the maps \eqref{eq-the-repl-obj-1}  satisfies the hypotheses of Lemma \ref{lem-jn-alpha}.

Since $P\in\tawg{n}$, $\hmu{2}=\zb\za$ is a $\equ{n-1}$, so hypothesis iii) of Lemma \ref{lem-jn-alpha} holds for the maps \eqref{eq-the-repl-obj-1} .

To check hypothesis i), note that
\begin{equation*}
\begin{split}
    & \p{n-2}P_{20}=\p{n-2}(\di{n-2}Z_{20}\tiund{\di{n-2}\q{n-2}X_{10}} \di{n-2}\p{n-2}X_{10})=Z_{20} \\
    & \p{n-2}(\tens{P_1}{\di{n-1}\p{n-1}P_0})_0=\\
    =\ &\tens{\p{n-2}P_{10}}{\p{n-2}P_{00}}=\tens{Z_{10}}{Z_{00}}\cong Z_{20}
\end{split}
\end{equation*}
where the last isomorphism holds since $Z\in\catwg{n-1}$.
Hence $\p{n-2}\za_0$ is an isomorphism. Similarly
\begin{equation*}
    P_{s_1...s_r}=Z_{s_1...s_r}\tiund{\q{n-r-1}X_{s_1...s_r}}X_{s_1...s_r}
\end{equation*}
\begin{equation*}
    \p{n-r-1}P_{s_1...s_r}=Z_{s_1...s_r}\tiund{\q{n-r-1}X_{s_1...s_r}}X_{s_1...s_r}\;.
\end{equation*}
Thus
\begin{equation*}
\begin{split}
    & \p{n-r-2}P_{2 k_1...k_r 0}=Z_{2 k_1...k_r 0} \\
    & \p{n-r-2}(\tens{P_1}{\di{n-1}\p{n-1}P_0})_{k_1...k_r 0}=\\
    =\ & \p{n-r-2}(\tens{P_{1 k_1...k_r 0}}{\di{n-r-1}\p{n-r-1}P_{0 k_1...k_r 0}})=\\
    =\ & \tens{\p{n-r-2}P_{1 k_1...k_r 0}}{\p{n-r-1}P_{0 k_1...k_r 0}}=\\
    =\ & \tens{Z_{1 k_1...k_r 0}}{Z_{0 k_1...k_r 0}}\cong Z_{2 k_1...k_r 0}\;.
\end{split}
\end{equation*}
where the last isomorphism holds since $Z\in\catwg{n-1}$. This shows that $\p{n-r-2}\za_{k_1...k_r 0}$ is an isomorphism, proving hypothesis i) of Lemma \ref{lem-jn-alpha} for the maps \eqref{eq-the-repl-obj-1}.

To check hypothesis ii) of Lemma \ref{lem-jn-alpha} note that $P^d_{20}=Z^d_{20}$ while
\begin{equation*}
\begin{split}
    & (\tens{P_1}{\di{n-1}\p{n-1}P_0})^d_0=(\tens{P_{10}}{\di{n-2}\p{n-2}P_{00}})^d_0= \\
    =\ & (\tens{Z_{10}}{\di{n-2}\p{n-2}Z_{00}})^d=\tens{Z_{10}^d}{Z_{00}^d}\cong Z_{20}^d
\end{split}
\end{equation*}
where the last isomorphism holds because $Z\in\catwg{n}$ (apply Remark \ref{rem-eq-def-wg-ncat} to $Z_0\up{2}$ which is an object of $\catwg{n-2}$ by Proposition \ref{pro-property-WG-nfol}). Similarly
\begin{equation*}
\begin{split}
    & (P_2)^d_{k_1...k_r 0}=Z^d_{2 k_1...k_r 0} \\
    & (\tens{P_1}{\di{n-1}\p{n-1}P_0})^d_{k_1...k_r 0}=\\
    =\ & (\tens{Z_{1 k_1...k_r 0}}{\di{n-1}\p{n-1}Z_{0 k_1...k_r 0}})^d=\\
    =\ & \tens{Z^d_{1 k_1...k_r 0}}{Z^d_{0 k_1...k_r 0}}=Z^d_{2 k_1...k_r 0}\;.
\end{split}
\end{equation*}
where the last equality holds because $Z\in\catwg{n-1}$ and therefore $Z_{k_0...k_r}\in\catwg{n-r-1}$ by applying Remark \ref{rem-eq-def-wg-ncat} to $(Z_{k_0...k_r})_0\up{2}$ which is an object of $\catwg{n-r-1}$ by Proposition \ref{pro-property-WG-nfol}. This proves that hypothesis ii) of Lemma \ref{lem-jn-alpha} holds for \eqref{eq-the-repl-obj-1}.

So all hypotheses of Lemma \ref{lem-jn-alpha} holds for the maps \eqref{eq-the-repl-obj-1}  and we conclude that $J_{n-1}\za$ is a levelwise equivalence of categories.

The proof that, for each $s > 2$ the map
\begin{equation*}
    P_s\rw \pro{P_1}{\di{n-1}\p{n-1}P_0}{s}
\end{equation*}
is a levelwise equivalence of categories is similar.

\bk
b) We have $\q{n}P=Z\in\catwg{n}$, $\p{n}P\in\tawg{n-1}$ since $P\in \tawg{n}$ by Proposition \ref{pro-spec-plbk-pscatwg}. We now show that $\p{n}P$ satisfies the hypotheses of Lemma \ref{lem-x-in-tawg-x-in-catwg}, which then shows that $\p{n}P\in\catwg{n-1}$.

We have the pullback in $\funcat{n-2}{\Cat}$ for each $k\geq 0$,
\begin{equation*}
\xymatrix{
P_k \ar[rr] \ar[d] && X_k \ar[d]\\
Z_k \ar[rr] && \di{n-1}\p{n-1}X_k
}
\end{equation*}
which\, satisfies\, the\, induction\, hypothesis. \,Therefore, \,by \,induction,\, $\p{n-1}P_k\in\catwg{n-2}$ which is hypothesis a) of Lemma \ref{lem-x-in-tawg-x-in-catwg}. Since, by part a), the map
\begin{equation*}
    t_2:P_2\rw \tens{P_1}{\di{n-1}\p{n-1}P_0}
\end{equation*}
is a levelwise equivalence of categories, it induces an isomorphism
\begin{equation*}
    \p{n-1}P_2=\p{n-1}(\tens{P_1}{\di{n-1}\p{n-1}P_0})=\tens{\p{n-1}P_1}{\p{n-1}P_0}
\end{equation*}
and similarly all the other Segal maps of $\p{n}P$ are isomorphisms. This proves hypothesis b) of Lemma \ref{lem-x-in-tawg-x-in-catwg} for $\p{n}X$.

To prove hypothesis c) of Lemma \ref{lem-x-in-tawg-x-in-catwg}, we first note that
\begin{equation}\label{eq0-the-repl-obj-1}
    \tens{P_1}{\di{j,n-1}\p{j,n-1}P_0}\in \tawg{n-1}
\end{equation}
where we denote
\begin{equation*}
    \p{j,n-1}=\p{j}...\p{n-1}, \qquad \di{j,n-1}=\di{n-1}...\di{j}\;.
\end{equation*}
In fact, since $\p{n-1}P_0=Z_0$ we have
\begin{equation}\label{eq1-the-repl-obj-1}
\begin{split}
    & \tens{P_1}{\di{j,n-1}\p{j,n-1}P_0}= \\
    & \resizebox{1.0\hsize}{!}{$ \;= \tens{(\di{n-1}Z_1 \tiund{\di{n-1}\q{n-1}X_1} X_1)}{\di{j,n-1}\p{j,n-1}Z_0}=$}\\
   & = \{\tens{\di{n-1}Z_1}{\di{j,n-1}\p{j,n-1}Z_0}\}\tiund{\di{n-1}\q{n-1}(X_1\times X_1)} (X_1\times X_1)=\\
   & = \di{n-1}(\tens{Z_1}{\di{j,n-2}\p{j,n-1}Z_0})\tiund{\di{n-1}\q{n-1}(X_1\times X_1)} (X_1\times X_1)\;.
\end{split}
\end{equation}
By Remark \ref{rem-lev-wg-ncat}, $\tens{Z_1}{\di{j,n-2}\p{j,n-2}Z_0}\in\catwg{n-1}$; by Proposition \ref{pro-spec-plbk-pscatwg} and  \eqref{eq1-the-repl-obj-1} we conclude that \eqref{eq0-the-repl-obj-1} holds. It follows that
\begin{equation*}
\begin{split}
    & \p{j+1,n-1}(\tens{P_1}{\di{j,n-1}\p{j,n-1}P_0})= \\
    =\ & \tens{\p{j+1,n-1}P_1}{\di{j}\p{j}\p{j+1,n-1}P_0}\in\tawg{j}\;.
\end{split}
\end{equation*}
Since $P\in\tawg{n}$, $\p{j+1,n}P\in\tawg{j}$ so that the induced Segal map
\begin{equation*}
    \p{j+1,n-1}P_2\rw \tens{\p{j+1,n-1}P_1}{(\p{j+1,n-1}P_0)^d}
\end{equation*}
are $\equ{j-1}$s. From above, this map factorizes as composite of maps in $\tawg{j}$
\begin{equation*}
\begin{split}
    & \p{j+1,n-1}P_2 \xrw{\;\za\;} \tens{\p{j+1,n-1}P_1}{\di{j}\p{j}\p{j+1,n-1}P_0} \xrw{\;\zb\;}\\
   \xrw{\;\zb\;}\ &\tens{\p{j+1,n-1}P_1}{(\p{j+1,n-1}P_0)^d}\;.
\end{split}
\end{equation*}
It is not difficult to check that this satisfies the hypotheses of Lemma  \ref{lem-jn-alpha}; thus we conclude that $J_j\za$ is a levelwise equivalence of categories. Therefore $\p{j}\za$ is an isomorphism, that is
\begin{equation*}
\begin{split}
    & \p{j,n-1}P_2 \cong \p{j}(\tens{\p{j+1,n-1}P_1}{\di{j}\p{j}\p{j+1,n-1}P_0})= \\
    =\ & \tens{\p{j,n-1}P_1}{\p{j,n-1}P_0}\;.
\end{split}
\end{equation*}
Similarly one shows that all the other Segal maps for $\p{j+1,n}P$ are isomorphisms, which proves condition c) in Lemma \ref{lem-x-in-tawg-x-in-catwg} for $\p{n}P$. Thus by Lemma \ref{lem-x-in-tawg-x-in-catwg} we conclude that $\p{n}P\in\catwg{n}$, proving b).

\bk
c) By definition of $\lnta{n}{n}$, this follows from a) and b).

\bk

d) Consider the commuting diagram in $\tawg{n}$
\begin{equation*}
\xymatrix@R=35pt @C=40pt{
\di{n}Z \ar^{\di{n}r}[r] \ar_{\di{n}r}[d] & \di{n}\q{n}X \ar@{=}^{}[d] & X \ar^{}[l] \ar@{=}^{}[d] \\
\di{n}\q{n}X \ar@{=}^{}[r] & \di{n}\q{n}X & X  \ar^{}[l]
}
\end{equation*}
By hypothesis, $\di{n}r$ is a $\nequ$. Thus by Proposition \ref{pro-spec-plbk-pscatwg} the induced map of pullbacks
\begin{equation*}
    P=\di{n}Z \tiund{\di{n}\q{n}X} X \xrw{w} \di{n}\q{n}X \tiund{\di{n}\q{n}X} X =X
\end{equation*}
is a $\nequ$, as required.

\bk

e) By c) and by Lemma \ref{lem-lev-wg-pscat}, to show that $P\in\lta{n}$ it is enough to show that, for each $\seqc{k}{1}{n-s}\in\Dop$, $P_{\seqc{k}{1}{n-s}}\in\lnta{s}{s}$, $1< s \leq n-1$. We have a pullback in $\funcat{n-1} {\Cat}$
\begin{equation}\label{eq2-the-repl-obj-1}
\xymatrix@R=35pt @C=45pt{
P_{\seq{k}{1}{n-s}} \ar^{w_{\seq{k}{1}{n-s}}}[r] \ar^{}[d] & X_{\seq{k}{1}{n-s}} \ar^{}[d] \\
 \di{s} Z_{\seq{k}{1}{n-s}} \ar_{\di{s}r_{\seq{k}{1}{n-s}}}[r] & \di{s}\q{s} X_{\seq{k}{1}{n-s}}
 }
\end{equation}
where $X_{\seq{k}{1}{n-s}}\in \tawg{s}$ (since $X\in\tawg{n}$) and $Z_{\seq{k}{1}{n-s}}\in \catwg{s-1}$ (since $Z\in\catwg{n-1}$).

Thus \eqref{eq2-the-repl-obj-1} satisfies the hypotheses of the theorem and we conclude from c) that $P_{\seq{k}{1}{n-s}}\in\lnta{s}{s}$, as required.

\end{proof}
\medskip

\section{Ridigifying weakly globular Tamsamani ${n}$-categories}\label{sec-wg-tam-to-psefun}
In this section we show the main result of the paper, Theorem \ref{the-funct-Qn}, establishing the existence of a rigidification functor
\begin{equation*}
  Q_n:\tawg{n}\rw \catwg{n}
\end{equation*}
replacing $X\in\tawg{n}$ with an $n$-equivalent object $Q_n X$.

 The construction of the functor $Q_n$ uses three main ingredients: the first is the approximation up to $n$-equivalence of any object of $\tawg{n}$ with an object of $\lta{n}$ established in the previous section. The second ingredient is a functor $Tr_n$  from the category $\lta{n}$ to the category of Segalic pseudo-functors, which we establish in this section in Theorem \ref{the-XXXX}. The third ingredient is the functor from Segalic pseudo-functors to weakly globular $n$-fold categories from \cite{Pa2} recalled in Theorem \ref{the-strict-funct}.

 The construction of the functor $Tr_n$ is based on a general categorical technique  which we recalled in Lemma \ref{lem-PP}. We can apply this technique to objects of the category $\lta{n}$ thanks to a property of the latter, which we establish in Lemma \ref{lem-maps-nu-eqcat}.

 \bigskip

 We first fix a notation for the induced Segal maps of categories for an object $X$ of $\tawg{n}$ when all but one simplicial directions in $J_nX$ are fixed.
\begin{notation}\label{not-ind-seg-map}
    Let $X\in\tawg{n}$, $\uk=(k_1,\ldots,k_{n-1})\in\dop{n-1}$, $1\leq i\leq n-1$. Then there is $X^i_{\uk}\in\funcat{}{\Cat}$ with
    \begin{equation*}
        (X^i_{\uk})_r=X_{\uk(r,i)}=X_{k_1...k_{i-1}r k_{i+1}...k_{n-1}}
    \end{equation*}
    so that $(X^i_{\uk})_{k_i}=X_{\uk}$. Since $X_{k_1,\ldots,k_{i-1}}\in\tawg{n-i+1}$, $X_{k_1,\ldots,k_{i-1}0}\in\cathd{n-i}$ and thus by \cite[Lemma 3.1]{Pa1}
    \begin{equation*}
        X_{k_1...k_{i-1} 0 k_{i+1}...k_{n-1}}= X_{\uk(0,i)}\in\cathd\;.
    \end{equation*}
    We therefore obtain induced Segal maps in $\Cat$ for all $k_i\geq 2$.
    \begin{equation}\label{eq1-not-ind-seg-map}
        \nu(\uk,i):X_{\uk}\rw \pro{X_{\uk(1,i)}}{X^d_{\uk(0,i)}}{k_i}\;.
    \end{equation}
\end{notation}
\begin{lemma}\label{lem-maps-nu-eqcat}
    Let $X\in\lta{n}$; then for each $\uk\in\dop{n-1}$, $1\leq i\leq n-1$ and $k_i\geq 2$ the maps $ \nu(\uk,i)$ as in \eqref{eq1-not-ind-seg-map} are equivalences of categories.
\end{lemma}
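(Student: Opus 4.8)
The plan is to prove this by induction on $n$, closely paralleling the inductive structure of the definition of $\lta{n}$ (Definition \ref{def-ind-sub-ltawg}) and of $\lnta{n}{n}$ (Definition \ref{def-ltawg-equiv}), and exploiting Lemma \ref{lem-lev-wg-pscat}, which reduces membership in $\lta{n}$ to conditions on $X$ itself and on all its "lower corner" slices $X_{k_1,\ldots,k_{n-r}}$. The base case is $n=2$: here $\dop{1}=\Dop$, the only relevant direction is $i=1$, and for $X\in\lta{2}=\tawg{2}$ the maps $\nu(\uk,1)$ are exactly the induced Segal maps $\hmu{k_1}: X_{k_1}\rw \pro{X_1}{X_0^d}{k_1}$, which are equivalences of categories by the very definition of $\tawg{2}$.

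For the inductive step, assume the lemma for $n-1$ and let $X\in\lta{n}$. I would split the verification into two cases according to the direction $i$. \textbf{Case $i=1$:} here $X^1_{\uk}$ is the simplicial object $r\mapsto X_{r,k_2,\ldots,k_{n-1}}$, i.e. the $(k_2,\ldots,k_{n-1})$-component of $X\in\funcat{}{\tawg{n-1}}$ evaluated after applying $J_{n-1}$ in the remaining directions. By Definition \ref{def-ind-sub-ltawg}(ii), the map $v_{k_1}: J_{n-1}X_{k_1}\rw J_{n-1}(\pro{X_1}{\di{n-1}\p{n-1}X_0}{k_1})$ is a levelwise equivalence of categories; evaluating at $(k_2,\ldots,k_{n-1})$ gives precisely that $\nu(\uk,1): X_{\uk}\rw \pro{X_{\uk(1,1)}}{X^d_{\uk(0,1)}}{k_1}$ is an equivalence of categories, once one identifies $(\di{n-1}\p{n-1}X_0)$-components with the discretization $X^d_{\uk(0,1)}$ — this is the point where one uses that $X_0\in\cathd{n-1}$ so that $\p{n-1}X_0$ applied levelwise realizes $X_0^d$ in the appropriate sense, exactly as set up in Notation \ref{not-ind-seg-map} via \cite[Lemma 3.1]{Pa1}. \textbf{Case $2\leq i\leq n-1$:} fix $k_1$; then $(X^i_{\uk})_r = (X_{k_1})_{k_2\ldots k_{i-1} r k_{i+1}\ldots k_{n-1}}$, so $\nu(\uk,i)$ is an induced Segal map of categories for the object $X_{k_1}\in\tawg{n-1}$ in direction $i-1$. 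By Definition \ref{def-ind-sub-ltawg}(i), $X_{k_1}\in\lta{n-1}$, so the inductive hypothesis applied to $X_{k_1}$ yields that this map is an equivalence of categories.

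The main technical obstacle I anticipate is the bookkeeping in Case $i=1$: one must carefully match the limit $\pro{X_{\uk(1,1)}}{X^d_{\uk(0,1)}}{k_1}$ appearing in \eqref{eq1-not-ind-seg-map} with the $(k_2,\ldots,k_{n-1})$-component of $J_{n-1}(\pro{X_1}{\di{n-1}\p{n-1}X_0}{k_1})$ from Definition \ref{def-ind-sub-ltawg}(ii). The subtlety is that the pullback in the latter is formed in $\funcat{n-2}{\Cat}$ over $\di{n-1}\p{n-1}X_0$, whereas in \eqref{eq1-not-ind-seg-map} it is formed over $X^d_{\uk(0,1)}$ in $\Cat$; since $X_0\in\cathd{n-1}$ one has $X_{0,k_2,\ldots,k_{n-1}}\in\cathd{}$ and $p X_{0,k_2,\ldots,k_{n-1}} = X^d_{0,k_2,\ldots,k_{n-1}} = (\di{n-1}\p{n-1}X_0)_{k_2,\ldots,k_{n-1}}$, and because the nerve functor and pointwise evaluation commute with the relevant pullbacks, the two limits agree on components. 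Once this identification is in place, the argument is a routine unwinding of definitions, and the levelwise equivalence statements in the definitions of $\lta{n}$ and of $\tawg{2}$ deliver exactly what is required.
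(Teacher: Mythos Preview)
Your proposal is correct and follows essentially the same argument as the paper: induction on $n$ with the base case $n=2$ given by the definition of $\tawg{2}$, and the inductive step split into the case $i\geq 2$ (handled by applying the induction hypothesis to $X_{k_1}\in\lta{n-1}$ via Definition \ref{def-ind-sub-ltawg}(i)) and the case $i=1$ (handled by evaluating the levelwise equivalence of Definition \ref{def-ind-sub-ltawg}(ii) at $(k_2,\ldots,k_{n-1})$ and identifying $(\di{n-1}\p{n-1}X_0)_{k_2,\ldots,k_{n-1}}$ with $X^d_{\uk(0,1)}$). The bookkeeping you flag in the $i=1$ case is exactly the identification the paper makes, and your justification via $X_0\in\cathd{n-1}$ is correct.
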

\begin{proof}
By induction on $n$. It holds when $n=2$ since $\lta{2}=\tawg{2}$ and, for $k\geq 2$, $\nu(k,1)$ is the induced Segal map
\begin{equation*}
    X_k\rw \pro{X_1}{X_0^d}{k}
\end{equation*}
which is an equivalence of categories by definition of $\tawg{2}$.

Suppose the lemma holds for $(n-1)$ and let $X$ be as in the hypothesis. Consider first the case $1<i\leq n-1$. Since $X\in\lta{n}$, by definition $X_{k_1}\in\lta{n-1}$. Denoting $\ur=(k_2...k_{n-1})$ we have
\begin{equation}\label{eq2-not-ind-seg-map}
\begin{split}
    & X_{\uk}=(X_{k_1})_{\ur} \\
    & X_{\uk(1,i)}=(X_{k_1})_{\ur(1,i-1)}\qquad X_{\uk(0,i)}=(X_{k_1})_{\ur(0,i-1)}\;.
\end{split}
\end{equation}
The induction hypothesis applied to $Y=X_{k_1}$ implies the equivalence of categories for each $r_{i-1}\geq 2$
\begin{equation*}
    Y_{\ur}\simeq \pro{Y_{\ur(1,i-1)}}{Y^d_{\ur(0,i-1)}}{r_{i-1}}\;.
\end{equation*}
By \eqref{eq2-not-ind-seg-map} this means that $\nu(\uk,i)$ is an equivalence of categories for all $1<i\leq n-1$.

\nid Consider the case $i=1$. By definition of $\lta{n}$ the map in $\funcat{n-2}{\Cat}$
\begin{equation*}
    X_{k_1}\rw\pro{X_1}{\di{n-1}\p{n-1}X_0}{k_1}
\end{equation*}
is a levelwise equivalence of categories for each $k_i\geq 2$. Therefore, for each $\ur=(k_2...k_{n-1})$ there is an equivalence of categories.
\begin{equation*}
    (X_{k_1})_{\ur}\simeq \pro{(X_1)_{\ur}}{d p (X_0)_{\ur}}{k_1}\;.
\end{equation*}
Since $(X_1)_{\ur}=X_{\uk(1,1)}$ and $dp(X_0)_{\ur}=X^d_{\uk(0,1)}$ we obtain the equivalence of categories
\begin{equation*}
    X_{\uk}\simeq \pro{X_{\uk(1,1)}}{X^d_{\uk(0,1)}}{k_i}\;.
\end{equation*}
In conclusion $\nu(\uk,i)$ is an equivalence of categories for all $1\leq i\leq n-1$.
\end{proof}
\begin{theorem}\label{the-XXXX}
    There is a functor
    \begin{equation*}
        Tr_{n}: \lta{n} \rw \segpsc{n-1}{\Cat}
    \end{equation*}
    together with a pseudo-natural transformation $t_n (X):Tr_{n}X\rw X$ for each $X\in\lta{n}$ which is a levelwise equivalence of categories.
\end{theorem}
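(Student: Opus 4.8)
The plan is to construct $Tr_n$ by transport of structure, applying Lemma \ref{lem-PP} with $\clC=\Dmenop$ and $F=J_nX\colon\Dmenop\to\Cat$, which is an honest $2$-functor. The data Lemma \ref{lem-PP} requires is, for each $\uk\in\Dmenop$, a category $G(\uk)$ — the ``rigidified'' value at $\uk$ — together with an adjoint equivalence $\mu_{\uk}\dashv\eta_{\uk}\colon G(\uk)\rightleftarrows X_{\uk}$. It then returns a pseudo-functor $G\colon\Dmenop\to\Cat$ extending the family $\{G(\uk)\}$, together with an adjoint equivalence $\mu\dashv\eta$ in $\psc{n-1}{\Cat}$ with $\mu(\uk)=\mu_{\uk}$, $\eta(\uk)=\eta_{\uk}$. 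I would set $Tr_nX:=G$ and $t_n(X):=\mu\colon Tr_nX\to X$; since every $\mu_{\uk}$ is an equivalence of categories, $t_n(X)$ is automatically a levelwise equivalence. The property of $\lta{n}$ that makes this work is Lemma \ref{lem-maps-nu-eqcat}, which produces the equivalences at the ``off-spine'' multi-indices.

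I would define the \emph{Segalic skeleton} $G$ as follows. For $\uk$ having some coordinate equal to $0$ put $G(\uk):=X_{\uk}^d$, the discretisation; by Notation \ref{not-ind-seg-map} such an $X_{\uk}$ lies in $\cathd{}$, so the canonical map $\zg\colon X_{\uk}\to X_{\uk}^d$ is an equivalence of categories, and I fix an adjoint equivalence with $\eta_{\uk}=\zg$. For $\uk$ with all coordinates equal to $1$ put $G(\uk):=X_{\uk}$ and $\mu_{\uk}=\eta_{\uk}=\Id$. For the remaining $\uk$ (all coordinates positive, not all equal to $1$) define $G(\uk)$ by induction on $\sum_i k_i$ via the Segal formula: picking any $i$ with $k_i\geq 2$, set $G(\uk):=\pro{G(\uk(1,i))}{G(\uk(0,i))}{k_i}$. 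This is independent of the choice of $i$, since $G(\uk)$ is in each case the limit of one and the same finite diagram, built from the values $G(\ur)$ with $\ur$ having all coordinates $\leq 1$ together with the face maps among them, all pullbacks being taken over discrete objects; this rigidity of iterated pullbacks over discrete objects is the phenomenon underlying Lemmas \ref{lem-char-obj} and \ref{lem-char-obj-II}. The adjoint equivalence at such a $\uk$ comes from Lemma \ref{lem-maps-nu-eqcat}: the induced Segal map $\nu(\uk,i)\colon X_{\uk}\to\pro{X_{\uk(1,i)}}{X_{\uk(0,i)}^d}{k_i}$ is an equivalence; pulling back the inductively available equivalence $X_{\uk(1,i)}\simeq G(\uk(1,i))$ over the discrete object $X_{\uk(0,i)}^d=G(\uk(0,i))$ — pullback over a discrete object preserves equivalences of categories, cf.\ Lemma \ref{lem-crit-lev-nequiv} — and composing with $\nu(\uk,i)$ gives an equivalence $X_{\uk}\simeq G(\uk)$, which I upgrade to an adjoint equivalence $\mu_{\uk}\dashv\eta_{\uk}$.

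It remains to check that $Tr_nX=G$ lies in $\segpsc{n-1}{\Cat}$ and that $Tr_n$ is functorial. Discreteness of $Tr_nX$ at $\uk(0,i)$ is immediate, since $G(\uk(0,i))=X_{\uk(0,i)}^d$ is discrete. For the Segal condition one compares the transported face maps of $G$, namely $\eta_{\uk(1,i)}\circ\nu_j\circ\mu_{\uk}$, with the projections $\pr_j$ of the defining pullback, using $\pr_j\circ\nu(\uk,i)=\nu_j$, the fact that the $\eta$'s at zero-coordinate multi-indices are the \emph{canonical} discretisation maps, and the triangle identities of the chosen adjoint equivalences, to conclude that the Segal map of $G$ in each direction is an isomorphism — if necessary after rectifying $G$ within its isomorphism class in $\psc{n-1}{\Cat}$ so that these maps become the canonical projections, which changes neither the objectwise values nor the property that $t_n(X)$ is a levelwise equivalence. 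For the truncation functor, note that applying $p\colon\Cat\to\Set$ levelwise to $Tr_nX$ yields the multinerve of $\p{n}X$: indeed $p$ commutes with pullbacks over discrete objects and with discretisation, so $p\,G(\uk)=(\p{n}X)_{\uk}$ for every $\uk$, and $\p{n}X\in\catwg{n-1}$ by condition (iii) of Definition \ref{def-ind-sub-ltawg}. Functoriality is obtained from Lemma \ref{lem-PP}(b): a morphism $f\colon X\to X'$ in $\lta{n}$ gives the $2$-natural transformation $\alpha=J_nf$, one defines $\beta_{\uk}\colon G(\uk)\to G'(\uk)$ to be $p\,f_{\uk}$ on the discrete values, the evident component on the all-ones values, and the induced map of pullbacks elsewhere, and the invertible $2$-cells $\gamma_{\uk}\colon\beta_{\uk}\eta_{\uk}\Rightarrow\eta'_{\uk}\alpha_{\uk}$ are forced by the compatibility of $f$ with the canonical discretisations and the maps $\nu(\uk,i)$; Lemma \ref{lem-PP}(b) then returns $Tr_nf:=\beta$, which by construction is a morphism in $\segpsc{n-1}{\Cat}$, and preservation of identities and composites is clear from the formulas for the $\beta_{\uk}$. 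The base case $n=2$, where $\lta{2}=\tawg{2}$ and multi-indices are single, is the special case with $G(0)=X_0^d$, $G(1)=X_1$ and $G(k)=\pro{X_1}{X_0^d}{k}$.

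The main obstacle is the verification that $Tr_nX$ is \emph{genuinely} Segalic — in particular that its Segal maps are honest isomorphisms, not merely equivalences of categories. A careless application of transport of structure only produces a pseudo-functor whose Segal maps are \emph{isomorphic} to the canonical projections, which is insufficient; overcoming this forces one to define the Segalic skeleton $G(\uk)$ as the strict iterated pullback over discrete objects and, crucially, to choose the adjoint equivalences compatibly with the canonical discretisation maps and with the induced Segal maps $\nu(\uk,i)$ supplied by Lemma \ref{lem-maps-nu-eqcat}. Checking this compatibility, together with the order-independence of the iterated pullbacks defining $G$, is the technical heart of the argument.
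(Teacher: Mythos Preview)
Your approach is essentially the same as the paper's: both apply transport of structure (Lemma \ref{lem-PP}) with $\clC=\Dmenop$ and $F=J_nX$, define discretised values $X^d_{\uk}$ at multi-indices with a zero coordinate, use Lemma \ref{lem-maps-nu-eqcat} to supply the required equivalences of categories, and then check the three conditions of Definition \ref{def-seg-ps-fun}. The one substantive variation is at multi-indices $\uk$ with all coordinates positive but not all equal to $1$: the paper simply sets $(Tr_nX)_{\uk}=X_{\uk}$ whenever at least one $k_i=1$, and uses a single Segal decomposition $\pro{X_{\uk(1,i)}}{X^d_{\uk(0,i)}}{k_i}$ otherwise, whereas you recursively decompose all such $\uk$ down to an iterated pullback of copies of $X_{1,\ldots,1}$ over discrete objects. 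Your variant is more uniform and makes the order-independence of the choice of $i$ explicit (the paper is silent on this point); the paper's variant keeps the definition one step shallower and matches the statement of Lemma \ref{lem-maps-nu-eqcat} directly. Either choice yields levelwise-equivalent pseudo-functors, and both proofs leave the same point somewhat implicit --- namely that the \emph{transported} face maps, not merely the values, make the Segal maps of $Tr_nX$ honest isomorphisms --- which you correctly flag as the technical crux.
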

\begin{proof}
Given $X\in\lta{n}$ define the diagram $Tr_{n} X$ in $[ob(\dop{n-1}),\Cat]$ as follows: for each $\uk\in\dop{n-1}$ and $1\leq i\leq n-1$
\begin{itemize}
  \item [i)] If $k_j=0$ for some $1\leq j\leq n-1$
  \begin{equation*}
    (Tr_{n} X)_{\uk}=X^d_{\uk}\;.
  \end{equation*}

  \item [ii)] If $k_j\neq 0$ for all $1\leq j\leq n-1$ and $k_i=1$,
  \begin{equation*}
    (Tr_{n} X)_{\uk}=X_{\uk(1,i)}=X_{\uk}\;.
  \end{equation*}

  \item [iii)] If $k_j\neq 0$ for all $1\leq j\leq n-1$ and $k_i>1$,
  \begin{equation*}
    (Tr_{n} X)_{\uk}=\pro{X_{\uk(1,i)}}{X^d_{\uk(0,i)}}{k_i} \;.
  \end{equation*}

  \end{itemize}
  We claim that, for all $\uk\in \dop{n-1}$ there is an equivalence of categories
  \begin{equation*}
    (Tr_{n} X)_{\uk}=X_{\uk}\;.
  \end{equation*}
  In fact, in case i), $X_{\uk}\in\cathd{}$ thus there is an equivalence of categories
  \begin{equation*}
    (Tr_{n} X)_{\uk}= X^d_{\uk}\simeq X_{\uk}\;.
  \end{equation*}
  In case ii) we have
  \begin{equation*}
    (Tr_{n} X)_{\uk}=X_{\uk(1,i)}=X_{\uk}\;.
  \end{equation*}
  In case iii) by Lemma \ref{lem-maps-nu-eqcat}  we have an equivalence of categories
  \begin{equation*}
    (Tr_{n} X)_{\uk}=\pro{X_{\uk(1,i)}}{X^d_{\uk(0,i)}}{k_i}\simeq X_{\uk} \;.
  \end{equation*}
  This proves the claim.

We can therefore apply Lemma \ref{lem-PP} with $\clC=\dop{n-1}$ and conclude that $Tr_{n}X$ lifts to a pseudo-functor
\begin{equation*}
    Tr_{n}X \in \psc{n-1}{\Cat}
\end{equation*}
with $(Tr_{n} X)_{\uk}$ as in i), ii) and iii). We check that $Tr_{n}X$ is a Segalic pseudo-functor.

Conditions a) in Definition \ref{def-seg-ps-fun} is satisfied by construction. As for condition b), let $\uk\in\dop{n-1}$ be such that $k_j=0$ for some $1\leq j\leq n-1$ and let $k_i\geq 2$ for $1\leq i\leq n$. Since by Lemma \ref{lem-maps-nu-eqcat} there is an equivalence of categories
\begin{equation*}
  X_{\uk}\simeq \pro{X_{\uk(1,i)}}{X^d_{\uk(0,i)}}{k_i}
\end{equation*}
and since $X_{\uk}\in \cathd{}$, $X_{\uk(1,i)}\in \cathd{}$, there is an isomorphism
\begin{equation*}
    X^d_{\uk}\cong dp( \pro{X_{\uk(1,i)}}{X^d_{\uk(0,i)}}{k_i})\cong \pro{X^d_{\uk(1,i)}}{X^d_{\uk(0,i)}}{k_i}
\end{equation*}
which is the same as
\begin{equation*}
    (Tr_{n}X )_{\uk}\cong\pro{(Tr_{n}X )_{\uk(1,i)}}{(Tr_{n}X )_{\uk(0,i)}}{k_i}\;.
\end{equation*}
Thus condition b) in Definition \ref{def-seg-ps-fun} holds in this case.

Suppose $k_j\neq 0$ for all $1\leq j\leq n-1$, $k_i\geq 2$. Then
\begin{equation*}
\begin{split}
    & (Tr_{n}X )_{\uk}=\pro{X _{\uk(1,i)}}{X^d_{\uk(0,i)}}{k_i}\cong \\
 \cong \,  & \pro{(Tr_{n}X )_{\uk(1,i)}}{(Tr_{n}X )_{\uk(0,i)}}{k_i} \;.
\end{split}
\end{equation*}
So condition b) in Definition \ref{def-seg-ps-fun} also holds in this case. As for condition c), we note that the equivalence of categories $(Tr_{n} X)_{\uk}\simeq X_{\uk}$ for each $\uk\in\dop{n-1}$ implies the isomorphism
\begin{equation*}
    p (Tr_{n} X)_{\uk} \cong p X_{\uk} =(\p{n}X)_{\uk}\;.
\end{equation*}
Since $X\in\lta{n}$, $\p{n}X\in\catwg{n-1}$ hence $\p{n}Tr_{n} X\in\catwg{n-1}$. By definition this means that $Tr_{n} X\in\segpsc{n-1}{\Cat}$. By Lemma \ref{lem-PP} there is a morphism in $\psc{n-1}{\Cat}$, \, $t_n(X):Tr_{n} X\rw X$ which is a levelwise equivalence of categories.
\end{proof}
We now prove the main result of this paper, which is the existence of a rigidification functor from $\tawg{n}$ to $\catwg{n}$ replacing any object of $\tawg{n}$ with a suitably equivalent object in $\catwg{n}$.
\begin{theorem}\label{the-funct-Qn}
    There is a functor
    \begin{equation*}
        Q_n:\tawg{n} \rw \catwg{n}
    \end{equation*}
    and for each $X\in\tawg{n}$ a morphism in $\tawg{n}$  $s_n(X):Q_n X\rw X$, natural in $X$, such that $(s_n(X))_{k}$ is a $(n-1)$-equivalence for all $k\geq0$. In particular, $s_{n}(X)$ is an $n$-equivalence.
\end{theorem}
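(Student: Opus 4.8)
The plan is to assemble the functor $Q_n$ from the three ingredients already prepared in the excerpt. First I would recall the factorization announced in the introduction:
\begin{equation*}
  Q_n:\tawg{n}\xrw{P_n}\lta{n}\xrw{Tr_n}\segpsc{n-1}{\Cat}\xrw{\St}\catwg{n}\;,
\end{equation*}
where $\St$ restricted to Segalic pseudo-functors is the functor $L_n$ of Theorem \ref{the-strict-funct}, and $Tr_n$ is the functor of Theorem \ref{the-XXXX}. The only piece not yet formalized as a \emph{functor} is $P_n$, the functorial replacement of an object of $\tawg{n}$ by an object of $\lta{n}$. This is exactly what Theorem \ref{the-repl-obj-1} provides at the level of objects, via the pullback $P=\di{n}Z\tiund{\di{n}\qn X}X$ for a chosen $r:Z\rw\di{n}\qn X$ with $Z\in\catwg{n-1}$; what remains is to make the choice of $(Z,r)$ functorial in $X$.

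The key step is therefore to define $P_n$ inductively on $n$. For $n=2$ one takes $P_2=\Id$ since $\lta{2}=\tawg{2}$. For the inductive step, given $X\in\tawg{n}$ one has $\qn X\in\tawg{n-1}$ and $\p{n}X\in\tawg{n-1}$ with a natural map $\za_X\up{n}:\p{n}X\rw\qn X$ (from the proof of Proposition \ref{pro-post-trunc-fun}); applying $P_{n-1}$ to $\p{n}X$ and precomposing gives a map $P_{n-1}\p{n}X\rw\p{n}X\rw\qn X$ in $\tawg{n-1}$, with $P_{n-1}\p{n}X\in\lta{n-1}$. One would like $Z:=P_{n-1}\p{n}X$ to lie in $\catwg{n-1}$; this is where one must either strengthen $P_{n-1}$ to land in $\catwg{n-1}$ or compose further with $Q_{n-1}$ — i.e. set $Z=Q_{n-1}\p{n}X$ and $r=(Q_{n-1}\p{n}X\rw\p{n}X\rw\qn X)$, using that $s_{n-1}(\p{n}X)$ is an $(n-1)$-equivalence. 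Then define $P_n X$ to be the pullback $\di{n}Z\tiund{\di{n}\qn X}X$ with projection $w_X:P_n X\rw X$. Functoriality is immediate since pullbacks, $\qn$, $\p{n}$, $Q_{n-1}$, and the natural maps $\za\up{n}$, $s_{n-1}$ are all functorial; Theorem \ref{the-repl-obj-1}(e) gives $P_nX\in\lta{n}$, and Theorem \ref{the-repl-obj-1}(d) (since $r$ is built from the $(n-1)$-equivalence $s_{n-1}$, which composes with the identity on the discretization so that the relevant map of pullbacks applies) gives that $w_X$ is an $n$-equivalence.

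Having $P_n$, I would then set
\begin{equation*}
  Q_n X = L_n\bigl(Tr_n(P_n X)\bigr)\in\catwg{n}\;,
\end{equation*}
using Theorem \ref{the-XXXX} ($Tr_n(P_n X)\in\segpsc{n-1}{\Cat}$) and Theorem \ref{the-strict-funct} ($L_n$ sends Segalic pseudo-functors into $J_n\catwg{n}\cong\catwg{n}$). The natural map $s_n(X):Q_n X\rw X$ is the composite
\begin{equation*}
  Q_n X = L_n Tr_n(P_n X)\xrw{} Tr_n(P_n X)\xrw{t_n(P_nX)} P_n X\xrw{w_X} X\;,
\end{equation*}
where the first arrow is the counit-type map of Theorem \ref{the-strict-funct} (levelwise an equivalence of categories), $t_n$ is from Theorem \ref{the-XXXX} (levelwise an equivalence of categories), and $w_X$ is the $n$-equivalence just constructed. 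Since $J_n$ of the first two maps is a levelwise equivalence of categories, by Remark \ref{rem-local-equiv} each is an $n$-equivalence, and a fortiori $(s_n(X))_k$ is an $(n-1)$-equivalence for each $k\geq 0$: indeed $(L_n Tr_n(P_nX))_k\rw (Tr_n(P_nX))_k\rw (P_nX)_k$ are equivalences of categories levelwise by Theorems \ref{the-strict-funct} and \ref{the-XXXX}, while $(w_X)_k$ is an $(n-1)$-equivalence by the levelwise description of the pullback $P_nX$ together with Proposition \ref{pro-spec-plbk-pscatwg}. Finally, by Proposition \ref{pro-n-equiv}(c)--(d) the composite $s_n(X)$ is an $n$-equivalence.

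The main obstacle is the functoriality and the $\catwg{n-1}$-membership of the replacement datum $(Z,r)$: Theorem \ref{the-repl-obj-1} is stated for a \emph{given} $Z\in\catwg{n-1}$ and map $r$, and one must produce these naturally in $X$ while ensuring $Z\in\catwg{n-1}$ (not merely $\lta{n-1}$). Resolving this by the recursion $Z=Q_{n-1}\p{n}X$ is clean but forces one to verify carefully that $r=\di{n}(\,\cdot\,)$ composed appropriately still satisfies the hypothesis "$r$ is an $(n-1)$-equivalence" needed for Theorem \ref{the-repl-obj-1}(d), which follows because $Q_{n-1}\p{n}X\xrw{s_{n-1}}\p{n}X$ is an $(n-1)$-equivalence and $\za_X\up{n}:\p{n}X\rw\qn X$, while not an equivalence, enters only through $\di{n}$ so that the relevant statement is about the map $w_X$ of pullbacks, to which Proposition \ref{pro-spec-plbk-pscatwg}(b) applies with the vertical maps being $n$-equivalences. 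All remaining verifications are bookkeeping with the results already in hand.
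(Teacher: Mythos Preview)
Your overall architecture is exactly the paper's: define $P_n$ by a pullback as in Theorem~\ref{the-repl-obj-1}, then set $Q_n=\St\circ Tr_n\circ P_n$ and assemble $s_n(X)$ from the strictification comparison, $t_n$, and the pullback projection $w_X$. The problem is in your choice of the replacement datum $(Z,r)$.

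You take $Z=Q_{n-1}\p{n}X$ and $r$ the composite $Q_{n-1}\p{n}X\xrw{s_{n-1}}\p{n}X\xrw{\za_X\up{n}}\q{n}X$. But $\za_X\up{n}$ is \emph{not} an $(n-1)$-equivalence in general (already for $n=2$ the map $pX_k\rw qX_k$ from isomorphism classes to connected components is not a bijection), so $r$ is not an $(n-1)$-equivalence and Theorem~\ref{the-repl-obj-1}(d) does not apply. Your attempted repair via Proposition~\ref{pro-spec-plbk-pscatwg}(b) is circular: to exhibit $w_X$ as an induced map of pullbacks with target $X=\di{n}\q{n}X\tiund{\di{n}\q{n}X}X$, the left vertical arrow in the required diagram is precisely $\di{n}r$, and Proposition~\ref{pro-spec-plbk-pscatwg}(b) then demands that $\di{n}r$ be an $n$-equivalence, i.e.\ that $r$ be an $(n-1)$-equivalence --- which is what you could not establish. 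The same obstruction recurs levelwise when you try to show $(w_X)_k$ is an $(n-1)$-equivalence.

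The fix is to apply $Q_{n-1}$ to $\q{n}X$ rather than to $\p{n}X$: set $Z=Q_{n-1}\q{n}X$ and $r=s_{n-1}(\q{n}X)$. Then $r$ is an $(n-1)$-equivalence directly by the inductive hypothesis, Theorem~\ref{the-repl-obj-1}(d) gives that $w_X$ is an $n$-equivalence, and the levelwise statement $(w_X)_k$ follows because $(s_{n-1}(\q{n}X))_k$ is an $(n-2)$-equivalence (again by induction) so Theorem~\ref{the-repl-obj-1}(d) applies to the level-$k$ pullback square. This is exactly what the paper does; the map $\za\up{n}$ plays no role in the construction of $Q_n$.

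A minor point: the comparison $Q_nX\rw P_nX$ is not literally a composite through the pseudo-functor $Tr_n(P_nX)$ in $\funcat{n-1}{\Cat}$. One obtains it by applying the adjunction $\St\dashv J$ to the pseudo-natural transformation $t_n(P_nX):Tr_n P_nX\rw JP_nX$, and checks it is a levelwise equivalence of categories using that both the unit $\eta$ and $t_n$ are. This is how the paper phrases it, and it avoids treating a pseudo-natural transformation as a strict one.
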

\begin{proof}
By induction on $n$. When $n=2$, let $Q_2$ be the composite
\begin{equation*}
    Q_2: \tawg{2}\xrw{Tr_{2}} \segpsc{}{\Cat} \xrw{\St} \catwg{2}
\end{equation*}
where $Tr_{2}$ is as in Theorem \ref{the-XXXX}. By Theorem \ref{the-strict-funct}, $Q_2X \in \catwg{2}$. Recall \cite{Lack} that the strictification functor
\begin{equation*}
    \St:\psc{}{\Cat} \rw \funcat{}{\Cat}
\end{equation*}
is left adjoint to the inclusion
\begin{equation*}
    J: \funcat{}{\Cat} \rw \psc{}{\Cat}
\end{equation*}
and that the components of the unit are equivalences in $\psc{}{\Cat}$. By Theorem \ref{the-XXXX} there is a morphism in $\psc{}{\Cat}$
\begin{equation*}
    t_2(X):Tr_{2}X \rw JX \;.
\end{equation*}
By adjunction this corresponds to a morphism in $\funcat{}{\Cat}$
\begin{equation*}
    Q_2 X=\St Tr_{2} X\xrw{s_2(X)} X
\end{equation*}
making the following diagram commute
\begin{equation*}
    \xymatrix@C=50pt@R=50pt{
    Tr_{2}X \ar^(0.4){\eta}[r] \ar_{t_2(X)}[dr] & J\St Tr_{2}X
    \ar^{J s_2(X)}[d] \\
    &  JX
    }
\end{equation*}
Since $\eta$ and $t_2(X)$ are levelwise equivalences of categories, such is $s_2(X)$.

Suppose, inductively, that we defined $Q_{n-1}$. Define the functor
\begin{equation*}
    P_n:\tawg{n}\rw\lta{n}
\end{equation*}
as follows. Given $X\in\tawg{n}$, consider  the pullback in $\funcat{n-1}{\Cat}$
\begin{equation*}
    \xymatrix@C=70pt@R=30pt{
    P_n X \ar^{w(X)}[r] \ar[d] & X \ar^{\zg_{n}}[d] \\
    \di{n}Q_{n-1}\q{n}X \ar_{\di{n}s_{n-1}(\q{n}X)}[r] & \di{n}\q{n}X
    }
\end{equation*}
By Theorem \ref{the-repl-obj-1}, $P_n X\in\lta{n}$. Define
\begin{equation*}
Q_n X=\St Tr_{n} P_n X.
\end{equation*}
By Theorem \ref{the-strict-funct}, $Q_nX \in \catwg{n}$. Let $s_n(X):Q_n X\rw X$ be the composite
\begin{equation*}
s_n(X): Q_n X \xrw{h_n(P_n X)} P_n X \xrw{w(X)} X\;.
\end{equation*}
 where  the morphism in $\funcat{n-1}{\Cat}$
\begin{equation*}
    Q_n X= \St Tr_{n}P_n X \xrw{h_n(P_n X)} P_n X
\end{equation*}
 corrersponds by adjunction to the morphism in $\psc{n-1}{\Cat}$
\begin{equation*}
    Tr_{n}P_n X \xrw{t_n(P_n X)} J P_n X
\end{equation*}
(where $t_n(P_n X)$ is as in Theorem \ref{the-XXXX}) such that the following diagram commutes
\begin{equation*}
    \xymatrix@C=50pt@R=50pt{
    Tr_{n}P_n X \ar^(0.35){\eta}[r] \ar_{t_n(P_n X)}[dr] & J \St Tr_{n}P_n
    X= J Q_n X \ar^{J h_n(P_n X)}[d]\\
    & J P_n X
    }
\end{equation*}
We need to show that $(s_n(X))_{k}$ is an $(n-1)$-equivalence. Since $\eta$ and $t_n(P_n X)$ are levelwise equivalence of categories, such is $h_n(P_n X)$ so in particular $(h_n(P_n X))_{k}$ is a levelwise equivalence of categories, and thus is a $(n-1)$-equivalence (see Remark \ref{rem-local-equiv}).

Since pullbacks in $\funcat{n-1}{\Cat}$ are computed pointwise, there is a pullback in $\funcat{n-2}{\Cat}$
\begin{equation*}
\xymatrix@C=90pt@R=40pt{
(P_n X)_k \ar^{(w(X))_k}[r] \ar[d] & X_k \ar[d]\\
\di{n-1}(Q_{n-1}\q{n}X)_k \ar_(0.55){\di{n-1}(s_{n-1}(\q{n}X))_k}[r] & \di{n-1}\q{n-1}X_k
}
\end{equation*}
where $X_k\in\tawg{n-1}$ (since $X\in\tawg{n}$) and $(Q_{n-1}\q{n}X)_k\in\catwg{n-2}$ (since $Q_{n-1}\q{n}X\in\catwg{n-1}$) and, by induction hypothesis, $(s_{n-1}(\q{n}X))_k$ is a $(n-2)$-equivalence. It follows by Theorem \ref{the-repl-obj-1} that $(w(X))_k$ is a $(n-1)$-equivalence.

In conclusion both $(h_n(P_n X))_k$ and $(w(X))_k$ are $(n-1)$-equivalences so by Proposition \ref{pro-n-equiv} such is their composite
\begin{equation*}
    (s_n(X))_k : (Q_n X)_k \xrw{(h_n(P_n X))_k} (P_n X)_k \xrw{(w(X))_k} X_k
\end{equation*}
as required. By Lemma \ref{lem-flevel-fneq}, it follows that $s_n(X)$ is an $n$-equivalence.

\end{proof}
\begin{corollary}\label{cor-the-funct-Qn}
    The functor $Q_n:\tawg{n}\rw \catwg{n}$ induces an equivalence of categories
    \begin{equation}\label{eq-cor-the-funct-Qn}
        \tawg{n}\bsim^n\; \simeq\; \catwg{n}\bsim^n
    \end{equation}
    after localization with respect to the $n$-equivalences.
\end{corollary}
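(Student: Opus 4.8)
The plan is to show that the rigidification functor $Q_n$ and the inclusion $i:\catwg{n}\hookrightarrow\tawg{n}$ of Remark \ref{rem-wg-ps-cat} descend to mutually inverse equivalences between the localizations.

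First I would check that $i$ preserves $n$-equivalences, so that it induces a functor $\bar i:\catwg{n}\bsim^n\rw\tawg{n}\bsim^n$. Comparing Definition \ref{def-n-equiv} with Definition \ref{def-wg-ps-cat}, for an object $X\in\catwg{n}$ the objects $X(a,b)$ and the functor $\p{n}$ are given by the same formulas in both settings; hence, by an immediate induction on $n$, a morphism of $\catwg{n}$ is an $n$-equivalence in $\catwg{n}$ if and only if it is an $n$-equivalence in $\tawg{n}$. In particular $i$ reflects as well as preserves $n$-equivalences.

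Next I would show that $Q_n$ sends $n$-equivalences to $n$-equivalences, so that it induces a functor $\bar Q_n:\tawg{n}\bsim^n\rw\catwg{n}\bsim^n$. Given an $n$-equivalence $f:X\rw Y$ in $\tawg{n}$, naturality of the morphism $s_n$ of Theorem \ref{the-funct-Qn} yields a commuting square
\begin{equation*}
\xymatrix@C=40pt@R=30pt{
Q_n X \ar^{Q_n f}[r] \ar_{s_n(X)}[d] & Q_n Y \ar^{s_n(Y)}[d]\\
X \ar_{f}[r] & Y
}
\end{equation*}
in which $s_n(X)$, $s_n(Y)$ and $f$ are $n$-equivalences in $\tawg{n}$. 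By Proposition \ref{pro-n-equiv} d) the composite $f\circ s_n(X)=s_n(Y)\circ Q_n f$ is an $n$-equivalence, and then Proposition \ref{pro-n-equiv} c) applied with $h=s_n(Y)$ forces $Q_n f$ to be an $n$-equivalence in $\tawg{n}$; since $Q_n f$ is a morphism of $\catwg{n}$, it is an $n$-equivalence in $\catwg{n}$ by the previous paragraph.

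Finally I would use $s_n$ to produce the two natural isomorphisms after localization. The components of $s_n$ form a natural transformation $i\,Q_n\Rw\Id_{\tawg{n}}$ with components $n$-equivalences, hence an isomorphism $\bar i\,\bar Q_n\cong\Id$ in $\tawg{n}\bsim^n$. For the other composite, note that for $X\in\catwg{n}$ Theorem \ref{the-funct-Qn} gives $Q_n X\in\catwg{n}$, and $s_n(X):Q_n X\rw X$, being an $n$-equivalence in $\tawg{n}$ between objects of $\catwg{n}$, is an $n$-equivalence in $\catwg{n}$; thus the restriction of $s_n$ along $i$ is a natural transformation $Q_n\,i\Rw\Id_{\catwg{n}}$ with $n$-equivalence components, giving an isomorphism $\bar Q_n\,\bar i\cong\Id$ in $\catwg{n}\bsim^n$. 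Therefore $\bar Q_n$ is an equivalence of categories, which is \eqref{eq-cor-the-funct-Qn}. The only point requiring care is the identification of the two notions of $n$-equivalence on $\catwg{n}$; granting that, the statement is a formal consequence of the existence of the natural $n$-equivalence $s_n$ together with the two-out-of-three properties in Proposition \ref{pro-n-equiv}, so I expect no serious obstacle.
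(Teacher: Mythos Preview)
Your proof is correct and follows essentially the same approach as the paper: both use the inclusion $i$ and the natural $n$-equivalence $s_n$ from Theorem \ref{the-funct-Qn} to exhibit $Q_n$ and $i$ as mutually inverse after localization. You are more careful than the paper in explicitly verifying that $i$ and $Q_n$ preserve $n$-equivalences (the paper tacitly assumes this), and your use of Proposition \ref{pro-n-equiv} c) and d) for the two-out-of-three argument is the right way to fill that gap.
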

\begin{proof}
Let $i:\catwg{n}\hookrightarrow \tawg{n}$ denote the embedding of $\catwg{n}$ into $\tawg{n}$. Given $X\in\tawg{n}$, by Theorem \ref{the-funct-Qn} there is an $n$-equivalence in $\tawg{n}$ $i Q_n X\rw X$, therefore $i Q_n X\cong X$ in $\tawg{n}\bsim^n$.

Let $Y\in\catwg{n}$; then $i Y\in\tawg{n}$ so by Theorem \ref{the-funct-Qn} there is an $n$-equivalence in $\tawg{n}$ $i Q_n i Y \rw i Y$. Since $i$ is fully faithful, $ Q_n i Y \rw Y$ is an $n$-equivalence in $\catwg{n}$. It follows that $Q_n i Y \cong Y$ in $\catwg{n}\bsim^n$.

In conclusion $Q_n$ and $i$ induce the equivalence of categories \eqref{eq-cor-the-funct-Qn}.
\end{proof}


\end{document}